\def\x{\mathbf{x}}
\def\e{\mathbf{e}}
\def\v{\mathbf{v}}
\def\w{\mathbf{w}}
\def\y{\mathbf{y}}
\def\z{\mathbf{z}}
\def\a{\mathbf{a}}
\def\b{\mathbf{b}}
\def\c{\mathbf{c}}
\def\u{\mathbf{u}}
\def\v{\mathbf{v}}
\def\0{\mathbf{0}}
\def\R{\mathbb{R}}
\def\X{\mathcal{X}}
\def\A{\mathcal{A}}
\def\Y{\mathcal{Y}}
\theoremstyle{thmstyleone}%
\newtheorem{theorem}{Theorem}[section]
\newtheorem{proposition}{Proposition}[section]%
\newtheorem{assumption}{Assumption}[section]%
\theoremstyle{thmstyletwo}%
\newtheorem{example}{Example}[section]%
\newtheorem{remark}{Remark}[section]%
\theoremstyle{thmstylethree}%
\newtheorem{definition}{Definition}[section]%
\numberwithin{equation}{section}
\begin{document}
\bibliographystyle{plainnat}
\title{Nonsmooth convex-concave saddle point problems with cardinality penalties}

\author{Wei Bian\footnote{\footnotesize School of Mathematics, Harbin Institute of Technology, Harbin, China. {Email: \texttt{bianweilvse520@163.com}}. The research of this author
is partially supported by National Natural Science Foundation of China grants (12271127, 62176073), National Key Research and Development
Program of China (2021YFA1003500) and Fundamental Research Funds
for Central Universities (2022FRFK060017).}
 \hspace{0.05in}   and \hspace{0.05in}   Xiaojun Chen\footnote{\footnotesize Department of Applied Mathematics, Hong Kong Polytechnic University,
    Hong Kong, China. {Email: \texttt{maxjchen@polyu.edu.hk}}. The research of this author
is partially supported by Hong Kong Research Grant Council project PolyU15300022. Corresponding author.\\  {\bf Submitted  22 October, 2023; revised 24 March, 2024}}
        }
%
%
%
%
%
%
%
%

\abstract{In this paper, we focus on a class of convexly constrained nonsmooth convex-concave saddle point problems with cardinality penalties. Although such nonsmooth nonconvex-nonconcave and discontinuous min-max problems may not have a saddle point, we show that they have a local saddle point and a global minimax point, and  some local saddle points have the lower bound properties. We define a class of strong local saddle points  based on the lower bound properties
for stability of variable selection. Moreover we give a framework to construct continuous relaxations of the discontinuous min-max problems based on the convolution, such that they have the same saddle points with the original problem. We also establish  the relations between the continuous relaxation problems and the original problems regarding local saddle points, global minimax points, local minimax points and stationary points.
Finally, we illustrate our results with distributionally robust sparse convex regression, sparse robust bond portfolio construction and sparse convex-concave logistic regression saddle point problems.}


\keywords{nonsmooth min-max problem, nonconvex-nonconcave, local saddle point, sparse optimization,  cardinality functions, smoothing method}


\pacs[MSC Classification]{90C46, 49K35, 90C30, 65K05}

\maketitle

\vspace{-0.15in}
\section{Introduction}\label{sec:intro}
Let $c:\R^n\times\R^m\rightarrow\R$ be a Lipschitz continuous function
with $c(\x,\y)$ {convex} in $\x\in \R^n$ for $\y\in\R^m$ and {concave} in $\y\in \R^m$ for $\x\in\R^n$, $g:\R^n\rightarrow \R^{\hat{n}}$ and $h:\R^m\rightarrow \R^{\hat{m}}$ be continuously differentiable functions. For a vector $\a\in\R^k$, $\|\a_+\|_0$ is
the cardinality function for the positive elements in $\a$, that is, $\|\a_+\|_0=\|\max\{\a,0\}\|_0=\sum_{i=1}^k(\max\{\a_i,0\})^0$ with $0^0=0$.
In this paper, we consider the saddle point problems with cardinality penalties in the following form:
\begin{equation}\label{obb}
\min_{\x\in\X}\max_{\y\in\Y} \,f(\x,\y):=c(\x,\y)+\lambda_1\|g(\x)_+\|_0
-\lambda_2\|h(\y)_+\|_0,
\end{equation}
where the feasible sets $\X\subset \R^n$ and $\Y\subset \R^m$ are convex and compact,
and the penalty parameters $\lambda_1, \lambda_2\in \R$ are positive.

In the last few years, the min-max problems have been found many interesting applications in machine learning and data science, especially the generative adversarial network (GAN) \cite{Goodfellow-2014,Goodfellow-2016,Jiang-Chen} and adversarial training \cite{Bai}. Problem \eqref{obb} is a discontinuous and  nonconvex-nonconcave min-max problem, i.e. $f$ is discontinuous in $\X \times \Y$,  $f(\cdot,\y)$ is not convex for some fixed $\y\in\Y$ and $f(\x,\cdot)$ is not concave for some fixed $\x\in\X$.
A special case of \eqref{obb} is
\begin{equation}\label{obb-2}
\min_{\x\in\X}\max_{\y\in\Y} \,c(\x,\y)+\lambda_1\|(\x-\underline{\a})_+\|_0+
\lambda_1\|(\overline{\a}-\x)_+\|_0
-\lambda_2\|(\y-\underline{\b})_+\|_0-\lambda_2\|(\overline{\b}-\y)_+\|_0,
\end{equation}
where $\underline{\a}$, $\overline{\a}\in\R^n$ and $\underline{\b}$, $\overline{\b}\in\R^m$. In particular, if $\underline{\a}=\overline{\a}=\underline{\b}=\overline{\b}=\0$, then \eqref{obb-2}
reduces to the {convex-concave} saddle point problem with $\ell_0$ penalties as follows
\begin{equation}\label{obb-4}
\min_{\x\in\X}\max_{\y\in\Y} \, c(\x,\y)+
\lambda_1\|\x\|_0-\lambda_2\|\y\|_0.
\end{equation}

In 1928, von Neumann \cite{von-Neumann-1928} proved that when $c$ is a bilinear function, and $\X$, $\Y$ are two finite dimensional simplices,
\begin{equation}\label{obb-c}
\min_{\x\in\X}\max_{\y\in\Y}\,c(\x,\y)
\end{equation}
has a saddle point and it holds
\begin{equation}\label{eq-cc}
\min_{\x\in\X}\max_{\y\in\Y}\,c(\x,\y)=\max_{\y\in\Y}\min_{\x\in\X}\,c(\x,\y).
\end{equation}
This pioneering
work has inspired a number of seminal contributions in the existence theory of saddle points of min-max problems in economics and engineering \cite{Pang-book2,Rock-Wets,Nikaido,Sion,Rock,Debreu}. In 1949, Shiffman \cite{Shiffman} gave a new proof of von Neumann's minimax theorem with a generalization to convex-concave functions. In 1952, Fan \cite{Fan1952} established the minimax theory for the convex-concave function that is semicontinuous in one of the two variables.
Based on Brouwer's fixed point theorem, Nikaido \cite{Nikaido} proved \eqref{eq-cc}
for the continuous and quasi-convex-concave function $c$. Here, we call $c$ is quasi-convex-concave if $c$ is quasi-convex
in $\x\in \R^n$ for $\y\in\R^m$ and {quasi-concave} in $\y\in \R^m$ for $\x\in\R^n$.
In 1958,
Sion \cite{Sion} generalized von Neumann's result, and showed that if $c$ is quasi-convex-concave and lower semicontinuous-upper semicontinuous, then \eqref{obb-c} has a nonempty saddle point set whose closedness and convexity were pointed out  in \cite{Rock}.
Moreover, from \cite[Theorem 1.4.1]{Pang-book2} in the book by Facchinei and Pang, we know that
 \begin{equation}\label{gap}
\min_{\x\in\X}\max_{\y\in\Y}\,f(\x,\y)=\max_{\y\in\Y}\min_{\x\in\X}\,f(\x,\y)
\end{equation}
 is a necessary and sufficient condition for the existence of a saddle point of $f$ over $\X\times \Y$.

When $f$ is not convex-concave, \eqref{gap} fails in general.
The concept of a local saddle point is defined by considering (\ref{gap}) locally
at a point in $\X\times \Y$.
However, a local saddle point also may not exist for nonconvex-nonconcave min-max problems in machine learning. In \cite{Jin-Netrapalli-Jordan}, Jin, Netrapalli and Jordan  gave the definitions of a global minimax point and a local minimax point  by considering the min-max problem as a two-player sequential game. Necessary and sufficient conditions for a local minimax points of min-max problems were studied in \cite{Jin-Netrapalli-Jordan,Dai,Guo,Jiang-Chen}.
Most recently, Chen and Kelley \cite{Chen-Kelley} showed that a min-max problem for robust linear least squares problems does
not have a saddle point, a local saddle point and a local minimax point, while it has infinitely first order
stationary points and finite global minimax points. However, the set of first order stationary points and the set of global minimax points do not have a common point.

The cardinality functions in problem (\ref{obb}) play important roles to ensure the sparsity of the desirable solutions and improve the estimation accuracy by selecting important feature parameters. In the last decades, sparse minimization models with cardinality regularization terms have been widely used for sparse signal recovery, sparse variable selection, compressed sensing and statistical learning
\cite{Beck2018-MP,BC-SINUM,Bruckstein,Cui-Pang-2022,Soubies2017}.
Advanced mathematical and statistical theory and efficient algorithms have been developed for sparse minimization \cite{BC-SINUM,Candes1,Chen-Xu-Ye,Jiao2015}.
Recently, He et al. \cite{CuiJS} systematically compared solutions of a special quadratic minimization problem with $\ell_0$ penalty, $\ell_1$ penalty and capped-$\ell_1$ penalty.
However, mathematical theory and numerical algorithms for sparse min-max problems with cardinality penalties have not been systematically studied. Moreover,
the existence of solutions and continuous relaxation methods to \eqref{obb} have not been carefully studied.

Approximating cardinality functions in optimization problems by continuous or smooth functions is a promising approach.
Many continuous relaxations to the cardinality function have been brought forward, such as the $\ell_1$ \cite{Candes1}, SCAD \cite{Fan1997}, hard thresholding \cite{Fan1997}, $\ell_p$ ($0<p<1$) \cite{Chen-Xu-Ye}, MCP \cite{Zhang2011}, capped-$\ell_1$ \cite{Peleg2008}, CEL0 \cite{Soubies2017}, etc.  In this paper, we
construct continuous approximations to the cardinality functions in \eqref{obb} based on the convolution \cite{Chen-Mangasarian,chen-MP}, which include most relaxation functions to the cardinality function.

The main contributions of this paper have four parts.
\begin{itemize}
\item We prove the existence of a local saddle point and a global minimax point of \eqref{obb}, and define a class of strong local saddle points that have some desirable sparse properties.
\item Based on the convolution, we introduce two classes of density functions to provide a unified method for constructing the continuous relaxations with different smoothness to the cardinality function,  which induce some popular continuous penalties in sparse optimization. Moreover, we propose the continuous relaxation problem of \eqref{obb}, which has both the local saddle points and global minimax points.
\item We establish the relations between \eqref{obb} and its continuous relaxations regarding the saddle points, local saddle points, local minimax points and global minimax points.
    Moreover, we define the first order and second order stationary points of the continuous relaxation problem, which not only correspond to the strong local saddle points of \eqref{obb}, but also have  promising computational tractability.
\item We  show the gradient consistency of a class of smoothing convex-concave functions of nonsmooth functions $c$.  Moreover we prove that any accumulation point of weak d(irectional)-stationary points of the smoothing relaxation problem  is a weak d-stationary point of the nonsmooth relaxation problem  as the smoothing parameter  goes to zero.
\end{itemize}

The rest of this paper is organized as follows. In Section \ref{section2}, we prove the existence of local saddle points and  minimax points of (\ref{obb}). In Section \ref{section3}, we construct the continuous relaxations to \eqref{obb}. In Section \ref{section4}, we establish the relation between (\ref{obb}) and its continuous relaxation problems. The smoothing functions of nonsmooth function  $c$ are studied at the end of this section. In Section \ref{section5}, we study the first order and second order stationary points of the continuous relaxation problems for a particular class of (\ref{obb}) and their relation with the
strong local saddle points of (\ref{obb}). In Section \ref{section6}, we show the applications of problem \eqref{obb}.

{\bf Notation}
Let $\mathbb{R}_+=[0,+\infty)$, $\mathbb{R}_{++}=(0,+\infty)$ and $[n]=\{1,2,\ldots,n\}$ for a positive integer $n$. For a matrix ${\bf A}\in\R^{n\times m}$, ${\bf A}_{ij}$ means the element of ${\bf A}$ at the $i$th row and $j$th column.
Let $\e_i$ be the vector with $1$ at the $i$th element and $0$ for the others and $\e$ be the vector with $1$ for all elements.
For a Lipschitz continuous function $c:\mathbb{R}^n\times\mathbb{R}^m\rightarrow\mathbb{R}$, $\partial_{\x}c(\bar{\x},\bar{\y})$ and $\partial_{\y}c(\bar{\x},\bar{\y})$ mean the Clarke subgradients of $c$ with respect to $\x$ and $\y$ at point $(\bar{\x},\bar{\y})$, respectively. {When $c$ is Lipschitz continuously differentiable, $\partial^2c(\bar{\x},\bar{\y})$ means the Clarke generalized Hessian of $c$ at point $(\bar{\x},\bar{\y})$, $\partial_{\x\x}^2c(\bar{\x},\bar{\y})$ and $\partial_{\y\y}^2c(\bar{\x},\bar{\y})$ mean the Clarke generalized Hessian of $c(\x,\bar{\y})$ with respect to $\x$ at point $\bar{\x}$ and $c(\bar{\x},{\y})$ with respect to $\y$ at point $\bar{\y}$, respectively. }
For $\x\in\mathbb{R}^n$ and $\delta>0$, $\mathbf{B}(\x,\delta)$ means the closed ball centered at $\x$ with radius $\delta$,
$\mathcal{A}^+(\x)=\{i\in[\hat{n}]:g_i(\x)>0\}$,
$\mathcal{A}^0(\x)=\{i\in[\hat{n}]:g_i(\x)=0\}$
 and
$\mathcal{A}^+_{\delta}(\x)=\{i\in[\hat{n}]:0<g_i(\x)<\delta\}$.
Similarly, denote
$\mathcal{B}^+(\y)=\{j\in[\hat{m}]:h_j(\y)>0\}$,
$\mathcal{B}^0(\y)=\{j\in[\hat{m}]:h_j(\y)=0\}$ and
$\mathcal{B}^+_{\delta}(\y)=\{j\in[\hat{m}]:0<h_j(\y)<\delta\}$.
For a set $S\subseteq\mathbb{R}^n$ and $i\in[n]$,
$S_i=\{\x_i:\x\in S\}$, $\|S\|_{\infty}=\sup\{\|\x\|_{\infty}:\x\in S\}$ and $\mbox{co}\{S\}=\{\lambda \x^1+(1-\lambda)\x^2:\x^1,\x^2\in S,\lambda\in[0,1]\}$. For sets $S,\bar{S}\subseteq\mathbb{R}^n$, $S+\bar{S}=\{\x^1+\x^2:\x^1\in S,\x^2\in \bar{S}\}$. For a closed convex subset $\Omega\subseteq\R^n$ and $\x\in\Omega$, $\mathcal{N}_{\Omega}(\x)$ means the normal cone to $\Omega$ at $\x$.

\section{Existence of local saddle points of problem \eqref{obb}}\label{section2}
In this section, we prove the existence of local saddle points and  minimax points of  problem \eqref{obb}.  We also define a class of strong local saddle points of \eqref{obb} and provide its relation with saddle points of problem (\ref{obb-c}) in  a certain subset of $\X\times \Y$. {First of all, we give some necessary definitions.}
\begin{definition}\label{defn1}
A point $(\x^*,\y^*)\in\X\times\Y$ is called a \textbf{saddle point} of problem \eqref{obb}, if for all $(\x,\y)\in\X\times\Y$, it holds
\begin{equation}\label{eq-N}
f(\x^*,\y)\leq f(\x^*,\y^*)\leq f(\x,\y^*).
\end{equation}
We call $(\x^*,\y^*)\in\X\times\Y$ a \textbf{local saddle point} of problem \eqref{obb}, if there exists a $\delta>0$ such that \eqref{eq-N} holds for all $\x\in\X\cap\mathbf{B}(\x^*,\delta)$ and $\y\in\Y\cap\mathbf{B}(\y^*,\delta)$.
\end{definition}

\begin{definition}\label{defn-min}
A point $(\x^*,\y^*)\in\X\times\Y$ is called a \textbf{{global minimax point}} of \eqref{obb}, if  for all $(\x,\y)\in\X\times\Y$, we have
$$
f(\x^*,\y)\leq f(\x^*,\y^*)\leq\max_{\y'\in \Y} f(\x,\y').$$
We call $(\x^*,\y^*)\in\X\times\Y$ a \textbf{{local minimax point}} of \eqref{obb}, if there exist a $\delta_0>0$ and a function $\pi:\R_+\rightarrow\R_+$ satisfying $\pi(\delta)\rightarrow0$ as $\delta\rightarrow0$ such that for any $\delta\in(0,\delta_0]$, $\x\in\X\cap\mathbf{B}(\x^*,\delta)$ and $\y\in\Y\cap\mathbf{B}(\y^*,\delta)$, it holds
$$
f(\x^*,\y)\leq f(\x^*,\y^*)\leq\max_{\y'\in\Y\cap\mathbf{B}(\y^*,\pi(\delta))}f(\x,\y').$$
 \end{definition}

It is easy to see that a local saddle point is a local minimax point. But a global minimax point is not necessarily a local minimax point.
The two inequalities in \eqref{eq-N} for $\x\in\X$ and $\y\in\Y$ can be equivalently expressed by
\begin{equation}\label{eq2.2}
\x^*\in\arg\min_{\x\in\mathcal{X}}f(\x,\y^*)\quad\mbox{and}
\quad\y^*\in\arg\max_{\y\in\mathcal{Y}}f(\x^*,\y),
\end{equation}
respectively.
Since $\X$ and $\Y$ are compact, the lower semicontinuity of $((\cdot)_+)^0$ guarantees the existence of the solutions to the two optimization problems in \eqref{eq2.2}, but $(\x^*,\y^*)$ may not be able to solve both the minimization and maximization simultaneously.
{This means that the saddle point set of problem (\ref{obb})
may be empty and
$\min_{\x\in\X}\max_{\y\in\Y}\,f(\x,\y)\neq\max_{\y\in\Y}\min_{\x\in\X}\,f(\x,\y)$ (see Example \ref{example-s}).

Note that a nonconvex-nonconcave function may not have a saddle point, a local saddle point, or even a local minimax point. Fortunately, we can prove the existence of global minimax points, local saddle points and local minimax points of problem \eqref{obb} without any other assumption.
\begin{proposition}\label{prop12}
Min-max problem \eqref{obb} always has a global minimax point.
\end{proposition}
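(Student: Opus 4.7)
The plan is to recast existence as a one-sided minimization problem. Introduce the marginal function
\[
\phi(\x) := \max_{\y \in \Y} f(\x, \y),
\]
and take any minimizer $\x^*$ of $\phi$ over $\X$ together with any $\y^* \in \arg\max_{\y \in \Y} f(\x^*, \y)$. The two inequalities required by Definition \ref{defn-min} follow immediately, since
\[
f(\x^*, \y) \le f(\x^*, \y^*) = \phi(\x^*) \le \phi(\x) = \max_{\y' \in \Y} f(\x, \y')
\]
for all $\x \in \X$ and $\y \in \Y$. So the real content is showing that the two optimization problems defining $\x^*$ and $\y^*$ admit solutions.

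The first step is to verify that the inner maximum exists for each fixed $\x \in \X$. The scalar function $t \mapsto (t)_+^0$ is lower semicontinuous on $\R$, so composing with the continuous map $h$ and summing shows that $\|h(\y)_+\|_0$ is lower semicontinuous in $\y$; hence $-\lambda_2 \|h(\y)_+\|_0$ is upper semicontinuous. Since $c(\x, \cdot)$ is continuous and $\lambda_1 \|g(\x)_+\|_0$ is constant in $\y$, the section $f(\x, \cdot)$ is upper semicontinuous on the compact set $\Y$, so its maximum is attained and $\phi(\x)$ is finite.

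The second step is to show that $\phi$ is lower semicontinuous on $\X$. By the same observation, $\|g(\x)_+\|_0$ is lower semicontinuous in $\x$, so $\x \mapsto f(\x, \y)$ is lower semicontinuous for each fixed $\y \in \Y$. The pointwise supremum of a family of lower semicontinuous functions is lower semicontinuous, so $\phi$ inherits this property, and compactness of $\X$ yields a minimizer $\x^*$ by Weierstrass. Pairing $\x^*$ with $\y^*$ chosen as above completes the argument.

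There is no deep obstacle here; the whole argument rests on matching the signs of the two cardinality penalties with the correct semicontinuity in each variable. The positive coefficient $\lambda_1$ preserves lower semicontinuity of $f(\cdot, \y)$, while the negative coefficient $-\lambda_2$ converts the cardinality term in $\y$ into an upper semicontinuous one, which is exactly the asymmetric pairing needed to apply Weierstrass in both the outer and inner optimizations. Notably, this scheme does not automatically deliver a local minimax point, since it produces no neighborhood-modulus $\pi$ in the sense of Definition \ref{defn-min}, which is consistent with the proposition claiming only global minimax existence.
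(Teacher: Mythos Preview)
Your proposal is correct and follows essentially the same approach as the paper: define the marginal function $\phi(\x)=\max_{\y\in\Y}f(\x,\y)$, use lower semicontinuity of $f(\cdot,\y)$ to get lower semicontinuity of $\phi$ and hence a minimizer $\x^*$ on the compact set $\X$, then use upper semicontinuity of $f(\x^*,\cdot)$ to extract $\y^*$. You spell out the semicontinuity verifications in slightly more detail than the paper does, but the argument is the same.
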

\begin{proof}
By the compactness of $\Y$, we can define
$
p(\x)=\max_{\y\in\Y}f(\x,\y).$
Since function $f(\cdot,\y)$ in \eqref{obb} is lower semicontinuous for any fixed $\y\in\R^m$ and $\X$ is compact, $p$
is lower semicontinuous on $\X$.
Then, there exists a global solution to $\min_{\x\in\X}p(\x)$, denoted by $\x^*$, i.e.
\begin{equation}\label{eq-e9}
\max_{\y'\in\Y}f(\x^*,\y')\leq \max_{\y'\in\Y}f(\x,\y'), \quad\forall\, \x\in\X.
\end{equation}
The upper semicontinuity of $f(\x^*,\cdot)$ and the compactness of $\Y$ ensure the existence of the solution to $\max_{\y'\in\Y}f(\x^*,\y')$, denoted by $\y^*$,
which implies
\begin{equation}\label{eq-e10}
f(\x^*,\y^*)=\max_{\y'\in\Y}f(\x^*,\y')\geq f(\x^*,\y), \quad\forall \y\in\Y.
\end{equation}
Therefore, \eqref{eq-e9} together with \eqref{eq-e10} implies that
$(\x^*,\y^*)$ is a global minimax point of \eqref{obb}.
\end{proof}

\begin{proposition}\label{prop4}
Any saddle point of \eqref{obb-c} is a local saddle point of \eqref{obb}.
\end{proposition}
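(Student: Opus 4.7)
The plan is to unpack what a saddle point of (\ref{obb-c}) gives us globally, then show that the cardinality terms behave monotonically in a small neighborhood, so that adding them back in preserves the saddle inequalities locally. Concretely, let $(\x^*,\y^*)$ be a saddle point of (\ref{obb-c}), so that $c(\x^*,\y)\leq c(\x^*,\y^*)\leq c(\x,\y^*)$ for every $(\x,\y)\in\X\times\Y$. I need to promote these to the saddle-point inequalities (\ref{eq-N}) for $f$ restricted to a neighborhood of $(\x^*,\y^*)$.

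The key observation I would exploit is that the scalar step $t\mapsto (t_+)^0$ is lower semicontinuous on $\R$, since it equals $0$ for $t\leq0$ and $1$ for $t>0$. Combined with continuity of $g$ and $h$, for every index $i\in\mathcal{A}^+(\x^*)$ (respectively $j\in\mathcal{B}^+(\y^*)$) the inequality $g_i(\cdot)>0$ (respectively $h_j(\cdot)>0$) persists on some open neighborhood. Intersecting these finitely many neighborhoods yields a single $\delta>0$ such that
\begin{equation*}
\|g(\x)_+\|_0\geq \|g(\x^*)_+\|_0\quad\text{and}\quad \|h(\y)_+\|_0\geq \|h(\y^*)_+\|_0
\end{equation*}
for all $\x\in\mathbf{B}(\x^*,\delta)$ and $\y\in\mathbf{B}(\y^*,\delta)$. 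The indices not in $\mathcal{A}^+(\x^*)$ (resp.\ $\mathcal{B}^+(\y^*)$) contribute $0$ to $\|g(\x^*)_+\|_0$ (resp.\ $\|h(\y^*)_+\|_0$) and hence cannot lower the count.

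With this $\delta$ chosen, the two local inequalities follow by direct addition. For $\y\in\Y\cap\mathbf{B}(\y^*,\delta)$, combining $c(\x^*,\y)\leq c(\x^*,\y^*)$ with $-\lambda_2\|h(\y)_+\|_0\leq -\lambda_2\|h(\y^*)_+\|_0$ (since $\lambda_2>0$) and noting that the $\lambda_1\|g(\x^*)_+\|_0$ terms coincide, gives $f(\x^*,\y)\leq f(\x^*,\y^*)$. Symmetrically, for $\x\in\X\cap\mathbf{B}(\x^*,\delta)$, combining $c(\x^*,\y^*)\leq c(\x,\y^*)$ with $\lambda_1\|g(\x^*)_+\|_0\leq\lambda_1\|g(\x)_+\|_0$ and the common $-\lambda_2\|h(\y^*)_+\|_0$ term yields $f(\x^*,\y^*)\leq f(\x,\y^*)$. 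These are exactly the conditions in Definition \ref{defn1}, so $(\x^*,\y^*)$ is a local saddle point of (\ref{obb}).

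There is no genuinely hard step here; the only subtlety worth flagging is that the argument uses the correct sign pairing, namely that the cardinality term $+\lambda_1\|g(\cdot)_+\|_0$ is lower semicontinuous with respect to minimization in $\x$, while $-\lambda_2\|h(\cdot)_+\|_0$ is upper semicontinuous with respect to maximization in $\y$; both push the inequalities in the favorable direction near $(\x^*,\y^*)$. This is precisely the place where the sparsity structure of (\ref{obb}) cooperates with a convex–concave saddle point of the smooth part $c$, and it is what allows the conclusion to hold without any additional regularity assumption on $g$ or $h$ beyond continuous differentiability.
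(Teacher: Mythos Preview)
Your proof is correct and follows essentially the same approach as the paper: both arguments use continuity of $g$ and $h$ to ensure that indices in $\mathcal{A}^+(\x^*)$ and $\mathcal{B}^+(\y^*)$ remain positive in a neighborhood, giving $\|g(\x)_+\|_0\geq\|g(\x^*)_+\|_0$ and $\|h(\y)_+\|_0\geq\|h(\y^*)_+\|_0$ locally, and then combine these monotonicity relations with the saddle inequalities for $c$ to obtain the local saddle inequalities for $f$. Your presentation is slightly cleaner in that you start directly from an arbitrary saddle point of \eqref{obb-c} rather than invoking Sion's theorem for existence, which is appropriate since the statement is about any saddle point.
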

\begin{proof}
By Sion's minimax  theorem \cite{Sion},   \eqref{obb-c}  has a saddle point $(\x^*,\y^*)\in\X\times\Y$ such that
\begin{equation}\label{eq-17}
c(\x^*,\y)\leq c(\x^*,\y^*)\leq c(\x,\y^*),\quad\forall\, \x\in\X,\,\y\in\Y.
\end{equation}

By the continuity of functions $g_i$ and $h_j$, there exists a $\delta>0$ such that
\begin{eqnarray*}
&g_i(\x)>0,\quad\forall i\in\mathcal{A}^+(\x^*),\,\x\in\mathbf{B}(\x^*,\delta)\cap\X,\label{eq-e7}\\
&h_j(\y)>0,\quad\forall j\in\mathcal{B}^+(\y^*),\,\y\in\mathbf{B}(\y^*,\delta)\cap\Y,\label{eq-e8}
\end{eqnarray*}
which implies $\mathcal{A}^+(\x^*)\subseteq\mathcal{A}^+(\x)$ and $\mathcal{B}^+(\y^*)\subseteq\mathcal{B}^+(\y)$ in a neighborhood of $(\x^*,\y^*)$.
On the one hand,
$${c}(\x^*,\y^*)=f(\x^*,\y^*)-\lambda_1\sum_{i\in\mathcal{A}^+(\x^*)}1
+\lambda_2\sum_{j\in\mathcal{B}^+(\y^*)}1.$$
On the other hand, for $\x\in \mathbf{B}(\x^*,\delta)\cap\X$ and
$\y\in \mathbf{B}(\y^*,\delta)\cap\Y$, it has
$$
c(\x,\y^*)=f(\x,\y^*)-\lambda_1\sum_{i\in\mathcal{A}^+(\x)}1
+\lambda_2\sum_{j\in\mathcal{B}^+(\y^*)}1
\leq f(\x,\y^*)-\lambda_1\sum_{i\in\mathcal{A}^+(\x^*)}1
+\lambda_2\sum_{j\in\mathcal{B}^+(\y^*)}1,
$$
and
$$
c(\x^*,\y)=f(\x^*,\y)-\lambda_1\sum_{i\in\mathcal{A}^+(\x^*)}1
+\lambda_2\sum_{j\in\mathcal{B}^+(\y)}1
\geq f(\x^*,\y)-\lambda_1\sum_{i\in\mathcal{A}^+(\x^*)}1
+\lambda_2\sum_{j\in\mathcal{B}^+(\y^*)}1.
$$
Thus, we can conclude that
$$f(\x^*,\y)\leq f(\x^*,\y^*)\leq f(\x,\y^*),\quad \forall\, \x\in \mathbf{B}(\x^*,\delta)\cap\X,\,
\y\in \mathbf{B}(\y^*,\delta)\cap\Y,$$
which implies that $(\x^*,\y^*)$ is a local saddle point of \eqref{obb}.
\end{proof}

Since (\ref{obb-c}) has a saddle point, Proposition \ref{prop4} ensures the existence of a local saddle point of problem \eqref{obb}.
Moreover,  by \cite{Sion}, \eqref{obb} also has a local saddle point if $c$ is a quasi-convex-concave function.

The following example shows that the parameters $\lambda_1$ and $\lambda_2$ play an important role for the relations between saddle points, local saddle points, global minimax points and local minimax points of  min-max problem \eqref{obb}.
\begin{example}\label{example-s}
Consider the following min-max problem
\begin{equation}\label{example1}
\min_{\x\in\X}\max_{\y\in\Y}f(\x,\y):=(\x-1)(\y-1)+\lambda_1\|\x\|_0-\lambda_2\|\y\|_0,
\end{equation}
where $\X=\Y=[-2,2]$ and $\lambda_1,\lambda_2>0$. It is clear that
$c(\x,\y)=(\x-1)(\y-1)$ is convex-concave on $\X\times\Y$ and $(1,1)$ is the unique saddle point of $\min_{\x\in\X}\max_{\y\in\Y}c(\x,\y)$.

Case 1 (has no saddle point): Let $\lambda_1=3$ and $\lambda_2=1$.
By simple calculation, we find
$$p(\x)=\max_{\y\in\Y}f(\x,\y)=\left\{\begin{aligned}
&\max\{5-3\x,\x+1,4-\x\}&&\mbox{\emph{if} $\x\neq0$}\\
&2&&\mbox{\emph{if} $\x=0$}
\end{aligned}\right.$$
and
$$q(\y)=\min_{\x\in\X}f(\x,\y)=\left\{\begin{aligned}
&\min\{-3\y+5,-\y,\y+1\}&&\mbox{\emph{if} $\y\neq0$}\\
&1&&\mbox{\emph{if} $\y=0$}.
\end{aligned}\right.$$
Hence, we have $\min_{\x\in\X}\max_{\y\in\Y}f(\x,\y)=f(0,-2)=2$ and
$\max_{\y\in\Y}\min_{\x\in\X}f(0,0)=1$.
Thus, \eqref{gap} fails in this case. By {\cite[Theorem 1.4.1]{Pang-book2}}, we can conclude that there is no saddle point to problem \eqref{example1} with $\lambda_1=3$ and $\lambda_2=1$. On the other hand, it has four local saddle points:
$(0,0)$, $(1,1)$, $(0,-2)$ and $(2,0)$.

Case 2 (has a saddle point): Let $\lambda_1=\lambda_2=3$.
The similar calculation gives
$$p(\x)=\max_{\y\in\Y}f(\x,\y)=\left\{\begin{aligned}
&\max\{\x-1,4-\x,-3\x+3\}&&\mbox{\emph{if} $\x\neq0$}\\
&1&&\mbox{\emph{if} $\x=0$}
\end{aligned}\right.$$
and
$$q(\y)=\min_{\x\in\X}f(\x,\y)=\left\{\begin{aligned}
&\min\{-2-\y,3-3\y,\y-1\}&&\mbox{\emph{if} $\y\neq0$}\\
&1&&\mbox{\emph{if} $\y=0$}.
\end{aligned}\right.$$
Then, $\min_{\x\in\X}\max_{\y\in\Y}f(\x,\y)=\max_{\y\in\Y}\min_{\x\in\X}f(\x,\y)=1$.
By \cite[Theorem 1.4.1]{Pang-book2}, there exists a saddle point to problem \eqref{example1} with $\lambda_1=\lambda_2=3$, and $(0,0)$ is the unique saddle point.

Case 3 (all global minimax points are not local minimax points): Let $\lambda_1=\lambda_2=1$.
We have
$$p(\x)=\max_{\y\in\Y}f(\x,\y)=\left\{\begin{aligned}
&\max\{3-3\x,2-\x,\x-1\}&&\mbox{\emph{if} $\x\neq0$}\\
&2&&\mbox{\emph{if} $\x=0$}.
\end{aligned}\right.$$
Then, $\x^*=\arg\min_{\x\in\X}p(\x)=\{{3}/{2}\}$ and $\arg\max_{\y\in\Y}f(\x^*,\y)=\{0,2\}$. Thus, the set of global minimax points of \eqref{example1} with $\lambda_1=\lambda_2=1$  contains only two points
$({3}/{2},0)$ and $({3}/{2},2)$, and $f({3}/{2},0)=f({3}/{2},2)=1/2$. Moreover, around
$\x^*=3/2$, for any $0<\delta<1/2$,
$\max_{\y'\in\{\y\in\Y: |\y|\leq \delta\}}f(\x,\y')=f(\x,0)=2-\x$  and $\max_{\y'\in\{\y\in\Y:|\y-2|\leq\delta\}}f(\x,\y')=f(\x,2)=\x-1$, which means  that neither $({3}/{2},0)$ nor $({3}/{2},2)$ is a local minimax point of \eqref{example1} with $\lambda_1=\lambda_2=1$.
\end{example}

Although Proposition \ref{prop4} shows that problem \eqref{obb} always has local saddle points, not all of them are interesting due to the sparsity.
Inspired by this, we introduce a class of strong local saddle points of \eqref{obb}.
 \begin{definition}\label{lif-sta}
For a given $\nu> 0$, we call $({\x}^*,{\y}^*)\in\X\times\Y$ a \textbf{$\nu$-strong local saddle point} of problem \eqref{obb}, if it is a local saddle point of \eqref{obb} and satisfies the lower bound property as follows:
\begin{equation}\label{lower1}
g_i({\x}^*)\not\in(0,\nu),\,\,\forall i\in[\hat{n}]\quad \mbox{and}\quad
h_j({\y}^*)\not\in(0,\nu),\,\,\forall j\in[\hat{m}].
\end{equation}
\end{definition}

For any local saddle point $(\x^*,\y^*)$ of \eqref{obb}, there exists a $\nu>0$ such that
\eqref{lower1} holds,
where we can set $\nu=\min\{1, g_i(\x^*),h_j(\y^*):i\in\mathcal{A}^+(\x^*),j\in\mathcal{B}^+(\y^*)\}$. However, for a given $\nu>0$, not all local saddle points of \eqref{obb} satisfy \eqref{lower1} (see Example \ref{example2.1}). In particular,
if $({\x}^*,{\y}^*)$ is a $\nu$-strong local saddle point of \eqref{obb-4}, then $$\mbox{$|\x^*_i|\not\in(0,\nu)$ and $|\y_j^*|\not\in(0,\nu)$, $\forall i\in[n]$, $j\in[m]$,}$$
which not only helps us distinguish the zero and nonzero elements efficiently, but also provides a solution with certain stability \cite{BC-SINUM,Chen-Xu-Ye}. Hence the study of $\nu$-strong local saddle points of \eqref{obb} is interesting and important.

\begin{example}\label{example2.1}
Consider the following min-max problem
\begin{equation}\label{eq2.9}
\min_{\x\in\X}\max_{\y\in\Y}\,f(\x,\y):=|\x_1+\x_2-1|(\y+1)+\|\x\|_0-3\|\y\|_0
\end{equation}
with $\X=[-1,1]^2$ and $\Y=[-1,1]$.
By simple calculation, we can find that \eqref{eq2.9} has three saddle points, i.e. $(0,0,0)^\top$, $(1,0,0)^\top$ and $(0,1,0)^\top$
with $$\min_{\x\in\X}\max_{\y\in\Y}f(\x,\y)=\max_{\y\in\Y}\min_{\x\in\X}\,f(\x,\y)=1.$$

The local saddle point set of \eqref{eq2.9} is
$$\mathcal{SL}:=\{(\x_1,1-\x_1,\y)^\top:\x_1\in[-1,1],\y\in[-1,1]\}\cup\{(0,0,0)^\top, (0,0,1)^\top\},$$
while the $\nu$-strong local saddle point set of \eqref{eq2.9} with $0<\nu<1$ is
$$\mathcal{SL}\cap\{(\x_1,\x_2,\y)^\top:\, |\x_1|\not\in(0,\nu),\,|\x_2|\not\in(0,\nu)\,\mbox{and}\,\,\,|\y|\not\in(0,\nu)\}.$$
Notice that the $\nu$-strong local saddle point set of \eqref{eq2.9} is a proper subset of its local saddle point set, and contains all saddle points of \eqref{eq2.9}.
\end{example}

For a given $\delta>0$, by \cite[Theorem 1.4.1]{Pang-book2}, we know that $(\bar{\x},\bar{\y})$ is a saddle point of
problem (\ref{obb}) on $(\X\cap \mathbf{B}(\bar{\x},\delta) ) \times (\Y\cap \mathbf{B}(\bar{\y},\delta))$ if and only if
\begin{equation}\label{eq-e-3}
\max_{\y\in\Y\cap \mathbf{B}(\bar{\y},\delta)}\min_{\x\in\X\cap \mathbf{B}(\bar{\x},\delta)}f(\x,\y)=f(\bar{\x},\bar{\y})=
\min_{\x\in\X\cap \mathbf{B}(\bar{\x},\delta)}\max_{\y\in\Y\cap \mathbf{B}(\bar{\y},\delta)}f(\x,\y).
\end{equation}
Hence $(\bar{\x},\bar{\y})\in\X\times\Y$ is a local saddle point of (\ref{obb}) if and only if there is a $\delta>0$ such that \eqref{eq-e-3} holds.

Next, we provide the relation between $\nu$-strong local saddle points of \eqref{obb} and $\nu$-strong local  saddle points of \eqref{obb-c} restricted to a certain set.
\begin{theorem}\label{theorem2}
For $\nu>0$ and $(\bar{\x},\bar{\y})\in\X\times\Y$,
let $\hat{\X}=\{\x \in \X : g_i(\x)\leq 0,\,\forall i\not\in\mathcal{A}^+(\bar{\x})\}$ and $\hat{\Y}=\{\y\in \Y : h_j(\y)\leq 0,\,\forall j\not\in\mathcal{B}^+(\bar{\y})\}$. Then
the following statements are equivalent.
\begin{itemize}
\item [{\rm (i)}] $(\bar{\x},\bar{\y})$ is a $\nu$-strong local saddle point of \eqref{obb};
\item [{\rm (ii)}] $(\bar{\x},\bar{\y})$ is a local saddle point of \eqref{obb-c} on $(\X\cap\hat{\X})\times(\Y\cap\hat{\Y})$ and satisfies \eqref{lower1}.
\end{itemize}
\end{theorem}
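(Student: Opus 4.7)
The plan is to exploit the key observation that on $(\X\cap\hat{\X})\times(\Y\cap\hat{\Y})$, near $(\bar{\x},\bar{\y})$, the two cardinality terms in $f$ are locally \emph{constant}, so $f$ agrees with $c$ up to an additive constant on this restricted neighborhood. Since the lower bound property \eqref{lower1} appears verbatim in both (i) and (ii), only the two saddle point inequalities really need to be transferred back and forth.

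I would begin with a setup lemma: by continuity of $g$ and $h$, there is $\delta_0>0$ such that $g_i(\x)>0$ for every $i\in\mathcal{A}^+(\bar{\x})$ and $\x\in\mathbf{B}(\bar{\x},\delta_0)$, and $h_j(\y)>0$ for every $j\in\mathcal{B}^+(\bar{\y})$ and $\y\in\mathbf{B}(\bar{\y},\delta_0)$. Combined with the defining inequalities of $\hat{\X}$ and $\hat{\Y}$, this forces $\|g(\x)_+\|_0=|\mathcal{A}^+(\bar{\x})|$ on $\X\cap\hat{\X}\cap\mathbf{B}(\bar{\x},\delta_0)$ and $\|h(\y)_+\|_0=|\mathcal{B}^+(\bar{\y})|$ on $\Y\cap\hat{\Y}\cap\mathbf{B}(\bar{\y},\delta_0)$. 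Writing $\gamma:=\lambda_1|\mathcal{A}^+(\bar{\x})|-\lambda_2|\mathcal{B}^+(\bar{\y})|$, we then have $f(\x,\y)=c(\x,\y)+\gamma$ on this product set.

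For (i)$\Rightarrow$(ii), I would shrink the local saddle radius $\delta$ of $f$ below $\delta_0$; the saddle inequalities $f(\bar{\x},\y)\le f(\bar{\x},\bar{\y})\le f(\x,\bar{\y})$, when restricted to $(\X\cap\hat{\X})\cap\mathbf{B}(\bar{\x},\delta)$ and $(\Y\cap\hat{\Y})\cap\mathbf{B}(\bar{\y},\delta)$, translate immediately into $c(\bar{\x},\y)\le c(\bar{\x},\bar{\y})\le c(\x,\bar{\y})$ via the identity $f=c+\gamma$. Property \eqref{lower1} is inherited directly.

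The reverse direction (ii)$\Rightarrow$(i) is the main obstacle, because for $\x\in\X\cap\mathbf{B}(\bar{\x},\delta)$ outside $\hat{\X}$ there is some $i_0\notin\mathcal{A}^+(\bar{\x})$ with $g_{i_0}(\x)>0$, so the cardinality $\|g(\x)_+\|_0$ jumps to at least $|\mathcal{A}^+(\bar{\x})|+1$ and cannot be analyzed through the local saddle inequality for $c$. My strategy is to turn this jump into a barrier using Lipschitz continuity of $c$: if $L$ is a Lipschitz constant of $c$ on the compact set $\X\times\Y$, then for such $\x$,
\begin{equation*}
f(\x,\bar{\y})\ge c(\bar{\x},\bar{\y})-L\delta+\lambda_1(|\mathcal{A}^+(\bar{\x})|+1)-\lambda_2|\mathcal{B}^+(\bar{\y})|=f(\bar{\x},\bar{\y})+\lambda_1-L\delta.
\end{equation*}
Choosing the overall radius $\delta<\min\{\delta_0,\delta_1,\lambda_1/(2L),\lambda_2/(2L)\}$, where $\delta_1$ is the local saddle radius of $c$ on $(\X\cap\hat{\X})\times(\Y\cap\hat{\Y})$ given by (ii), makes the last term strictly positive. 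For $\x\in\hat{\X}$ the desired inequality follows from (ii) via the identity $f=c+\gamma$. A symmetric argument with $\lambda_2$ handles the maximization side: $\y\notin\hat{\Y}$ forces $\|h(\y)_+\|_0\ge|\mathcal{B}^+(\bar{\y})|+1$, so $f(\bar{\x},\y)\le f(\bar{\x},\bar{\y})-\lambda_2+L\delta<f(\bar{\x},\bar{\y})$. Taken together these give the local saddle point inequalities of $f$ on $\X\cap\mathbf{B}(\bar{\x},\delta)$ and $\Y\cap\mathbf{B}(\bar{\y},\delta)$, and \eqref{lower1} transfers back verbatim, yielding (i).
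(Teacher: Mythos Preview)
Your proposal is correct and follows essentially the same route as the paper. Both arguments hinge on the same two facts: near $(\bar{\x},\bar{\y})$ the cardinality terms are constant on $\hat{\X}\times\hat{\Y}$ (so $f=c+\text{const}$ there), and for $\x\notin\hat{\X}$ the cardinality $\|g(\x)_+\|_0$ jumps by at least one, which the continuity of $c$ can absorb on a small enough ball. The only cosmetic difference is that you invoke the Lipschitz constant $L$ explicitly and pick $\delta<\lambda_1/(2L)$, whereas the paper phrases the same step qualitatively, choosing $\delta_3$ so that $c(\bar{\x},\bar{\y})\le c(\x,\bar{\y})+\lambda_1$ on $\mathbf{B}(\bar{\x},\delta_3)$; and in (i)$\Rightarrow$(ii) the paper uses the global inequality $\|g(\x)_+\|_0\le\|g(\bar{\x})_+\|_0$ on $\hat{\X}$ rather than your local equality $\|g(\x)_+\|_0=|\mathcal{A}^+(\bar{\x})|$.
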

\begin{proof}
(i)$\Rightarrow$(ii).
 Suppose $(\bar{\x},\bar{\y})$ is a $\nu$-strong local saddle point of \eqref{obb}, then there exists a $\delta>0$ such that
\begin{equation}\label{eq-e-4}
f(\bar{\x},\y)\leq f(\bar{\x},\bar{\y})\leq f({\x},\bar{\y}),\quad \forall\, \x\in\X\cap\mathbf{B}(\bar{\x},\delta),\,\y\in\Y\cap\mathbf{B}(\bar{\y},\delta).
\end{equation}

For any $\x\in\X\cap \hat{\X}$, it holds that $\|g(\x)_+\|_0\leq \|g(\bar{\x})_+\|_0$. Rearranging the second inequality in \eqref{eq-e-4} gives
$$c(\bar{\x},\bar{\y})+\lambda_1\|g(\bar{\x})_+\|_0\leq c({\x},\bar{\y})+\lambda_1\|g({\x})_+\|_0,\quad \forall\, \x\in\X\cap \mathbf{B}(\bar{\x},\delta).$$
Thus, $\bar{\x}$ is a local minimizer of $c(\cdot,\bar{\y})$ on {$\X\cap\hat{\X}$}. Following the same way, the first inequality in \eqref{eq-e-4} gives that $\bar{\y}$ is a local maximizer of $c(\bar{\x},\cdot)$ on $\Y\cap\hat{\Y}$. Thus, (ii) holds.

(ii)$\Rightarrow$(i). Since $\bar{\x}$ is a local minimizer of $c(\cdot,\bar{\y})$ on ${\X}\cap\hat{\X}$ and satisfies \eqref{lower1}, there exists a
$\delta_1>0$ such that
$$c(\bar{\x},\bar{\y})\leq c(\x,\bar{\y}),\quad \forall\,\x\in\X\cap\hat{\X}\cap \mathbf{B}(\bar{\x},\delta_1)$$
and
\begin{equation}\label{eq-e-2}
g_i(\bar{\x})\not\in(0,\nu),\,\, \forall i\in[\hat{n}].
\end{equation}

Based on \eqref{eq-e-2}, there exists a $\delta_2\in(0,\delta_1]$ such that
$g_i({\x})>0$, $\forall\,\x\in \mathbf{B}(\bar{\x},\delta_2),\,i\in\mathcal{A}^+(\bar{\x})$,
which implies
\begin{equation}\label{eq-e-5}
\|g(\bar{\x})_+\|_0\leq \|g({\x})_+\|_0,\quad \forall\, \x\in \mathbf{B}(\bar{\x},\delta_2).
\end{equation}
Then,
\begin{equation}\label{eq-e-6}
f(\bar{\x},\bar{\y})\leq f(\x,\bar{\y}),\quad \forall\,\x\in\X\cap\hat{\X}\cap \mathbf{B}(\bar{\x},\delta_2).
\end{equation}

Due to the continuity of $c(\cdot,\bar{\y})$, there is a $\delta_3\in(0,\delta_2]$ such that
\begin{equation}
c(\bar{\x},\bar{\y})\leq c(\x,\bar{\y})+\lambda_1, \quad\forall\, \x\in \mathbf{B}(\bar{\x},\delta_3).
\end{equation}
When $\x\in\X\cap\mathbf{B}(\bar{\x},\delta_3)$ but ${\x}\not\in\hat{\X}$, there exists an $\hat{i}\not\in\mathcal{A}^+(\bar{\x})$ such that $g_{\hat{i}}({\x})>0$, which together with \eqref{eq-e-5} further gives
\begin{equation}\label{eq-e-7}
\|g(\bar{\x})_+\|_0+1\leq \|g({{\x}})_+\|_0.
\end{equation}

Thanks to \eqref{eq-e-6}-\eqref{eq-e-7}, we have
\begin{equation}\label{eq-e-8}
f(\bar{\x},\bar{\y})\leq f(\x,\bar{\y}),\quad \forall\,\x\in\X\cap\mathbf{B}(\bar{\x},\delta_3).
\end{equation}

We can ensure $f(\bar{\x},{\y})\leq f(\bar{\x},\bar{\y})$, $\forall\y\in\Y\cap\mathbf{B}(\bar{\y},\delta_4)$ with $\delta_4>0$ in the same way. Thus,
$(\bar{\x},\bar{\y})$ is a $\nu$-strong local saddle point of \eqref{obb}.
\end{proof}

For problem \eqref{obb-4},
$\hat{\X}=\{\x\in\X: \x_i=0, \forall i\in\mathcal{A}^0(\bar{\x})\}$ and $\hat{\Y}=\{\y\in \Y: \y_j=0,\,\forall j\in\mathcal{B}^0(\bar{\y})\}$.
Thus, $(\bar{\x},\bar{\y})$ is a $\nu$-strong local saddle point of \eqref{obb-4} if and only if $(\bar{\x},\bar{\y})$ is a  $\nu$-strong saddle point of \eqref{obb-c} in the subspace corresponding to nonzero components of $(\bar{\x},\bar{\y})$.

\section{Continuous relaxations}\label{section3}
In this section, we propose a class of  continuous relaxations to the cardinality function in min-max problem \eqref{obb} based on convolution \cite{Chen-Mangasarian}, which include the capped-$\ell_1$ function \cite{Peleg2008}, SCAD function \cite{Fan1997}, MCP function \cite{Zhang2011} and hard thresholding penalty function \cite{Fan1997} as special cases. Then, we show the existence of the local saddle points of the continuous relaxations of \eqref{obb}.
\subsection{Density functions}

Let $\rho:\mathbb{R}\rightarrow\mathbb{R}_+$ be a piecewise continuous density function satisfying
\begin{equation}\label{eq-ass1}
\mbox{$\rho(s)=0$, $\forall s\not\in[0,\alpha]$}
\end{equation}
with a positive number $\alpha$,  which means that $\int_{0}^{\alpha}\rho(s)ds=1$.
Then, for any fixed $\mu>0$,
\begin{equation}\label{c-re}
\begin{aligned}
r(t,\mu):=&\int_{-\infty}^{+\infty}((t-\mu s)_+)^0\rho(s)ds\\
=&\int_{-\infty}^{\frac{t}{\mu}}\rho(s)ds=(t_+)^0
+\left\{\begin{array}{lc}
           0 & \, \mbox{if $t\leq0$} \\
          -\int_{\frac{t}{\mu}}^{+\infty}\rho(s)ds & \, \mbox{if $t>0$}
         \end{array}
\right.
\end{aligned}\end{equation}
is  well-defined,
and
\begin{equation}\label{eq-19}
\partial_tr(t,\mu)={\rm co }\left\{\lim\nolimits_{s\rightarrow {t}}\frac{\rho(s/\mu)}{\mu}\right\}.
\end{equation}

By \eqref{c-re}, for any $\mu>0$, we have
\begin{eqnarray}
r(t,\mu)=(t_+)^0,\quad \forall\, t\in(-\infty,0]\cup[\alpha\mu,+\infty), \label{eq-rho}\\
r(t,\mu)-(t_+)^0=-\int_{t/\mu}^{+\infty}\rho(s)ds\leq0, \quad \forall\, t>0,\label{eq-18-1}\\
\lim\nolimits_{\mu\downarrow0}r(t,\mu)
=(t_+)^0,\quad \forall \,t\in\mathbb{R},\label{eq-18}\\
\lim\nolimits_{a\rightarrow{t},\mu\downarrow0}r(a,\mu)
=(t_+)^0,\quad \forall\, {t}\neq0.\label{eq-18-2}
\end{eqnarray}
{For any $t\in\R$, we see from \eqref{eq-18-1} and \eqref{eq-18} that $r(t,\mu)$ approximates $(t_+)^0$ from below as $\mu$ tends to 0.}
In what follows, we give four examples of the function $r$ with $\rho$ satisfying \eqref{eq-ass1}.
\begin{example}\label{example4.1}
Choose a density function with $\alpha=1$ as
$$\rho(s)=\left\{\begin{aligned}&1&&\mbox{\emph{if} $0\leq s\leq1$}\\
&0&&\mbox{\emph{otherwise}}\end{aligned}\right.\quad\Rightarrow
\quad r(t,\mu)=\left\{
\begin{aligned}
&0&&\mbox{\emph{if} $t\leq0$}\\
&\frac{t}{\mu}&&\mbox{\emph{if} $0<t\leq\mu$}\\
&1&&\mbox{\emph{if} $t>\mu$.}
\end{aligned}\right.$$
Here, $r(\cdot,\mu)$ is the \textbf{capped-$\ell_1$ function} $\varphi_{\tt{cap}}$\footnote{capped-$\ell_1$ function: $\varphi_{\tt{cap}}(t)=\min\{1,|t|/\mu\}$.}.
\end{example}
\begin{example}\label{example4.2}
For any $\alpha>1$, choose a density function as
$$\rho(s)=\left\{\begin{aligned}&\frac{2}{\alpha+1}&&\mbox{\emph{if} $0\leq s\leq1$}\\
&\frac{2\alpha-2s}{(\alpha-1)(\alpha+1)}&&\mbox{\emph{if} $1<s\leq \alpha$}\\
&0&&\mbox{\emph{otherwise}}
\end{aligned}\right.
\quad\Rightarrow
\quad
r(t,\mu)=\left\{
\begin{aligned}
&0&&\mbox{\emph{if} $t\leq0$}\\
&\frac{2t}{(\alpha+1)\mu}&&\mbox{\emph{if} $0<t\leq\mu$}\\
&\frac{2\alpha\mu t-t^2-\mu^2}{(\alpha-1)(\alpha+1)\mu^2}&&\mbox{\emph{if} $\mu<t\leq \alpha\mu$}\\
&1&&\mbox{\emph{if} $t>\alpha\mu$}.
\end{aligned}\right.$$
Here, $r(\cdot,\mu)$  is a scaled \textbf{SCAD function} $\varphi_{\tt{SCAD}}$\footnote{SCAD function: $\varphi_{\tt{SCAD}}(t)=\left\{
\begin{aligned}
& t\quad &&\mbox{if $t\leq\mu$}\\
&\frac{2\alpha\mu t-t^2-\mu^2}{2(\alpha-1)\mu}\quad &&\mbox{if $\mu<t\leq \alpha\mu$}\\
&\frac{(\alpha+1)\mu}{2}\quad &&\mbox{if $t>\alpha\mu$.}
\end{aligned}\right.
$},
i.e. $r(t,\mu)=\frac{2}{(\alpha+1)\mu}\varphi_{\tt{SCAD}}(t)$, $\forall t
\in\R_+$.
\end{example}
\begin{example}\label{example4.3}
For any $\alpha>0$, choose a density function as
$$\rho(s)=\left\{\begin{aligned}&\frac{2}{\alpha}-\frac{2s}{\alpha^2}&&\mbox{\emph{if} $0\leq s\leq \alpha$}\\
&0&&\mbox{\emph{otherwise}}
\end{aligned}\right.
\quad\Rightarrow
\quad
r(t,\mu)=\left\{
\begin{aligned}
&0&&\mbox{\emph{if} $t\leq0$}\\
&\frac{2t}{\alpha\mu}-\frac{t^2}{\alpha^2\mu^2}&&\mbox{\emph{if} $0<t\leq \alpha\mu$}\\
&1&&\mbox{\emph{if} $t>\alpha\mu$}.
\end{aligned}\right.$$
Here, $r(\cdot,\mu)$ is a scaled \textbf{MCP function} $\varphi_{\tt{MCP}}$\footnote{MCP function: $\varphi_{\tt{MCP}}(t)=\left\{\begin{aligned}
&\frac{\alpha\mu}{2}\quad &&\mbox{if $t\geq
\alpha\mu$}\\
&t-\frac{t^2}{2\alpha\mu}&&\mbox{if $t< \alpha\mu$.}\end{aligned}\right.
$},
i.e. $r(t,\mu)=\frac{2}{\alpha\mu}\varphi_{\tt{MCP}}(t)$, $\forall t
\in\R_+$.
\end{example}
\begin{example}\label{example4.4}
Choose a density function with $\alpha=1$ as
$$\rho(s)=\left\{\begin{aligned}&2(1-s)&&\mbox{\emph{if} $0<s<1$}\\
&0&&\mbox{\emph{otherwise}}
\end{aligned}\right.
\quad\Rightarrow
\quad
r(t,\mu)=\left\{
\begin{aligned}
&0&&\mbox{\emph{if} $t\leq0$}\\
&1-(1-{t}/{\mu})^2&&\mbox{\emph{if} $0<t<\mu$}\\
&1&&\mbox{\emph{if} $t>\mu$.}
\end{aligned}\right.$$
Here, $r(\cdot,\mu)$ is the \textbf{hard thresholding penalty function} $\varphi_{\tt{hard}}$\footnote{hard thresholding penalty function: $\varphi_{\tt{hard}}(t)=1-(1-t/\mu)_+^2
$.}.
\end{example}

For further analysis, we bring forward  two assumptions on density function $\rho$.
\begin{assumption}\label{ass7}
There exist positive numbers $\underline{\rho}$ and $\bar{\rho}$ such that the density function $\rho:\mathbb{R}\rightarrow\mathbb{R}_+$ satisfies
$$\underline{\rho}\leq \rho(s)\leq \overline{\rho},\quad \forall s\in(0,\alpha).$$
\end{assumption}

\begin{assumption}\label{ass8}
The density function $\rho$ is Lipschitz continuous on $\mathbb{R}_{++}$ and
there exist $\underline{\rho}_2>0$ and $\check{\rho}_2>0$ such that for any $s\in(0,\alpha)$,
$$
\mbox{either} \,\; \rho(s)\geq \underline{\rho}_2\,\;\; \mbox{or}\,\;\;
{\sup\{a:{a\in\partial\rho(s)}\}\leq-\check{\rho}_2}.$$
\end{assumption}

Notice that if $\rho$ is Lipschitz continuous on $\mathbb{R}_{++}$ and  $\rho(s)=0$, $\forall s\not\in[0,\alpha]$, then Assumption \ref{ass7} fails. Thus Assumption \ref{ass7} and Assumption \ref{ass8} are two different situations.

When the density function $\rho$ satisfies Assumption \ref{ass7}, we have
$$\inf\{\xi:\xi\in\partial _tr(t,\mu)\}\geq\underline{\rho}/\mu, \quad \forall\, t\in(0,\alpha\mu).$$

When the density function $\rho$ satisfies Assumption \ref{ass8},
inspired by \eqref{eq-19}, we have that  for any $\mu>0$, $r(\cdot,\mu)$ is Lipschitz continuously differentiable on $\R_{++}$ and satisfies
\begin{equation}\label{sec-r}
\partial^2_tr(t,\mu)=\partial\rho(t/\mu)/\mu^2, \quad \forall\, t>0,
\end{equation}
which implies for any $t\in(0,\alpha\mu)$ such that $\sup\{a:a\in\partial\rho(t/\mu)\}\leq-\check{\rho}_2$, it holds
$$\sup\{\xi:\xi\in\partial^2_tr(t,\mu)\}\leq- \check{\rho}_2/\mu^2.$$

{Since all the four density functions $\rho$ in Examples \ref{example4.1}-\ref{example4.4} satisfy \eqref{eq-ass1}, $r(t,\mu)$ in these examples satisfy \eqref{eq-rho}-\eqref{eq-18}. To end this subsection, we use Table \ref{table1} to conclude the different properties of the density functions and the corresponding continuous functions $r$ in these four examples.}
\begin{table}[htbp]
\centering{
\begin{tabular}{|c|c|c|c|} \hline
Example &differentiability of $r(\cdot,\mu)$ &Assumption \ref{ass7} &Assumption \ref{ass8}\\
\hline
\ref{example4.1}& not at $0$, $\mu$& $\underline{\rho}=\overline{\rho}=1$&$\times$\\
\hline
\ref{example4.2}& not at $0$&$\times$& $\underline{\rho}_2=\frac{2}{\alpha+1}$, $\check{\rho}_2=\frac{2}{(\alpha+1)(\alpha-1)}$\\
\hline
\ref{example4.3}& not at $0$&$\times$& $\underline{\rho}_2>0$, $\check{\rho}_2=\frac{2}{\alpha^2}$\\
\hline
\ref{example4.4}& not at $0$&$\times$& $\underline{\rho}_2>0$, $\check{\rho}_2=2$\\
\hline
\end{tabular}}\caption{Properties of the density functions $\rho$ and corresponding functions $r$ in Examples \ref{example4.1}-\ref{example4.4}}\label{table1}
\end{table}
\subsection{Continuous relaxation models to \eqref{obb}}
In what follows, we will use the continuous function $r$ defined in \eqref{c-re} to
approximate the cardinality function in \eqref{obb}.
For $i\in[\hat{n}]$ and $j\in[\hat{m}]$, denote
$$\phi_{i}(\x)=(g_i(\x)_+)^0\quad\mbox{and}\quad
\psi_{j}(\y)=(h_j(\y)_+)^0,$$
and define their continuous relaxations by
\begin{equation}\label{eq-con1}
{\phi}_{i}^{R}(\x,\mu)=r(g_i(\x),\mu) \quad
\mbox{and}\quad{\psi}_{j}^{R}(\y,\mu)=r(h_j(\y),\mu).
\end{equation}
For any $\mu>0$, $i\in[\hat{n}]$ and $j\in[\hat{m}]$, by \eqref{eq-18-1}, we have
\begin{equation}\label{eq-re2}
 \phi_{i}^R(\x,\mu)\leq \phi_{i}(\x)\quad
\mbox{and}\quad\psi_{j}^R(\y,\mu)\leq \psi_{j}(\y), \quad \quad \forall \, \x\in\mathcal{X}, \,\y\in\mathcal{Y}.
\end{equation}

We propose the continuous relaxation of \eqref{obb} as follows
\begin{equation}\label{ob-r}
\min_{\x\in\X}\max_{\y\in\Y} f^R(\x,\y,\mu):=c(\x,\y)+\lambda_1\sum_{i=1}^{\hat{n}}\phi_{i}^R(\x,\mu)-\lambda_2
\sum_{j=1}^{\hat{m}}\psi_{j}^R(\y,\mu),
\end{equation}
where $\mu$ is a given positive number.
Here, $f^R(\cdot,\cdot,\mu)$ in \eqref{ob-r} is Lipschitz continuous on $\X\times\Y$, and it is clear by \eqref{eq-18} that,
$\lim_{\mu\downarrow0}f^R(\x,\y,\mu)=f(\x,\y)$ for any $\x\in\mathcal{X}$ and $\y\in\mathcal{Y}$.

Notice that \eqref{ob-r} is a nonconvex-nonconcave min-max problem and may not have a
(local) saddle point.
However, similar to Proposition \ref{prop4},
we can have the existence results for the local saddle points of \eqref{ob-r}.
\begin{proposition}\label{prop5}
There exists a $\tilde{\mu}>0$ such that \eqref{ob-r} has a local saddle point for any $\mu\in(0,\tilde{\mu})$.
\end{proposition}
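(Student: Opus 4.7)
The plan is to mirror the proof of Proposition~\ref{prop4}, exploiting three structural features of $r(\cdot,\mu)$ from \eqref{eq-rho}--\eqref{eq-18}: $r(t,\mu)=0$ for $t\leq 0$, $r(t,\mu)=1$ for $t\geq\alpha\mu$, and $r(t,\mu)\in[0,1]$ is nondecreasing in $t$ (the latter inherited from $\rho\geq 0$ via \eqref{eq-19}). First I would invoke Sion's minimax theorem on \eqref{obb-c} to produce a saddle point $(\x^*,\y^*)\in\X\times\Y$ of the convex-concave problem, i.e.\ $c(\x^*,\y)\leq c(\x^*,\y^*)\leq c(\x,\y^*)$ for every $(\x,\y)\in\X\times\Y$. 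This same pair will be the candidate local saddle point of \eqref{ob-r}, independently of $\mu$.

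Next, using continuity of the $g_i$'s and $h_j$'s I would choose $\delta>0$ and $\tau>0$ so small that on $\X\cap\mathbf{B}(\x^*,\delta)$ one has $g_i(\x)\geq\tau$ for every $i\in\mathcal{A}^+(\x^*)$ and $g_i(\x)<0$ for every $i$ with $g_i(\x^*)<0$, and analogously for the $h_j$'s on $\Y\cap\mathbf{B}(\y^*,\delta)$; set $\tilde{\mu}=\tau/\alpha$. Then for any $\mu\in(0,\tilde{\mu})$ and any $\x\in\X\cap\mathbf{B}(\x^*,\delta)$, each $\phi_i^R(\x,\mu)$ falls into one of three regimes indexed by the sign of $g_i(\x^*)$: identically $1$ on $\mathcal{A}^+(\x^*)$ since $g_i(\x)\geq\tau>\alpha\mu$; identically $0$ on the strictly negative index set; and on $\mathcal{A}^0(\x^*)$ the bound $\phi_i^R(\x^*,\mu)=r(0,\mu)=0\leq\phi_i^R(\x,\mu)$ holds unconditionally. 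Symmetric statements hold for $\psi_j^R(\y,\mu)$ on $\Y\cap\mathbf{B}(\y^*,\delta)$.

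Substituting this trichotomy into the local saddle inequalities, the difference $f^R(\x,\y^*,\mu)-f^R(\x^*,\y^*,\mu)$ collapses to $c(\x,\y^*)-c(\x^*,\y^*)+\lambda_1\sum_{i\in\mathcal{A}^0(\x^*)}\phi_i^R(\x,\mu)$, while $f^R(\x^*,\y^*,\mu)-f^R(\x^*,\y,\mu)$ collapses to $c(\x^*,\y^*)-c(\x^*,\y)+\lambda_2\sum_{j\in\mathcal{B}^0(\y^*)}\psi_j^R(\y,\mu)$. Both are nonnegative (the $c$-differences by the saddle property of $c$, the sums because $r\geq 0$), yielding the local saddle inequalities for $f^R$ on $\mathbf{B}(\x^*,\delta)\times\mathbf{B}(\y^*,\delta)$.

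The only delicate step is the index set $\mathcal{A}^0(\x^*)$ (and symmetrically $\mathcal{B}^0(\y^*)$), where continuity gives no control over the sign of $g_i(\x)$ for nearby $\x$; the argument survives precisely because $r(\cdot,\mu)$ is nonnegative everywhere and vanishes on $(-\infty,0]$, so $\phi_i^R(\x,\mu)\geq 0=\phi_i^R(\x^*,\mu)$ regardless of which side of zero $g_i(\x)$ lies on. Everywhere else the uniform gap $\tau$ supplied by continuity, together with the choice $\mu<\tilde{\mu}$, pins $r$ into its saturated values $\{0,1\}$, reducing the bookkeeping of active sets to exactly the one performed in Proposition~\ref{prop4}.
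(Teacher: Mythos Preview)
Your proof is correct and follows essentially the same route as the paper's: start from a saddle point $(\x^*,\y^*)$ of the convex--concave problem \eqref{obb-c} (obtained via Sion's theorem), then use continuity of $g,h$ together with the saturation property \eqref{eq-rho} of $r(\cdot,\mu)$ to force the penalty terms to satisfy $\sum_i\phi_i^R(\x^*,\mu)\le\sum_i\phi_i^R(\x,\mu)$ and $\sum_j\psi_j^R(\y^*,\mu)\le\sum_j\psi_j^R(\y,\mu)$ on a neighborhood, after which the local saddle inequalities for $f^R$ follow immediately from those for $c$. Your treatment is slightly more explicit than the paper's in that you spell out the trichotomy over the index sets $\mathcal{A}^+(\x^*)$, $\{i:g_i(\x^*)<0\}$, and $\mathcal{A}^0(\x^*)$ (the paper handles the latter two implicitly via $r\ge 0$), and you arrange $\delta$ to be chosen before $\mu$ so that the same neighborhood works for all $\mu\in(0,\tilde{\mu})$; but these are cosmetic refinements, not a different argument.
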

\begin{proof}
Let $(\x^*,\y^*)$ be a saddle point of $\min_{\x\in\X}\max_{\y\in\Y}c(\x,\y)$, i.e. \eqref{eq-17} holds.
Denote
$\vartheta=\min\{1, g_i(\x^*),\,h_j(\y^*):i\in\mathcal{A}^+(\x^*),j\in\mathcal{B}^+(\y^*)\}$
 and set
$\tilde{\mu}=\vartheta/2\alpha$, then $g_i(\x^*)\geq2\alpha\tilde{\mu}$, $h_j(\y^*)\geq2\alpha\tilde{\mu}$, $\forall i\in\mathcal{A}^+(\x^*)$, $j\in\mathcal{B}^+(\y^*)$.  Choose $\mu\in(0,\tilde{\mu})$. By the continuity of $g$ and $h$, there exists a $\delta_1>0$ such that for any
$i\in\mathcal{A}^+(\x^*)$, $j\in\mathcal{B}^+(\y^*)$, $\x\in \textbf{B}(\x^*,\delta)$ and $\y\in\textbf{B}(\y^*,\delta)$,  it holds
$g_i(\x)\geq\alpha\mu$ and $h_j(\y)\geq\alpha\mu$,
by \eqref{eq-rho}, which further implies
$$
\phi_{i}^R(\x,\mu)=1\quad\mbox{and}\quad \psi_j^R(\y,\mu)=1.$$
This means that, for any $\x\in \textbf{B}(\x^*,\delta)$ and $\y\in\textbf{B}(\y^*,\delta)$,
\begin{equation}\label{eq2.3}
\sum_{i=1}^{\hat{n}}\phi_{i}^R(\x^*,\mu)\leq\sum_{i=1}^{\hat{n}}\phi_{i}^R(\x,\mu)\quad\mbox{and}\quad
\sum_{j=1}^{\hat{m}}\psi_{j}^R(\y^*,\mu)\leq\sum_{j=1}^{\hat{m}}\psi_{j}^R(\y,\mu).
\end{equation}
Together \eqref{eq-17} with \eqref{eq2.3}, we obtain
$$f^R(\x^*,\y,\mu)\leq f^R(\x^*,\y^*,\mu)\leq f^R(\x,\y^*,\mu),\quad \x\in \mathbf{B}(\x^*,\delta)\cap\X,\,
\y\in \mathbf{B}(\y^*,\delta)\cap\Y,$$
which means that $(\x^*,\y^*)$ a local saddle point of \eqref{ob-r}.
\end{proof}

From the compactness of $\X$ and $\Y$, \eqref{ob-r} has global minimax points for any $\mu>0$.
\section{Theoretical analysis on exact continuous relaxations}\label{section4}
In this section, we will consider the consistence of problem \eqref{obb} and its continuous relaxation problem \eqref{ob-r} with the  density function $\rho$ satisfying
\eqref{eq-ass1}. Moreover, the smoothing approximation to a nonsmooth function $c$ is defined and discussed in subsection \ref{sec-smoothing}.
\subsection{Relations on saddle points}\label{sec4.1}
To proceed the discussion on the saddle points and local saddle points between problem \eqref{obb} and its
 continuous relaxation model \eqref{ob-r}, we need the following assumption, which will be discussed and verified in detail in Section \ref{section5}.
\begin{assumption}\label{ass6} For a given $\mu>0$,
the following conditions hold.
\begin{itemize}
\item [{\rm (i)}] For any $\bar{\y}\in\Y$, if $\x^*$ is a local minimizer of $f^R(\x,\bar{\y},\mu)$ on $\X$, then
\begin{equation}\label{eq-ass6-1}
g_i({\x}^*)\not\in(0,\alpha\mu), \quad\forall i\in[\hat{n}].
\end{equation}
\item [{\rm (ii)}] For any $\bar{\x}\in\X$, if $\y^*$ is a local maximizer of $f^R(\bar{\x},\y,\mu)$ on $\Y$, then
\begin{equation}\label{eq-ass6-2}
h_j({\y}^*)\not\in(0,\alpha\mu), \quad \forall j\in[\hat{m}].
\end{equation}
\end{itemize}
\end{assumption}

Under Assumption \ref{ass6}, if $(\x^*,\y^*)$ satisfies the lower bounds in \eqref{eq-ass6-1} and \eqref{eq-ass6-2}, by \eqref{eq-rho}, then
\begin{equation}\label{rela2}
\phi_{i}^R(\x^*,\mu)=\phi_{i}(\x^*),\quad\forall i\in[\hat{n}]\quad\mbox{and}\quad \psi_{j}^R(\y^*,\mu)=\psi_{j}(\y^*),\,\forall j\in[\hat{m}].
\end{equation}
Moreover, if Assumption \ref{ass6} holds for $\hat{\mu}$, then it also holds for any $\mu\in(0,\hat{\mu}]$.

Inspired by \eqref{rela2}, we have the following relations on the saddle points and local saddle points between problems \eqref{obb} and \eqref{ob-r}.
\begin{theorem}\label{theorem1}
Suppose problem \eqref{ob-r} satisfies Assumption \ref{ass6}, then
\begin{itemize}
\item [{\rm (i)}] $(\x^*,\y^*)$ is a saddle point of \eqref{obb} if and only if it is a
saddle point of \eqref{ob-r};
\item [{\rm (ii)}] $(\x^*,\y^*)$ is a local saddle point of \eqref{obb}, if it is a
local saddle point of \eqref{ob-r}.
\end{itemize}
\end{theorem}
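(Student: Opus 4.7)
The plan is to exploit Assumption~\ref{ass6} to force the identities $\phi_i^R(\x^*,\mu)=\phi_i(\x^*)$ and $\psi_j^R(\y^*,\mu)=\psi_j(\y^*)$ whenever $(\x^*,\y^*)$ is a (local) saddle point of either problem, so that $f^R(\x^*,\y^*,\mu)=f(\x^*,\y^*)$; the one-sided bounds $\phi_i^R\le\phi_i$ and $\psi_j^R\le\psi_j$ from \eqref{eq-re2} then let one transfer the saddle inequalities between $f$ and $f^R$.

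For the ``easy'' direction, which covers the reverse implication in~(i) and all of~(ii), I would start from a (local) saddle point $(\x^*,\y^*)$ of \eqref{ob-r}. By \eqref{eq2.2} applied on the relevant neighbourhood, $\x^*$ is a (local) minimizer of $f^R(\cdot,\y^*,\mu)$ and $\y^*$ is a (local) maximizer of $f^R(\x^*,\cdot,\mu)$. Assumption~\ref{ass6} then yields the lower-bound properties, and \eqref{eq-rho} upgrades these to the pointwise identities $\phi_i^R(\x^*,\mu)=\phi_i(\x^*)$ and $\psi_j^R(\y^*,\mu)=\psi_j(\y^*)$, so $f^R(\x^*,\y^*,\mu)=f(\x^*,\y^*)$. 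Using only $\phi_i^R\le\phi_i$ and $\psi_j^R\le\psi_j$ from \eqref{eq-re2}, one next obtains $f^R(\x,\y^*,\mu)\le f(\x,\y^*)$ and $f^R(\x^*,\y,\mu)\ge f(\x^*,\y)$ on the same set; chaining these with the equality at $(\x^*,\y^*)$ gives both required saddle inequalities for $f$.

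For the forward direction of~(i), suppose $(\x^*,\y^*)$ is a saddle point of \eqref{obb}. The idea is to introduce an auxiliary global minimizer $\x^\dagger$ of $f^R(\cdot,\y^*,\mu)$ on~$\X$ (it exists by compactness of~$\X$ and lower semicontinuity of $f^R$). Assumption~\ref{ass6}(i) gives $\phi_i^R(\x^\dagger,\mu)=\phi_i(\x^\dagger)$, hence $f^R(\x^\dagger,\y^*,\mu)=f(\x^\dagger,\y^*)+\lambda_2\Delta_\psi$ with $\Delta_\psi:=\sum_j(\psi_j(\y^*)-\psi_j^R(\y^*,\mu))\ge 0$, while \eqref{eq-re2} gives $f^R(\x^*,\y^*,\mu)\le f(\x^*,\y^*)+\lambda_2\Delta_\psi$. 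Combining these with the minimality $f^R(\x^\dagger,\y^*,\mu)\le f^R(\x^*,\y^*,\mu)$ and the saddle property $f(\x^*,\y^*)\le f(\x^\dagger,\y^*)$ collapses the whole chain to equality, which forces $\phi_i^R(\x^*,\mu)=\phi_i(\x^*)$ and shows that $\x^*$ is itself a global minimizer of $f^R(\cdot,\y^*,\mu)$. A symmetric argument using an auxiliary global maximizer $\y^\dagger$ of $f^R(\x^*,\cdot,\mu)$ together with Assumption~\ref{ass6}(ii) shows that $\y^*$ is a global maximizer of $f^R(\x^*,\cdot,\mu)$; by \eqref{eq2.2}, this is exactly the saddle property for~\eqref{ob-r}.

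The main obstacle is precisely this forward direction of~(i): Assumption~\ref{ass6} only activates at local minimizers/maximizers of the relaxed objective $f^R$, whereas a priori $\x^*$ is known only to minimize $f(\cdot,\y^*)$. The auxiliary point $\x^\dagger$ is the bridge, because its lower-bound property makes its $f^R$-value agree with its $f$-value up to the common $\lambda_2\Delta_\psi$ correction, and the sandwich of inequalities then squeezes $\x^*$ itself into being a minimizer of $f^R(\cdot,\y^*,\mu)$; only then does Assumption~\ref{ass6} apply to $\x^*$ directly. By contrast, part~(ii) is comparatively immediate: once the equality $f^R(\x^*,\y^*,\mu)=f(\x^*,\y^*)$ has been obtained at a local saddle point of $f^R$, the local saddle inequalities for $f^R$ and \eqref{eq-re2} transfer verbatim to $f$ on the same $\delta$-ball, without the need for any auxiliary minimizer.
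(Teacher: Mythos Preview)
Your proposal is correct and follows essentially the same route as the paper. The ``easy'' direction is handled identically (Assumption~\ref{ass6} at $(\x^*,\y^*)$ plus \eqref{eq-re2}), and for the forward implication in~(i) the paper introduces exactly your auxiliary minimizer (called $\bar{\x}$ there), applies Assumption~\ref{ass6}(i) at it, and uses the same $\lambda_2\Delta_\psi$ correction; the only cosmetic difference is that the paper phrases this step by contradiction (assuming $f^R(\bar{\x},\y^*,\mu)<f^R(\x^*,\y^*,\mu)$ and deriving $f(\bar{\x},\y^*)<f(\x^*,\y^*)$), whereas you present the same chain of inequalities directly and collapse it to equality.
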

\begin{proof}
Suppose $(\x^*,\y^*)$ is a global (local) saddle point of problem \eqref{ob-r}. By the relations in \eqref{eq-re2} and \eqref{rela2}, it holds
$f^R(\x^*,\y^*,\mu)=f(\x^*,\y^*)$,
$f^R(\x^*,\y,\mu)\geq f(\x^*,\y)$ and
$f^R(\x,\y^*,\mu)\leq f(\x,\y^*)$. Then, $(\x^*,\y^*)$ is a global (local) saddle point of problem \eqref{obb}. Thus we only need to prove that if $(\x^*,\y^*)$ is a saddle point of problem \eqref{obb}, then  it is a
saddle point of \eqref{ob-r}.

Assume on contradiction that $(\x^*,\y^*)$ is a saddle point of problem \eqref{obb}, but it is not a saddle point  of \eqref{ob-r}. Then
\begin{equation}\label{th1-eq1}
\x^*\in\arg\min_{\x\in\X}f(\x,\y^*)\quad\mbox{and}\quad \y^*\in\arg\max_{\y\in\Y}f(\x^*,\y),
\end{equation}
but
either $\x^*$ is not a global minimizer of
$f^R(\x,\y^*,\mu)$ on $\X$ or $\y^*$ is not a global maximizer of
$f^R(\x^*,\y,\mu)$ on $\Y$. As a possible situation,
if $\x^*$ is not a global minimizer of
$f^R(\x,\y^*,\mu)$ on $\X$, then there exists $\bar{\x}\in\arg\min_{\x\in\X}f^R(\x,\y^*,\mu)$ such that
\begin{equation}\label{eq2.1}
f^R(\bar{\x},\y^*,\mu)<f^R(\x^*,\y^*,\mu).
\end{equation}
By \eqref{eq-ass6-1} in Assumption \ref{ass6}, either $g_i(\bar{\x})\geq\alpha\mu$ or $g_i(\bar{\x})\leq0$, which means $\phi_{i}^R(\bar{\x},\mu)=\phi_{i}(\bar{\x})$,
$\forall i\in[\hat{n}]$, and then
\begin{equation}\label{eq2.1-2}
f^R(\bar{\x},\y^*,\mu)=f(\bar{\x},\y^*)-\lambda_2\sum_{j\in[\hat{m}]}\psi_j^R(\y^*,\mu)
+\lambda_2\sum_{j\in[\hat{m}]}\psi_j(\y^*).
\end{equation}
While by $\phi_i^R({\x}^*,\mu)\leq \phi_i({\x}^*)$, $\forall i\in[\hat{n}]$, we obtain
\begin{equation}\label{eq2.1-3}
f^R({\x}^*,\y^*,\mu)\leq f({\x}^*,\y^*)-\lambda_2\sum_{j\in[\hat{m}]}\psi_j^R(\y^*,\mu)+\lambda_2\sum_{j\in[\hat{m}]}\psi_j(\y^*).
\end{equation}

Combining \eqref{eq2.1}-\eqref{eq2.1-3}, we find that
$f(\bar{\x},\y^*)<f(\x^*,\y^*)$, which contradicts to the first relation in \eqref{th1-eq1}. Thus, $\x^*$ is a global minimizer of $f^R(\x,\y^*,\mu)$ on $\X$. And we can verify that $\y^*$ is a global maximizer of $f^R(\x^*,\y,\mu)$ on $\Y$ by a similar way. Therefore, $(\x^*,\y^*)$ is a saddle point of problem \eqref{ob-r}.
\end{proof}
\begin{remark}
Following the proof of Theorem \ref{theorem1}, we see that $f^R(\x^*,\y,\mu)\geq f(\x^*,\y)$ for any $\y\in\Y$ and
$f^R(\x,\y^*,\mu)\leq f(\x,\y^*)$ for any $\x\in\X$ are the key points to guarantee that any (local) saddle point of \eqref{ob-r} is that of \eqref{obb}, while the lower bounds in Assumption \ref{ass6} are the key properties to give the inverse relation on their saddle points. Moreover, by Theorem \ref{theorem1}, we confirm that any saddle point $(\x^*,\y^*)$ of \eqref{obb} satisfies the lower bounds in \eqref{eq-ass6-1} and \eqref{eq-ass6-2} under Assumption \ref{ass6}.
\end{remark}
\subsection{Relations on minimax points}\label{sec4.2}
To establish the equivalent relation on global minimax points between problem \eqref{obb} and problem
 \eqref{ob-r}, we need the following assumption, which will be discussed and verified in Section \ref{section5}.
\begin{assumption}\label{ass2}
For a given $\mu>0$, the following conditions hold.
\begin{itemize}
\item [{\rm (i)}] If $\x^*$ is a global minimizer of $\max_{\y\in\Y}f^R(\x,{\y},\mu)$ on $\X$, then \eqref{eq-ass6-1} holds.
\item [{\rm (ii)}] For any $\bar{\x}\in\X$, if $\y^*$ is a global maximizer of $f^R(\bar{\x},\y,\mu)$ on $\Y$, then \eqref{eq-ass6-2} holds.
\end{itemize}
\end{assumption}

Assumption \ref{ass2} implies that any global minimax point $({\x}^*,{\y}^*)$ of \eqref{ob-r} satisfies the lower bounds in \eqref{eq-ass6-1} and \eqref{eq-ass6-2}.
Then, we can establish the following relations on the global minimax points between problems \eqref{obb} and \eqref{ob-r}.
\begin{theorem}\label{theorem3}
Under Assumption \ref{ass2},
$(\x^*,\y^*)$ is a global minimax point of problem \eqref{ob-r} if and only if
it is a global minimax point of problem \eqref{obb}.
\end{theorem}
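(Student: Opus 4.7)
The strategy is to exploit the one-sided bounds $\phi_i^R\le\phi_i$ and $\psi_j^R\le\psi_j$ from \eqref{eq-re2}, combined with the fact that Assumption \ref{ass2} forces the lower-bound property \eqref{eq-ass6-1}--\eqref{eq-ass6-2}, which via \eqref{rela2} collapses $f^R$ to $f$ at the relevant points. The proof then becomes a chain of sandwich inequalities. I will handle the two implications separately, since Assumption \ref{ass2} is phrased in terms of global minimizers/maximizers of $f^R$, which is automatic for the $(\Rightarrow)$ direction but must be set up via an auxiliary global minimax point in the $(\Leftarrow)$ direction.

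\textbf{($\Rightarrow$) direction.} Let $(\x^*,\y^*)$ be a global minimax point of \eqref{ob-r}. Then $\x^*\in\arg\min_{\x\in\X}\max_{\y\in\Y}f^R(\x,\y,\mu)$ and $\y^*\in\arg\max_{\y\in\Y}f^R(\x^*,\y,\mu)$, so Assumption \ref{ass2}(i) gives \eqref{eq-ass6-1} at $\x^*$ and Assumption \ref{ass2}(ii) (with $\bar{\x}=\x^*$) gives \eqref{eq-ass6-2} at $\y^*$. By \eqref{rela2}, $f^R(\x^*,\y^*,\mu)=f(\x^*,\y^*)$. For the first minimax inequality, fix $\y\in\Y$; since only the lower bound on $g$ at $\x^*$ matters, $f^R(\x^*,\y,\mu)=f(\x^*,\y)-\lambda_2\sum_j(\psi_j^R(\y,\mu)-\psi_j(\y))\ge f(\x^*,\y)$, and the global-maximizer property of $\y^*$ in $f^R(\x^*,\cdot,\mu)$ yields $f(\x^*,\y)\le f^R(\x^*,\y,\mu)\le f^R(\x^*,\y^*,\mu)=f(\x^*,\y^*)$. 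For the second, fix $\x\in\X$ and pick $\y_{\x}^*\in\arg\max_{\y\in\Y}f^R(\x,\y,\mu)$; Assumption \ref{ass2}(ii) at $\bar{\x}=\x$ gives \eqref{eq-ass6-2} at $\y_{\x}^*$, so $f^R(\x,\y_{\x}^*,\mu)\le f(\x,\y_{\x}^*)$ via $\phi_i^R(\x,\mu)\le\phi_i(\x)$, and then $f(\x^*,\y^*)=f^R(\x^*,\y^*,\mu)\le f^R(\x,\y_{\x}^*,\mu)\le f(\x,\y_{\x}^*)\le\max_{\y'\in\Y}f(\x,\y')$.

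\textbf{($\Leftarrow$) direction.} Let $(\x^*,\y^*)$ be a global minimax point of \eqref{obb}. By Proposition \ref{prop12} applied to \eqref{ob-r}, there exists a global minimax point $(\tilde{\x},\tilde{\y})$ of \eqref{ob-r}; by the $(\Rightarrow)$ direction just established, $(\tilde{\x},\tilde{\y})$ is also a global minimax point of \eqref{obb} and the common values satisfy $f^R(\tilde{\x},\tilde{\y},\mu)=f(\tilde{\x},\tilde{\y})=\min_{\x\in\X}\max_{\y\in\Y}f(\x,\y)=f(\x^*,\y^*)$. Pick $\y_R^*\in\arg\max_{\y\in\Y}f^R(\x^*,\y,\mu)$; Assumption \ref{ass2}(ii) at $\bar{\x}=\x^*$ gives \eqref{eq-ass6-2} at $\y_R^*$, so $f^R(\x^*,\y_R^*,\mu)\le f(\x^*,\y_R^*)\le\max_{\y\in\Y}f(\x^*,\y)=f(\x^*,\y^*)$. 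Combined with $\max_{\y\in\Y}f^R(\x^*,\y,\mu)\ge\min_{\x\in\X}\max_{\y\in\Y}f^R(\x,\y,\mu)=f(\x^*,\y^*)$, every inequality is an equality, so $\x^*\in\arg\min_{\x\in\X}\max_{\y\in\Y}f^R(\x,\y,\mu)$ and $\max_{\y\in\Y}f^R(\x^*,\y,\mu)=f(\x^*,\y^*)$. Now Assumption \ref{ass2}(i) forces \eqref{eq-ass6-1} at $\x^*$, so by \eqref{rela2} the $\phi$-terms at $\x^*$ agree, and then $f^R(\x^*,\y^*,\mu)=f(\x^*,\y^*)-\lambda_2\sum_j(\psi_j^R(\y^*,\mu)-\psi_j(\y^*))\ge f(\x^*,\y^*)$. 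Since $f^R(\x^*,\y^*,\mu)\le\max_{\y\in\Y}f^R(\x^*,\y,\mu)=f(\x^*,\y^*)$ as well, equality holds, so $\y^*\in\arg\max_{\y\in\Y}f^R(\x^*,\y,\mu)$. Combined with $\x^*$'s outer minimization property, $(\x^*,\y^*)$ is a global minimax point of \eqref{ob-r}.

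\textbf{Main obstacle.} The nontrivial step is the reverse direction: Assumption \ref{ass2} speaks only about $f^R$-optimizers, so we cannot directly extract the lower-bound property from the hypothesis that $(\x^*,\y^*)$ is a global minimax point of the original \emph{discontinuous} $f$. The trick is to bring in an auxiliary global minimax point of $f^R$ via Proposition \ref{prop12}, use the already-proven forward direction to equate $\min_{\x}\max_{\y}f^R=\min_{\x}\max_{\y}f$, and then run a single sandwich of inequalities that forces $\x^*$ to also be an outer minimizer for $f^R$; only after this do Assumptions \ref{ass2}(i) and (ii) activate and allow the conclusion for $\y^*$.
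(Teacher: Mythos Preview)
Your proof is correct and follows essentially the same strategy as the paper: both directions hinge on the one-sided inequalities \eqref{eq-re2}, the collapse \eqref{rela2} at points satisfying the lower bounds of Assumption \ref{ass2}, and (for the converse) the introduction of an auxiliary global minimax point of \eqref{ob-r} together with the already-proven forward direction.

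There is one minor organizational difference worth noting in the $(\Leftarrow)$ direction. The paper first establishes $\y^*\in\arg\max_{\y\in\Y}f^R(\x^*,\y,\mu)$ by a Theorem~\ref{theorem1}-style contradiction argument (using only Assumption~\ref{ass2}(ii)), and only afterward uses the auxiliary minimax point $(\bar{\x},\bar{\y})$ to obtain the outer-minimization inequality for $\x^*$. You reverse the order: you first use the auxiliary point and a sandwich to get $\x^*\in\arg\min_{\x\in\X}\max_{\y\in\Y}f^R(\x,\y,\mu)$, which activates Assumption~\ref{ass2}(i), and then deduce the inner-maximization property for $\y^*$. Both orderings work; your route makes explicit use of Assumption~\ref{ass2}(i) in the converse (beyond its implicit use via the forward direction), whereas the paper's converse argument invokes only part~(ii) directly. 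Neither variant is materially simpler than the other.
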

\begin{proof}
Let $({\x}^*,{\y}^*)$ be a global minimax point of problem \eqref{ob-r}, i.e.
\begin{equation}\label{eq-e15-o}
f^R(\x^*,\y,\mu)\leq f^R(\x^*,\y^*,\mu)\leq \max_{\y'\in\Y}f^R(\x,\y',\mu),\quad \forall\, \x\in\X,\,\y\in\Y,
\end{equation}
which implies $f^R(\x^*,\y^*,\mu)=\max_{\y'\in\Y}f^R(\x^*,\y',\mu)\leq \max_{\y'\in\Y}f^R(\x,\y',\mu)$.
Then, Assumption \ref{ass2} gives $\phi_{i}(\x^*)=\phi_{i}^R(\x^*,\mu)$ and $\psi_{j}(\y^*)=\psi_{j}^R(\y^*,\mu)$, and we further have
\begin{equation}\label{eq2.11}
f^R(\x^*,\y^*,\mu)=f(\x^*,\y^*).
\end{equation}
Invoking \eqref{eq-re2}, we have
\begin{equation}\label{eq2.7}
f(\x^*,\y)=c(\x^*,\y)+\lambda_1\sum_{i\in[\hat{n}]}\phi_{i}(\x^*)-\lambda_2\sum_{j\in[\hat{m}]}\psi_{j}(\y)
\leq f^R(\x^*,\y,\mu), \quad \forall \,  \y\in\Y.
\end{equation}
For any $\x\in\X$, denote $\y_{\x}$ a maximizer of $f^R(\x,\y,\mu)$ on $\Y$. Recalling Assumption \ref{ass2}, we have that $\y_{\x}$ satisfies the lower bound property in \eqref{eq-ass6-2} and $\psi_j(\y_{\x})=\psi_j^R(\y_{\x},\mu)$. Together it with \eqref{eq-re2}, we have that
\begin{equation}\label{eq2.8}
\begin{aligned}
\max_{\y'\in\Y}f^R(\x,\y',\mu)=&c(\x,\y_{\x})+\lambda_1\sum_{i\in[\hat{n}]}
\phi_i^R(\x,\mu)-\lambda_2\sum_{j\in[\hat{m}]}\psi_j^R(\y_{\x},\mu)\\
\leq&c(\x,\y_{\x})+\lambda_1\sum_{i\in[\hat{n}]}\phi_i(\x)-\lambda_2\sum_{j\in[\hat{m}]}\psi_j(\y_{\x})\leq\max_{\y'\in\Y}f(\x,\y').
\end{aligned}\end{equation}
Combining \eqref{eq-e15-o}-\eqref{eq2.8}, we confirm that $(\x^*,\y^*)$ is a global minimax point of problem \eqref{obb}.

Conversely, let $(\x^*,\y^*)$ be a global minimax point of problem \eqref{obb}, i.e.
\begin{equation}\label{eq-e15}
f(\x^*,\y)\leq f(\x^*,\y^*)\leq \max_{\y'\in\Y}f(\x,\y'), \quad \forall\, \x\in\X,\,\y\in\Y.
\end{equation}
By the first inequality in \eqref{eq-e15}, similar to the proof of Theorem \ref{theorem1}, we have
\begin{equation}\label{eq2.14}
f^R(\x^*,\y,\mu)\leq f^R(\x^*,\y^*,\mu),\quad\forall\,  \y\in\Y.
\end{equation}

Next, recalling Assumption \ref{ass2}, ${\y}^*\in\arg\max_{\y'\in\Y}f^R(\x^*,\y',\mu)$ implies
\begin{equation}\label{eq-16}
\psi_j(\y^*)=\psi_j^R(\y^*,\mu), \quad\forall\, j\in[\hat{m}],
\end{equation}
which together with $\phi_i^R(\x^*,\mu)\leq \phi_i(\x^*)$, $\forall i\in[\hat{n}]$ gives
\begin{equation}\label{eq2.20}
f^R(\x^*,\y^*,\mu)\leq f(\x^*,\y^*).
\end{equation}

Denote $(\bar{\x},\bar{\y})$ a global minimax point of \eqref{ob-r}, then
\begin{equation}\label{eq2.16}
f^R(\bar{\x},\y,\mu)\leq f^R(\bar{\x},\bar{\y},\mu)\leq\max_{\y'\in\Y} f^R(\x,\y',\mu),\quad \forall\, \x\in\X,\,\y\in\Y,
\end{equation}
by the first part of this theorem, which implies
\begin{equation}\label{eq2.18}
f(\bar{\x},\y)\leq f(\bar{\x},\bar{\y})\leq\max_{\y'\in\Y} f(\x,\y'),\quad \forall\, \x\in\X,\,\y\in\Y.
\end{equation}

By Assumption \ref{ass2}, we further have
\begin{equation}\label{eq-e13}
\phi_i^R(\bar{\x},\mu)=\phi_i(\bar{\x}),\,\forall i\in[\hat{n}],\,\,
\psi_j^R(\bar{\y},\mu)=\psi_j(\bar{\y}),\,\forall j\in[\hat{m}]\,\,\mbox{and}\,\,
f^R(\bar{\x},\bar{\y},\mu)=f(\bar{\x},\bar{\y}).
\end{equation}

In the second inequality of \eqref{eq-e15}, let $\x=\bar{\x}$ and use \eqref{eq2.18}, then we obtain
$f(\x^*,\y^*)\leq f(\bar{\x},\bar{\y})$,
which together with \eqref{eq2.20} and the third inequality in \eqref{eq-e13} gives
\begin{equation}\label{eq-e14}
f^R(\x^*,\y^*,\mu)\leq f^R(\bar{\x},\bar{\y},\mu).
\end{equation}

Combining \eqref{eq-e14} and
the second inequality in \eqref{eq2.16} gives
$$f^R(\x^*,\y^*,{\mu})\leq\max_{\y'\in\Y}f^R(\x,\y',\mu), \quad\forall\, \x\in\X,$$
which together with \eqref{eq2.14} guarantees that $(\x^*,\y^*)$ is a global minimax point of problem \eqref{ob-r}.
\end{proof}
\subsection{{Smoothing functions to a nonsmooth convex-concave function $c$}}\label{sec-smoothing}
If the function $c$ in problem \eqref{ob-r} is nonsmooth, the smoothing approximation of it is often needed in the algorithms \cite{chen-MP,Burke}. In what follows, we introduce a smoothing function of $c$ defined in \cite{chen-MP}.
\begin{definition}\label{defn2}
We call $\tilde{c}:\X\times\Y\times(0,1]\rightarrow\mathbb{R}$ a smoothing function of a nonsmooth function $c$ on $\X\times\Y$, if $\tilde{c}(\cdot,\cdot,\varepsilon)$ is continuously differentiable on
$\X\times\Y$ for any fixed $\varepsilon\in(0,1]$ and for any $(\bar{\x},\bar{\y})\in\X\times\Y$, it satisfies
\begin{equation}\label{eq-defn-s}
\lim\nolimits_{\x\to \bar{\x}, \y\to \bar{\y}, \varepsilon\downarrow0}\tilde{c}(\x,{\y},\varepsilon)=c(\bar{\x},\bar{\y}).
\end{equation}
\end{definition}

The gradient consistence between the Clarke subgradient of the nonsmooth function and the gradients associated with its smoothing function sequence is important for the efficiency of the smoothing method, i.e. for any $\bar{\x}\in\X$ and $\bar{\y}\in\Y$,
\begin{equation}\label{eqs-1}
\{\lim\nolimits_{\x\to \bar{\x}, \y\to \bar{\y}, \varepsilon\downarrow0}\nabla_{(\x,\y)}\tilde{c}({\x},{\y},\varepsilon)\}\subseteq\partial c(\bar{\x},\bar{\y}).
\end{equation}
The partial gradient consistences with respect to the update of two variables are often necessary
for the algorithm analysis of the min-max problems, i.e. for any $\bar{\x}\in\X$ and $\bar{\y}\in\Y$,
\begin{eqnarray}
&\left\{\lim_{\x\rightarrow \bar{\x},\y\rightarrow \bar{\y},\varepsilon\downarrow0}\nabla_{\x}\tilde{c}(\x,\y,\varepsilon)\right\}\subseteq\partial_{\x}c(\bar{\x},\bar{\y}),\label{s2}\\
&\left\{\lim_{\x\rightarrow\bar{\x},\y\rightarrow \bar{\y},\varepsilon\downarrow0}\nabla_{\y}\tilde{c}(\x,\y,\varepsilon)\right\}\subseteq\partial_{\y}c(\bar{\x},\bar{\y}).\label{s3}
\end{eqnarray}

However,
neither $\partial c(\x,\y)$ nor $\partial_{
\x} c(\x,\y)\times\partial_{\y} c(\x,\y)$ are contained in each other generally \cite{Clarke}.
See \cite[Example 2.5.2]{Clarke}. When $c$ is Clarke regular with respect to $(\x,\y)$, by \cite[Proposition 2.3.15]{Clarke}, it holds
\begin{equation}\label{s1}
\partial c(\x,\y)\subseteq\partial_{\x} c(\x,\y)\times\partial_{\y} c(\x,\y).
\end{equation}
In what follows, we will show that \eqref{s1} holds
for any convex-concave function $c$, though the convexity-concavity of $c$ cannot give the regularity of it. For example, $c(\x,\y)=|\x|-|\y|$ is convex-concave on $\R\times\R$, but not Clarke regular in $(\x,\y)$ at $(1,0)$.
\begin{proposition} \label{prop8} For any convex-concave function $c$, \eqref{s1} holds for any $\x\in\R^n$ and $\y\in\R^m$.
\end{proposition}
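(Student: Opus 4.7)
The plan is to combine the standard Rademacher-type description of the Clarke subdifferential with the outer semicontinuity of the convex (respectively, concave) subdifferential under partial fixings. Since $c$ is locally Lipschitz,
$$
\partial c(\x,\y)=\mathrm{co}\bigl\{\lim_{k\to\infty}\nabla c(\x_k,\y_k):(\x_k,\y_k)\to(\x,\y),\ c\text{ is Fr\'echet differentiable at each }(\x_k,\y_k)\bigr\}.
$$
Noting that $\partial_\x c(\x,\y)$ coincides with the convex subdifferential of the convex function $c(\cdot,\y)$ at $\x$ and $\partial_\y c(\x,\y)$ coincides with the superdifferential of the concave function $c(\x,\cdot)$ at $\y$, both sets are convex. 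Hence it suffices to show that every gradient limit $(\zeta_\x^*,\zeta_\y^*)$ entering the convex hull already lies in $\partial_\x c(\x,\y)\times\partial_\y c(\x,\y)$; the product structure is then preserved by taking convex combinations.

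First I would exploit the convex-concave structure at every differentiability point. If $c$ is Fr\'echet differentiable at $(\x_k,\y_k)$, the partial derivatives $\nabla_\x c(\x_k,\y_k)$ and $\nabla_\y c(\x_k,\y_k)$ exist in the classical sense; convexity of $c(\cdot,\y_k)$ then promotes $\nabla_\x c(\x_k,\y_k)$ to a \emph{global} convex subgradient, while concavity of $c(\x_k,\cdot)$ promotes $\nabla_\y c(\x_k,\y_k)$ to a \emph{global} supergradient, producing
$$
c(\x',\y_k)\geq c(\x_k,\y_k)+\langle\nabla_\x c(\x_k,\y_k),\x'-\x_k\rangle,\quad c(\x_k,\y')\leq c(\x_k,\y_k)+\langle\nabla_\y c(\x_k,\y_k),\y'-\y_k\rangle,
$$
for all $\x'\in\R^n$ and $\y'\in\R^m$.

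Next I would let $k\to\infty$ and use the Lipschitz continuity of $c$ to pass to the limit in both inequalities; this gives $c(\x',\y)\geq c(\x,\y)+\langle\zeta_\x^*,\x'-\x\rangle$ and $c(\x,\y')\leq c(\x,\y)+\langle\zeta_\y^*,\y'-\y\rangle$ for every $\x',\y'$, which are exactly the defining inequalities of $\zeta_\x^*\in\partial_\x c(\x,\y)$ and $\zeta_\y^*\in\partial_\y c(\x,\y)$. Together with the convex-set closure argument from the first paragraph, this yields the desired inclusion.

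The step I expect to be most delicate is the upgrade from joint Fr\'echet differentiability at $(\x_k,\y_k)$ to a \emph{global} partial subgradient (respectively, supergradient) inequality, rather than only a local or directional statement. Partial convexity and concavity are essential there; without them, Clarke regularity in general fails for convex-concave $c$ (as illustrated in the excerpt by $c(\x,\y)=|\x|-|\y|$ near $(1,0)$), and the claimed inclusion would break.
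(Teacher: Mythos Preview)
Your argument is correct and gives a self-contained proof. You use the Rademacher description of the Clarke subdifferential, push the partial gradients at differentiability points into global convex/concave inequalities, pass to the limit by continuity, and close under convex hulls; each step is sound, and the ``delicate'' upgrade you flag is in fact routine once you use that a convex (resp.\ concave) function differentiable at a point has that gradient as its unique subgradient (resp.\ supergradient), so the inequality is automatically global.

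The paper takes a shorter, citation-based route: it invokes Clarke's partial-convexity result (Proposition~2.5.3 in \cite{Clarke}) directly to get $\xi\in\partial_\x c(\x,\y)$, then uses $\partial(-c)=-\partial c$ to flip the sign and apply the same result to $-c$ in the $\y$-variable. What you have done is essentially an explicit proof of that Clarke proposition in this special setting. Your approach buys transparency and avoids a black-box citation; the paper's approach buys brevity by leaning on an off-the-shelf result.
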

\begin{proof}
Let $(\xi,\eta)\in\partial c(\x,\y)$. We will prove that $\xi\in \partial_{\x} c(\x,\y)$, $\eta\in \partial_{\y} c(\x,\y)$.

Since $c(\cdot,\y)$ is convex on $\R^n$ for any $\y$,
by \cite[Proposition 2.5.3]{Clarke}, it has
$\xi\in \partial_{\x} c(\x,\y)$.
Inspired by the result in \cite[Proposition 2.3.1]{Clarke}, $(-\xi,-\eta)\in\partial(-c(\x,\y))$.
Using the concavity of $c(\x,\cdot)$ on $\R^m$ for any $\x\in\R^n$, $-c(\x,\y)$ is convex with respect to $\y$ and then $-\eta\in\partial_{\y}(-c(\x,\y))=-\partial_{\y}c(\x,\y)$, which uses \cite[Proposition 2.5.3]{Clarke} again. Thus, $\eta\in\partial_{\y}c(\x,\y)$.
\end{proof}

For a nonsmooth function $c$, we can construct a smoothing function of $c$ by convolution \cite{chen-MP,Qi-Chen,Rock-Wets} as follows
\begin{equation}\label{smoothing}
\tilde{c}(\z,\varepsilon)=\int_{\R^{n+m}}c(\z-\u)\psi_{\varepsilon}(\u)d\u,
\end{equation}
where $\z:=(\x,\y)$,  $\psi_{\varepsilon}:\R^{n+m}\rightarrow\R_+$ is a sequence of bounded, measurable functions satisfying {$\int_{\R^{n+m}}\psi_{\varepsilon}(\u)d\u=1$} and $\lim_{\varepsilon\downarrow0}B^{\varepsilon}=\{\bm{0}\}$ with $B^{\varepsilon}:=\{\u:\psi_{\varepsilon}(\u)>0\}$.
\begin{proposition}\label{prop-c}
Let $\tilde{c}:\R^n\times\R^m\times(0,1]\rightarrow\mathbb{R}$ be defined as in \eqref{smoothing}. Then  $\tilde{c}$ is
a smoothing function of $c$ on $\X\times\Y$ and satisfies the following properties:
\begin{itemize}
\item [{\rm (i)}] for any $\bar{\x}\in\X$ and $\bar{\y}\in\Y$, \eqref{eqs-1} and \eqref{s2}-\eqref{s3} hold;
    \item [{\rm (ii)}] for any $\varepsilon>0$, $\tilde{c}(\x,\y,\varepsilon)$ is convex in $\x\in\R^n$ and concave in $\y\in\R^m$.
\end{itemize}
\end{proposition}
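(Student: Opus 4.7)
The plan is to establish part (ii) first and then lean on it when handling part (i). For (ii), fix $\varepsilon>0$ and $(\u_1,\u_2)\in\R^{n+m}$. The map $\x\mapsto c(\x-\u_1,\y-\u_2)$ is convex (for every choice of $\y,\u_1,\u_2$) because $c(\cdot,\cdot)$ is convex in its first argument. Since $\psi_{\varepsilon}\geq 0$ and $\int\psi_{\varepsilon}\,d\u=1$, the integral $\tilde c(\x,\y,\varepsilon)$ is a convex combination (in the integral sense) of convex functions of $\x$, hence convex in $\x$; concavity in $\y$ follows symmetrically. This is the cleanest ingredient and I would dispose of it at the start.

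For (i), I would first verify the two requirements of Definition \ref{defn2}. Using the change of variables $\v=\z-\u$ with $\z=(\x,\y)$, one has $\tilde c(\z,\varepsilon)=\int c(\v)\psi_{\varepsilon}(\z-\v)d\v$; taking $\psi_{\varepsilon}$ to be a smooth mollifier (as in the standard constructions of \cite{chen-MP,Rock-Wets}), differentiation under the integral sign and dominated convergence (using the Lipschitz bound on $c$) give continuity of $\nabla_{\z}\tilde c(\cdot,\varepsilon)$. Equivalently, since $c$ is Lipschitz and hence differentiable a.e.\ by Rademacher's theorem, I can also write
\[
\nabla_{\z}\tilde c(\z,\varepsilon)=\int_{\R^{n+m}}\nabla c(\z-\u)\psi_{\varepsilon}(\u)d\u.
\]
Condition \eqref{eq-defn-s} then follows from the continuity of $c$ together with $B^{\varepsilon}\to\{\0\}$ as $\varepsilon\downarrow 0$: given $\delta>0$, continuity yields $|c(\z-\u)-c(\bar\z)|<\delta$ uniformly for $\u\in B^{\varepsilon}$ and $\z$ close to $\bar\z$ once $\varepsilon$ is small, so the identity $\int\psi_{\varepsilon}=1$ gives the claim.

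For the joint consistency \eqref{eqs-1}, suppose $(\z_k,\varepsilon_k)\to(\bar\z,0)$ and $\nabla_{\z}\tilde c(\z_k,\varepsilon_k)\to\xi$. For a.e.\ $\u$, $\nabla c(\z_k-\u)\in\partial c(\z_k-\u)$, and the Clarke subdifferential of the Lipschitz function $c$ is upper semicontinuous: for any $\delta>0$ and $k$ sufficiently large, $\partial c(\z_k-\u)\subseteq \partial c(\bar\z)+\delta\mathbf{B}$ for every $\u\in B^{\varepsilon_k}$. Since the set $\partial c(\bar\z)+\delta\mathbf{B}$ is closed and convex and $\nabla_{\z}\tilde c(\z_k,\varepsilon_k)$ is a weighted average of such subgradients with weights $\psi_{\varepsilon_k}\geq 0$ integrating to $1$, the average lies in that set; sending $\delta\downarrow 0$ gives $\xi\in\partial c(\bar\z)$. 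The partial statements \eqref{s2}-\eqref{s3} follow by the same averaging argument applied to $\nabla_{\x}\tilde c$ and $\nabla_{\y}\tilde c$: by part (ii), $\tilde c(\cdot,\y,\varepsilon)$ is convex in $\x$, so $\nabla_{\x}\tilde c$ is its convex gradient and coincides with $\int \nabla_{\x}c(\x-\u_1,\y-\u_2)\psi_{\varepsilon}(\u)d\u$, whose integrand lies in $\partial_{\x}c(\x-\u_1,\y-\u_2)$; upper semicontinuity of $\partial_{\x}c$ (and likewise $\partial_{\y}c$) in the joint argument then yields the desired inclusion.

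The main obstacle will be the technicalities underlying the last paragraph: justifying that the partial Clarke subdifferentials $\partial_{\x}c$ and $\partial_{\y}c$ are upper semicontinuous as multifunctions of the joint argument $(\x,\y)$ (not merely in the variable of differentiation), and validly exchanging the partial differentiation with the $(\u_1,\u_2)$-integration when the mollification is performed jointly. The convex-in-$\x$/concave-in-$\y$ structure from part (ii), together with Proposition \ref{prop8}, is the natural tool to handle this, because it reduces $\partial_{\x}c(\cdot,\y)$ to a convex subdifferential of a family of convex functions depending continuously on the parameter $\y$, for which the desired joint upper semicontinuity is standard.
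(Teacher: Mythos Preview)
Your argument is correct and uses the same core ingredients as the paper --- integral averaging over the mollifier, upper semicontinuity of the Clarke subdifferential, and the direct convexity/concavity computation for (ii). The paper's proof of (ii) is identical to yours.

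The one noteworthy difference is how \eqref{s2}--\eqref{s3} are obtained. The paper does not run a separate averaging argument for the partial gradients. Instead, it first establishes the joint consistency \eqref{eqs-1} (citing \cite[Theorem~9.67]{Rock-Wets} rather than reproving it) and then derives \eqref{s2}--\eqref{s3} in one line from Proposition~\ref{prop8}: any limit $(\xi,\eta)$ of $\nabla_{(\x,\y)}\tilde c$ lies in $\partial c(\bar\x,\bar\y)\subseteq\partial_\x c(\bar\x,\bar\y)\times\partial_\y c(\bar\x,\bar\y)$, and along any sequence where $\nabla_\x\tilde c$ converges one can pass to a subsequence (Lipschitz bound) along which the full gradient converges, so the $\x$-limit lands in $\partial_\x c$. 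This completely bypasses the obstacle you flagged --- you never need to establish joint upper semicontinuity of $\partial_\x c$ in $(\x,\y)$ directly. Your direct route works too (the joint upper semicontinuity of $\partial_\x c$ for convex-concave $c$ follows by passing to the limit in the subgradient inequality), but the paper's projection via Proposition~\ref{prop8} is shorter and is precisely why that proposition was stated just before.
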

\begin{proof}
From  \cite[Theorem 9.67]{Rock-Wets}, $\tilde{c}$ in \eqref{smoothing} is a smoothing function of $c$ on $\X\times\Y$ and satisfies $\{\lim\nolimits_{\x\to \bar{\x}, \y\to \bar{\y}, \varepsilon\downarrow0}\nabla\tilde{c}({\x},{\y},\varepsilon)\}\subseteq\partial c(\bar{\x},\bar{\y})$ for any $\bar{\x}\in\X$ and $\bar{\y}\in\Y$.
Recalling the convexity-concavity of $c$, by Proposition \ref{prop8}, \eqref{s2}-\eqref{s3} hold.

In what follows, we first verify that $\tilde{c}(\x,\y,\varepsilon)$ is convex in $\x\in\R^n$ for any $\y\in\R^m$ and $\varepsilon>0$.  For any $\bar{\x},\hat{\x}\in\X$ and $\eta\in[0,1]$, observe that
$$\begin{aligned}
&\tilde{c}(\eta\bar{\x}+(1-\eta)\hat{\x},\y,\varepsilon)=\int_{\R^{n+m}}
c(\eta\bar{\x}+(1-\eta)\hat{\x}-\v,\y-\w)\psi_{\varepsilon}(\v,\w)d\u\\
\leq&\eta\int_{\R^{n+m}}
c(\bar{\x}-\v,\y-\w)\psi_{\varepsilon}(\u)d\u+
(1-\eta)\int_{\R^{n+m}}
c(\hat{\x}-\v,\y-\w)\psi_{\varepsilon}(\u)d\u\\
=&\eta\tilde{c}(\bar{\x},\y,\varepsilon)+(1-\eta)\tilde{c}(\hat{\x},\y,\varepsilon),
\end{aligned}$$
where $\u=(\v,\w)\in \R^{n+m}$, the inequality uses the convexity of $c(\cdot,\y)$ and the nonnegativity of $\psi_{\varepsilon}$ on $\R^{n+m}$. Thus,
$\tilde{c}(\x,\y,\varepsilon)$ is convex in $\x\in\R^n$ for any $\y\in\R^m$ and $\varepsilon>0$. By similar calculation, $\tilde{c}(\x,\y,\varepsilon)$ is concave in $\y\in\R^m$ for any $\x\in\R^n$ and $\varepsilon>0$.
\end{proof}

Then, denote the smoothing model of \eqref{ob-r} by
\begin{equation}\label{ob-cr}
\min_{\x\in\X}\max_{\y\in\Y} \tilde{f}^R(\x,\y,\mu,\varepsilon):=\tilde{c}(\x,\y,\varepsilon)+\lambda_1\sum_{i=1}^{\hat{n}} {\phi}_{i}^{R}(\x_i,\mu)
-\lambda_2\sum_{j=1}^{\hat{m}}{\psi}_{j}^{R}(\y_j,\mu),
\end{equation}
where $\tilde{c}(\x,\y,\varepsilon)$ is a smoothing function of $c$. Since $\tilde{c}$ is a convex-concave function, it always has a saddle point over $\X\times \Y$. Following the results in previous sections, problem (\ref{ob-cr}) has a local saddle point and
a global minimax point for some $\mu>0$.

\section{A particular case  of problem \eqref{obb}}\label{section5}
In Section 4, we show the consistency on the saddle point sets and the inclusion on the local saddle point sets of problems \eqref{obb} and \eqref{ob-r} under Assumption \ref{ass6}, and the consistence on their global minimax point sets under Assumption \ref{ass2}. In this section, we will verify that
Assumptions \ref{ass6} and \ref{ass2} hold for a particular case of problem \eqref{obb}. Moreover, with the specific structure of $r$ under Assumption \ref{ass7} or Assumption \ref{ass8}, we can also establish the relations between the first order and second order stationary points of \eqref{ob-r} and
the local saddle points for the particular case  of \eqref{obb}.

In this section, we consider the following problem
\begin{equation}\label{obb-3-1}
\begin{aligned}
\min_{\x\in\X}\max_{\y\in\Y} \,f(\x,\y):=c(\x,\y)
+&\lambda_1\sum_{i=1}^n\left( (g_{i}(\x_i)_+)^0+(g_{n+i}(\x_i)_+)^0\right)\\
-&\lambda_2\sum_{j=1}^m\left( (h_{j}(\y_j)_+)^0+(h_{m+j}(\y_j)_+)^0\right).
\end{aligned}\end{equation}

Problem \eqref{obb-3-1} is a case of  \eqref{obb} with  $\hat{n}=2n, \hat{m}=2m$ and
\begin{equation}\label{2n}
g_i(\x)=g_i(\x_i),\,\,  g_{n+i}(\x)= g_{n+i}(\x_i), \,\, i\in [n],
\end{equation}
\begin{equation}\label{2m}
 h_j(\y)=h_j(\y_j),\,\,  h_{m+j}(\y)= h_{m+j}(\y_j),\,\, j\in [m].
\end{equation}

Moreover, we  assume that $\X$ and $\Y$ satisfy the Slater's condition, i.e.
\begin{equation}\label{ass-X}
{\rm int}(\X)\neq\emptyset\quad\mbox{and}\quad {\rm int}(\Y)\neq\emptyset,
\end{equation}
and have the following structures
\begin{equation}\label{eq-cons-1}
{\mathcal{X}}:=\tilde{\mathcal{X}}\cap\bar{\mathcal{X}}\quad\mbox{and}\quad {\mathcal{Y}}:=\tilde{\mathcal{Y}}\cap\bar{\mathcal{Y}},
\end{equation}
where
\begin{equation}\label{eq-cons}
\tilde{\X}=\{{\x}\in \R^n:\underline{\u}\leq \x\leq \overline{{\u}}\},\quad \tilde{{\Y}}=\{{\y}\in \R^m:\underline{\v}\leq \y\leq \overline{\v}\}
\end{equation}
with $\underline{\u}=(\underline{{\u}}_1,\ldots,\underline{{\u}}_n)^\top$, $\overline{{\u}}=(\overline{\u}_1,\ldots,\overline{\u}_n)^\top\in {\R}^n$, $\underline{{\v}}=(\underline{{\v}}_1,\ldots,\underline{{\v}}_m)^\top$, $\overline{{\v}}=(\overline{\v}_1,\ldots,\overline{\v}_m)^\top\in {\R}^m$, $\underline{\u}<\overline{{\u}}$,
$\underline{{\v}}<\overline{{\v}}$, and
\begin{equation}\label{eqx2}
\bar{\X}=\{{\x}\in \R^n:\,u_t(\x)\leq0,\,t\in[\bar{n}]\},\quad \bar{{\Y}}=\{{\y}\in \R^m:v_s(\y)\leq0,\,s\in[\bar{m}]\}
\end{equation}
with Lipschitz continuous convex functions $u_t:\mathbb{R}^n\rightarrow\mathbb{R}$ for $t\in[\bar{n}]$ and
$v_s:\mathbb{R}^m\rightarrow\mathbb{R}$ for $s\in[\bar{m}]$.

Denote
$$\mathcal{T}_0(\x)=\{t\in[\bar{n}]:u_t(\x)=0\},\quad \mathcal{S}_0(\y)=\{s\in[\bar{m}]:v_s(\y)=0\}.$$

Since ${\rm int}(\bar{\mathcal{X}})\supseteq{\rm int}(\mathcal{X})\neq\emptyset$, by \cite[Theorems 6.8.2 and 6.8.3]{Pang-book2} and \cite[Proposition 5.3.1 and Remark 5.3.2]{H-Lema}, we have
\begin{equation}\label{normal}
\mathcal{N}_{\bar{\mathcal{X}}}(\x)=\sum\nolimits_{t\in \mathcal{T}_0(\x)}[0,+\infty)\partial u_t(\x), \quad \forall\, \x\in \bar{\mathcal{X}}.
\end{equation}
Using ${\rm int}(\mathcal{X})\neq\emptyset$ again, we have
\begin{equation}\label{eq3-3}
\mathcal{N}_{\mathcal{X}}(\x)=\mathcal{N}_{\tilde{\mathcal{X}}}(\x)+\mathcal{N}_{\bar{\mathcal{X}}}(\x).
\end{equation}
Similar calculation can be put forward to $\Y$.

Denote
$$\phi_{i}(\x_i)=(g_{i}(\x_i)_+)^0+(g_{n+i}(\x_i)_+)^0,\quad
\psi_{j}(\y_j)=(h_{j}(\y_j)_+)^0+(h_{m+j}(\y_j)_+)^0$$
and define their continuous relaxations by
\begin{equation}\label{eq-phi}
{\phi}_{i}^{R}(\x_i,\mu)=r(g_{i}(\x_i),\mu)+r(g_{n+i}(\x_i),\mu),\quad
{\psi}_{j}^{R}(\y_j,\mu)=r(h_{j}(\y_j),\mu)+r(h_{m+j}(\y_j),\mu)
\end{equation}
with $r$ in \eqref{c-re} and $\mu>0$. We consider the continuous relaxation of \eqref{obb-3-1} as follows
\begin{equation}\label{obb-3r}
\min_{\x\in\X}\max_{\y\in\Y} \,f^R(\x,\y,\mu):=c(\x,\y)+\lambda_1\sum_{i=1}^n {\phi}_{i}^{R}(\x_i,\mu)
-\lambda_2\sum_{j=1}^m{\psi}_{j}^{R}(\y_j,\mu).
\end{equation}

We impose the following assumption on functions $g$ and $h$ in (\ref{2n})-(\ref{2m})
 related to the sets
$\tilde{\X}$ and $\bar{\X}$ in (\ref{eq-cons})-(\ref{eqx2}).

\begin{assumption}\label{ass4}
There exist positive numbers $\tau$ and $\sigma$ such that the following conditions hold.
\begin{itemize}
\item [{\rm (i)}] For any $\x\in\X$, if there exists $i\in[{n}]$ such that $g_{i}(\x_{i})\in (0, \tau)$  (or $g_{n+i}(\x_{i})\in (0, \tau)$), then $\x_{i}\in{\rm int}(\tilde{\X}_{i})$ and
   \begin{align}
     &g_{n+i}(\x_i)\not\in[0,\tau] \;  (\mbox{or}\;g_{i}(\x_i)\not\in[0,\tau]), \; |g_{i}'(\x_{i})|\geq\sigma \; (\mbox{or}\;|g_{n+i}'(\x_{i})|\geq\sigma), \label{ass2-1}\\
     &\xi_{i}^t(\x)g_{i}'(\x_{i})\geq0 \; (\mbox{or}\;\xi_{i}^t(\x)g_{n+i}'(\x_{i})\geq0),\quad  \forall \xi^t(\x)\in\partial u_t(\x),\,t\in\mathcal{T}_0(\x).\label{ass2-3}
   \end{align}
\item [{\rm (ii)}] For any $\y\in\Y$, if there exists ${j}\in[m]$  such that $h_{j}(\y_{j}) \in (0, \tau)$ (or $h_{m+j}(\y_{j})\in (0, \tau)$), then $\y_{j}\in{\rm int}(\tilde{\Y}_{j})$ and
    \begin{align}
     &   h_{m+j}(\y_{j})\not\in[0,\tau] \;(\mbox{or}\;h_{j}(\y_{j})\not\in[0,\tau]),\; |h_{j}'(\y_{j})|\geq\sigma \; (\mbox{or}\;| h_{m+j}'(\y_{j})|\geq\sigma),\label{ass2-2}\\
     &\eta_{j}^s(\y)h_{j}'(\y_{j})\geq 0 \; (\mbox{or}\;\eta_{j}^s(\y)h_{m+j}'(\y_{j})\geq0), \quad \forall \eta^s(\y)\in\partial v_s(\y),\,s\in\mathcal{S}_0(\y).\label{ass2-4}
   \end{align}
\end{itemize}
\end{assumption}
Under Assumption \ref{ass4}-(i), if there exists $i\in[{n}]$ such that $g_{i}(\x_{i})\in (0, \tau)$  (or $g_{n+i}(\x_{i})\in (0, \tau)$), by \eqref{normal} and \eqref{eq3-3}, then $[\mathcal{N}_{{\mathcal{X}}}(\x)]_i=0$ or $[\mathcal{N}_{{\mathcal{X}}}(\x)]_{{i}}=[\mathcal{N}_{\bar{\mathcal{X}}}(\x)]_{{i}}=[\sum\nolimits_{t\in \mathcal{T}_0(\x)}[0,+\infty)\partial u_t(\x)]_{{i}}$, which together with \eqref{ass2-3} implies that
\begin{equation}\label{eq3-1}
\xi_{{i}}(\x)g_{{i}}'(\x_{{i}})\geq0\,(\mbox{or}\;\xi_{{i}}(\x)g_{n+{i}}'(\x_{{i}})\geq0),\quad \forall\,  \xi(\x)\in \mathcal{N}_{{\mathcal{\X}}}(\x).
\end{equation}
Similarly, Assumption \ref{ass4}-(ii) implies that if there exists ${j}\in[m]$  such that $h_{j}(\y_{j}) \in (0, \tau)$ (or $h_{m+j}(\y_{j})\in (0, \tau)$), then
\begin{equation}\label{eq3-2}
\eta_{{j}}(\y) h'_{{j}}(\y_{{j}})\geq0\;(\mbox{or}\;\eta_{{j}}(\y) h'_{m+{j}}(\y_{{j}})\geq0),\quad \forall \eta(\y)\in \mathcal{N}_{{\mathcal{\Y}}}(\y).
\end{equation}

When $g_{n+i}(\x_i)=-1$ and $h_{m+j}(\y_j)=-1$ for all $i\in[n]$ and $j\in[m]$ in \eqref{obb-3-1}, then \eqref{obb-3-1} is reduced to
\begin{equation}\label{eqad1}
\min_{\x\in\X}\max_{\y\in\Y} \,f(\x,\y):=c(\x,\y)
+\lambda_1\sum_{i=1}^n(g_{i}(\x_i)_+)^0
-\lambda_2\sum_{j=1}^m(h_{j}(\y_j)_+)^0.
\end{equation}

\begin{remark}\label{remark1}
Consider
\begin{equation}\label{eq-g1}
f(\x,\y):=c(\x,\y)+\lambda_1\|(\x-\underline{\a})_+\|_0+
\lambda_1\|(\overline{\a}-\x)_+\|_0
-\lambda_2\|(\y-\underline{\b})_+\|_0-\lambda_2\|(\overline{\b}-\y)_+\|_0,
\end{equation}
with $\underline{\a}$, $\overline{\a}\in\R^n$ and $\underline{\b}$, $\overline{\b}\in\R^m$. Then
\eqref{eq-g1} is a special case of problem \eqref{obb-3-1} with
$$
g_{i}(\x)=\x_i-{\underline{\a}}_i,\, g_{n+i}(\x)=\overline{\a}_{i}-\x_i,\, i\in[n];\;
h_{j}(\y)=\y_j-\underline{\b}_j,\, h_{m+j}(\y)=\overline{\b}_{j}-\y_j,\, j\in[m].
$$

In particular, the following three cases satisfy Assumption \ref{ass4}-(i), while
the judgment on Assumption \ref{ass4}-(ii) is the same.
\begin{itemize}
\item {\rm Case 1:} Let $\X=\{{\x}\in \R^n:\underline{\u}\leq \x\leq \overline{{\u}}\}$ and $f$ is defined as in \eqref{eq-g1} with $\overline{\a},\,\underline{\a}\in\X$. Then,
Assumption \ref{ass4}-(i) holds with
\begin{equation}\label{eq-g2}
\sigma=1\quad\mbox{and}\quad \tau=\min\{\tau_{\x,1},\tau_{\x,2}\},
\end{equation}
where
\begin{eqnarray*}
\begin{aligned}
&\tau_{\x,1}=\min\{1,\overline{\u}_i-\underline{\a}_i,
\overline{\a}_{i}-\underline{{\u}}_i:\overline{\u}_i>\underline{\a}_i,
\overline{\a}_{i}>\underline{{\u}}_i,i\in[n]\},\\ &\tau_{\x,2}=\min\{1,|\underline{\a}_i-\overline{\a}_{i}|/2:
\underline{\a}_i\neq\overline{\a}_{i},\,i\in[n]\}.
\end{aligned}\end{eqnarray*}
\item {\rm Case 2:} Let $f$ be defined as in \eqref{eq-g1} with $\underline{\a}\geq{\bf 0}$, $\overline{\a}\leq{\bf 0}$,  $\underline{\b}\geq{\bf 0}$, $\overline{\b}\leq{\bf 0}$,
$\bar{\X}=\{{\x}\in \R^n:\underline{\u}\leq \x\leq \overline{{\u}}\}$ and
$\tilde{\X}$ be
$$\tilde{\mathcal{X}}=\{\x:\,\|\x\|_2\leq \delta\}\quad\mbox{or}
\quad \tilde{\mathcal{X}}=\{\x:\,\|\x\|_1\leq \delta\}$$
with $\delta>0$. Then Assumption \ref{ass4}-(i) also holds with $\sigma$ and $\tau$ in \eqref{eq-g2}.
\item {\rm Case 3:} Let $\bar{\X}=\{{\x}\in \R^n:\underline{\u}\leq \x\leq \overline{{\u}}\}$,
$\tilde{\X}=\{\x:\textbf{A}\x\leq \c\}$ with $\textbf{A}\in\R_{+}^{r_1\times n}$, $\c\in\R^{r_1}$ and $f$ be specialized to
$$
f(\x,\y):=c(\x,\y)+\lambda_1\|(\x-\underline{\a})_+\|_0
-\lambda_2\|(\y-\underline{\b})_+\|_0$$
with $\underline{\a}_i\in[\underline{\u}_i,\overline{\u}_i)$ and $\underline{\b}_j\in[\underline{\v}_j,\overline{\v}_j)$ for $i\in[n]$ and $j\in[m]$. Then Assumption \ref{ass4}-(i) also holds with $\sigma=1$ and $\tau=\min\{1,\overline{\u}_i-\underline{\a}_i
:\overline{\u}_i>\underline{\a}_i,i\in[n]\}$.
\end{itemize}
\end{remark}

In particular, case 2 in Remark \ref{remark1} indicates that problem \eqref{obb-4}
with $\X=\{\x:\underline{\u}\leq \x\leq \overline{{\u}},\,\|\x\|_1\leq1\}$ and $\Y=\{\y:\underline{\v}\leq \x\leq \overline{{\v}},\,\|\y\|_1\leq1\}$ satisfies Assumption \ref{ass4}, and the problems in case 3 satisfying Assumption \ref{ass4} include
$$\min_{\x\in\X}\max_{\y\in\Y} \,f(\x,\y):=c(\x,\y)+\lambda_1\|\x_+\|_0
-\lambda_2\|\y_+\|_0$$
with $\X=\{\x:-\e\leq \x\leq \e,\,{\e}^\top\x\leq1\}$ and $\Y=\{\y:-\e\leq \y\leq \e,\,{\e}^\top\y\leq1\}$ as a special case.
\subsection{Density function $\rho$ under Assumption \ref{ass7}}\label{section5.1}
In this subsection, we show that Assumptions \ref{ass6} and \ref{ass2} hold for
problems \eqref{obb-3-1} and {its continuous relaxation} \eqref{obb-3r} when the density function $\rho$ satisfies Assumption \ref{ass7}. Moreover, when the density function $\rho$ is as in Example \ref{example4.1}, which satisfies Assumption \ref{ass7}, we will give more discussion on the weak-d stationary points of \eqref{obb-3r} and its smoothing version in \eqref{ob-cr}.

\subsubsection{Relations on saddle points and minimax points}
From the boundedness of $\X$ and $\Y$,
there exists a positive constant $L_{c,1}$ such that for all $\x\in\X$ and $\y\in\Y$, it holds
\begin{equation}\label{eq-c}
\|\partial_{\x}c(\x,\y)\|_{\infty}\leq L_{c,1}\quad \mbox{and}\quad
\|\partial_{\y}c(\x,\y)\|_{\infty}\leq L_{c,1}.
\end{equation}

For a given $\mu\in\R_{++}$ and $\bar{\y}\in\Y$, if $\x^*$ is a local solution of $\min_{\x\in\X}f^R(\x,\bar{\y},\mu)$, then
\begin{equation}\label{first1-1}
\0\in\partial_{\x}f^R(\x^*,\bar{\y},\mu)+\mathcal{N}_{\X}(\x^*).
\end{equation}
Similarly, for $\bar{\x}\in\X$, if $\y^*$ is a local solution of $\max_{\y\in\Y}f^R(\bar{\x},\y,\mu)$, then
\begin{equation}\label{first2-1}
\0\in-\partial_{\y}f^R(\bar{\x},\y^*,\mu)+\mathcal{N}_{\Y}(\y^*).
\end{equation}

For $i\in[n]$ and $j\in[{m}]$, by \cite[Proposition 2.3.9]{Clarke}, we have
\begin{equation}\label{eq-r11}\begin{aligned}
&\partial_{\x_i} \phi_{i}^R(\x_i,\mu)\subseteq\tilde{\partial}_{\x_i}\phi_{i}^R(\x_i,\mu):=
\partial_{t} r(t,\mu)_{t=g_{i}(\x_i)}g_{i}'(\x_i)+\partial_{t} r(t,\mu)_{t=g_{n+i}(\x_i)}g_{n+i}'(\x_i),\\
&\partial_{\y_j} \psi_{j}^R(\y_j,\mu)\subseteq\tilde{\partial}_{\y_j}\psi_{j}^R(\y_j,\mu):=\partial_{t} r(t,\mu)_{t=h_{j}(\y_j)}h_{j}'(\y_j)+\partial_{t} r(t,\mu)_{t=h_{m+j}(\y_j)}h_{m+j}'(\y_j).
\end{aligned}\end{equation}

By \cite[Corollary 2]{Clarke}
and recalling \eqref{eq-r11}, one has
\begin{eqnarray}
&\partial_{\x}f^R(\x^*,\bar{\y},\mu)\subseteq \tilde{\partial}_{\x}f^R(\x^*,\bar{\y},\mu):= \partial_{\x}c(\x^*,\bar{\y})+\lambda_1\sum_{i=1}^n\tilde{\partial}_{\x_i}{\phi}_{i}^{R}(\x^*_{i},\mu)\e_i,\label{first1-2}\\
&\partial_{\y}f^R(\bar{\x},\y^*,\mu)\subseteq\tilde{\partial}_{\y}f^R(\bar{\x},\y^*,\mu):= \partial_{\y}c(\bar{\x},\y^*)-\lambda_2\sum_{j=1}^m\tilde{\partial}_{\y_j}
{\psi}_{j}^R(\y^*_j,\mu)\e_j.\label{first2-2}
\end{eqnarray}
Combining \eqref{first1-1} with \eqref{first1-2}, and \eqref{first2-1} with \eqref{first2-2}, we obtain that
\begin{itemize}
\item if $\x^*$ is a local solution of $\min_{\x\in\X}f^R(\x,\bar{\y},\mu)$, then
\begin{equation}\label{first1}
\0\in\tilde{\partial}_{\x}f^R(\x^*,\bar{\y},\mu)+\mathcal{N}_{\X}(\x^*);
\end{equation}
\item if $\y^*$ is a local solution of $\max_{\y\in\Y}f^R(\bar{\x},\y,\mu)$, then
\begin{equation}\label{first2}
\0\in-\tilde{\partial}_{\y}f^R(\bar{\x},\y^*,\mu)+\mathcal{N}_{\Y}(\y^*).
\end{equation}
\end{itemize}

If $(\x^*,\y^*)\in\X\times\Y$ is a local saddle point of \eqref{obb-3r}, then \eqref{first1} and \eqref{first2} hold at $\bar{\x}=\x^*$ and $\bar{\y}=\y^*$.

In the rest of this paper, we denote ${\lambda}=\min\{\lambda_1,\lambda_2\}$ and set
\begin{equation}\label{eq-mu}
\bar{\mu}_1=\min\left\{\frac{\tau}{\alpha},\frac{{\lambda}\sigma\underline{\rho}}{L_{c,1}}
\right\}
\end{equation}
with $\tau$, $\sigma$ in Assumption \ref{ass4}, $\alpha$ in \eqref{eq-ass1}, $\underline{\rho}$ in Assumption \ref{ass7}, and $L_{c,1}$ a constant satisfying the two inequalities in \eqref{eq-c}.
Here, $0<\mu<\bar{\mu}_1\leq \tau/\alpha$ gives $\mu\alpha<\tau$, which together with \eqref{eq-rho} implies
\begin{align*}
&\mbox{if $g_{\hat{i}}(\x_i)\not\in(0,\tau)$, then $r(g_{\hat{i}}(\x_i),\mu)=(g_{\hat{i}}(\x_i)_+)^0$, $\forall   i\in[{n}]$, $\hat{i}\in\{i,n+i\}$}; \\
&\mbox{if $h_{\hat{j}}(\y_j)\not\in(0,\tau)$, then $r(h_{\hat{j}}(\y_j),\mu)=(h_{\hat{j}}(\y_j)_+)^0$, $\forall j\in[{m}]$, $\hat{j}\in\{j,m+j\}$}.
\end{align*}
Next, we will derive the lower bounds in \eqref{eq-ass6-1} and \eqref{eq-ass6-2} based on \eqref{first1} and \eqref{first2}, respectively.
\begin{proposition}\label{prop2}
Suppose problem \eqref{obb-3-1} satisfies Assumption \ref{ass4}. When density function $\rho$ satisfies Assumption \ref{ass7} and $0<\mu<\bar{\mu}_1$ with $\bar{\mu}_1$ defined in \eqref{eq-mu}, then the continuous relaxation model in \eqref{obb-3r} owns the following properties for any $i\in[n]$ and $j\in[m]$:
\begin{eqnarray}
\hspace{-0.3cm}&\mbox{if \eqref{first1} holds at $({\x}^*,\bar{\y})\in\X\times\Y$, then
$g_{\hat{i}}({\x}^*_i)\not\in(0,\alpha\mu)$, $\forall \,\hat{i}\in\{i,n+i\}$,}\label{lb1-1}\\
\hspace{-0.3cm}&\mbox{if \eqref{first2} holds at $(\bar{\x},{\y}^*)\in\X\times\Y$, then
$h_{\hat{j}}({\y}^*_j)\not\in(0,\alpha\mu)$,
$\forall \, \hat{j}\in\{j,m+j\}$}.\label{lb1-2}
\end{eqnarray}
\end{proposition}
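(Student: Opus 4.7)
My plan is to argue by contradiction, leveraging the structural conditions supplied by Assumption \ref{ass4} together with the positive lower bound $\underline{\rho}/\mu$ on the subdifferential $\partial_t r(t,\mu)$ for $t\in(0,\alpha\mu)$ that Assumption \ref{ass7} guarantees. Since the two statements \eqref{lb1-1} and \eqref{lb1-2} are symmetric, I will only sketch the $\x$-side. Among the four subcases (the coordinate index $\hat{i}\in\{i,n+i\}$ and the two inequalities in Assumption \ref{ass4}), a single representative case suffices. So I assume, for contradiction, that there exists $i\in[n]$ with $g_{i}(\x^*_i)\in(0,\alpha\mu)$.

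Because $\mu<\bar{\mu}_1\leq\tau/\alpha$, we have $g_{i}(\x^*_i)\in(0,\tau)$, so Assumption \ref{ass4}(i) applies. I would first exploit it to get: (a) $\x^*_i\in\mathrm{int}(\tilde{\X}_i)$, so $[\mathcal{N}_{\tilde{\X}}(\x^*)]_i=0$ and \eqref{eq3-3} reduces the $i$-th normal coordinate to the one from $\bar{\X}$; (b) $g_{n+i}(\x^*_i)\notin[0,\tau]$, hence $g_{n+i}(\x^*_i)\notin[0,\alpha\mu]$, which by \eqref{eq-rho} implies $r(\cdot,\mu)$ is locally constant near that point so $\partial_t r(t,\mu)_{t=g_{n+i}(\x^*_i)}=\{0\}$; (c) $|g_i'(\x^*_i)|\geq\sigma$; and (d) via \eqref{eq3-1}, $\zeta_i g_i'(\x^*_i)\geq 0$ for every $\zeta\in\mathcal{N}_{\X}(\x^*)$.

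Next, I expand the stationarity inclusion \eqref{first1} componentwise. There exist a subgradient element $\gamma\in\partial_{\x}c(\x^*,\bar{\y})$, scalars $\alpha_i\in\partial_t r(t,\mu)_{t=g_i(\x^*_i)}$, $\beta_i\in\partial_t r(t,\mu)_{t=g_{n+i}(\x^*_i)}=\{0\}$, and $\zeta\in\mathcal{N}_{\X}(\x^*)$ such that the $i$-th coordinate equation
\begin{equation*}
\gamma_i+\lambda_1\alpha_i g_i'(\x^*_i)+\lambda_1\beta_i g_{n+i}'(\x^*_i)+\zeta_i=0
\end{equation*}
holds. Since $\beta_i=0$, multiplying this by $g_i'(\x^*_i)$ and invoking $\zeta_i g_i'(\x^*_i)\geq 0$ yields
\begin{equation*}
\lambda_1\alpha_i\bigl(g_i'(\x^*_i)\bigr)^2\leq -\gamma_i g_i'(\x^*_i)\leq L_{c,1}\,|g_i'(\x^*_i)|,
\end{equation*}
where the last step uses \eqref{eq-c}. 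Dividing by $|g_i'(\x^*_i)|\geq\sigma$ and using $\alpha_i\geq\underline{\rho}/\mu$ from Assumption \ref{ass7}, I obtain $\lambda_1\underline{\rho}\sigma/\mu\leq L_{c,1}$, i.e. $\mu\geq\lambda_1\underline{\rho}\sigma/L_{c,1}\geq\lambda\underline{\rho}\sigma/L_{c,1}$, contradicting the assumption $\mu<\bar{\mu}_1$ from \eqref{eq-mu}.

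The subcase $\hat{i}=n+i$ is handled identically after swapping the roles of $g_i$ and $g_{n+i}$, and statement \eqref{lb1-2} follows by the mirror argument applied to the $\y$-stationarity condition \eqref{first2} using Assumption \ref{ass4}(ii) and \eqref{eq3-2}. The main obstacle I anticipate is bookkeeping: cleanly isolating the $i$-th component of the subdifferential and of the normal cone, and making sure the sign-alignment condition \eqref{eq3-1} is invoked on the correct normal element produced by the stationarity inclusion (rather than an arbitrary one), so that the contradiction with $\bar{\mu}_1$ is sharp.
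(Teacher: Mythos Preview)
Your proposal is correct and follows essentially the same approach as the paper: argue by contradiction, use Assumption~\ref{ass4} to strip away the $g_{n+i}$-contribution and to control the normal-cone term via the sign condition \eqref{eq3-1}, then combine the lower bound $\underline{\rho}/\mu$ on $\partial_t r(t,\mu)$ with $|g_i'(\x^*_i)|\geq\sigma$ and the Lipschitz bound $L_{c,1}$ to contradict $\mu<\bar{\mu}_1$. Your multiplication-by-$g_i'(\x^*_i)$ step is just a slightly more explicit rendering of the paper's inequality $\lambda_1|\omega\, g_{\tilde i}'(\x^*_{\tilde i})|\leq L_{c,1}$, but the logic and ingredients are identical.
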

\begin{proof}
We argue the above statements by contradiction.

If there exists $\tilde{i}\in[{n}]$ such that $0<g_{\tilde{i}}(\x^*_{\tilde{i}})<\alpha\mu$, by $\alpha\mu<\alpha\bar{\mu}_1\leq\tau$, \eqref{ass2-1} in Assumption \ref{ass4}, \eqref{eq-r11} and \eqref{first1-2}, we obtain
$\x_{\tilde{i}}^*\in{\rm int}(\tilde{\X}_{\tilde{i}})$, and
$$\begin{aligned}
&{[\tilde{\partial}_{\x}f^R(\x^*,\bar{\y},\mu)]}_{\tilde{i}}
=[\partial_{\x}c(\x^*,\bar{\y})]_{\tilde{i}}
+\lambda_1\partial_{t} r(t,\mu)_{t=g_{\tilde{i}}(\x^*_{\tilde{i}})} g_{\tilde{i}}'(\x^*_{\tilde{i}}).
\end{aligned}$$

From  \eqref{eq-19}, Assumption \ref{ass4}-(i), \eqref{eq3-1} and \eqref{first1}, we further have that there exists $\omega\in \partial_{t} r(t,\mu)_{t=g_{\tilde{i}}(\x^*_{\tilde{i}})}$ such that
$$\lambda_1\sigma\underline{\rho}/\mu
\leq\lambda_1|\omega g_{\tilde{i}}'(\x^*_{\tilde{i}})|\leq L_{c,1},$$
which  contradicts $\mu<\bar{\mu}_1\leq\lambda_1\sigma\underline{\rho}/L_{c,1}$. Thus, \eqref{lb1-1} holds.
Similar discussion can be derived if there exists $\tilde{i}\in[{n}]$ such that $0<g_{n+\tilde{i}}(\x^*_{\tilde{i}})<\alpha\mu$ and to \eqref{lb1-2}.
\end{proof}

Thus, under the conditions in Proposition \ref{prop2}, Assumption \ref{ass6} holds naturally for any $\mu\in(0,\bar{\mu}_1)$ with $\bar{\mu}_1$ defined in \eqref{eq-mu}.
In what follows, we will verify Assumption \ref{ass2} in this situation. To proceed, we need the following exact penalty result, which holds by \eqref{ass-X} and the boundedness of $\X$.
\begin{proposition}\label{prop7}
For any $\mu>0$, $\x^*\in\X$ and $\delta>0$,  there exists a $\beta>0$ such that any global minimizer of $\max_{\y'\in\Y\cap\mathbf{B}(\y^*,\pi(\delta))}f^R({\x},{\y}',\mu)$ on $\X\cap\mathbf{B}(\x^*,\delta)$ is a global minimizer of $p(\x,\mu)$ on $\tilde{\X}\cap\mathbf{B}(\x^*,\delta)$, where
$$p(\x,\mu)=\max_{\y'\in\Y\cap\mathbf{B}(\y^*,\pi(\delta))}f^R({\x},{\y}',\mu)+\beta\sum\nolimits_{t\in[\bar{n}]}u_t(\x)_+,$$
where $\pi(\cdot)$ is a function defined as in Definition \ref{defn-min}.
\end{proposition}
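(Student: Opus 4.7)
The plan is to deploy the classical exact-penalization argument for Lipschitz objectives with convex constraints satisfying Slater's condition. Write $F(\x) := \max_{\y'\in\Y\cap\mathbf{B}(\y^*,\pi(\delta))} f^R(\x,\y',\mu)$. First I would verify that $F$ is Lipschitz on $\mathbf{B}(\x^*,\delta)$ with some constant $L_F = L_F(\mu)$: each $f^R(\cdot,\y',\mu)$ is Lipschitz in $\x$ uniformly in $\y'\in\Y$ (from Lipschitz continuity of $c$, continuous differentiability of the $g_i$'s, and boundedness of $\partial_t r(\cdot,\mu)$ on the bounded image $g_i(\X)$), and the pointwise maximum preserves this constant.

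Next I would quantify Slater's condition inside the ball. Using \eqref{ass-X}, pick any $\tilde{\x}^0 \in {\rm int}(\X)$, set $\epsilon_0 := -\max_{t\in[\bar{n}]}u_t(\tilde{\x}^0) > 0$, and define $\hat{\x}^0 := \lambda\tilde{\x}^0 + (1-\lambda)\x^*$ with $\lambda\in(0,1]$ chosen so that $\hat{\x}^0 \in \mathbf{B}(\x^*,\delta)$. Convexity of each $u_t$ together with $u_t(\x^*)\leq 0$ yields $u_t(\hat{\x}^0) \leq -\epsilon$ for every $t\in[\bar{n}]$, where $\epsilon := \lambda\epsilon_0 > 0$, while $\hat{\x}^0\in\tilde{\X}$ by convexity of $\tilde{\X}$.

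The main step is a contradiction argument with $\beta := 2\delta L_F/\epsilon + 1$. Let $\bar{\x}$ be a global minimizer of $F$ on $\X\cap\mathbf{B}(\x^*,\delta)$, so $p(\bar{\x},\mu) = F(\bar{\x})$. Suppose some $\x'\in\tilde{\X}\cap\mathbf{B}(\x^*,\delta)$ satisfies $p(\x',\mu) < F(\bar{\x})$. If $\x'\in\X$, then $\sum_t u_t(\x')_+ = 0$, contradicting the minimality of $\bar{\x}$; so assume $\x'\notin\bar{\X}$. Parametrize $\x(\theta) := \theta\hat{\x}^0 + (1-\theta)\x'$ for $\theta\in[0,1]$ and set $\varphi_t(\theta) := u_t(\x(\theta))$. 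Since each $\varphi_t$ is convex with $\varphi_t(1)\leq -\epsilon$, define $\theta_t^*$ to be the smallest $\theta$ with $\varphi_t(\theta)\leq 0$ (set $\theta_t^*=0$ if $\varphi_t(0)\leq 0$). When $\varphi_t(0)>0$, the convexity inequality $0=\varphi_t(\theta_t^*) \leq \theta_t^*\varphi_t(1) + (1-\theta_t^*)\varphi_t(0)$ forces $\theta_t^*\leq u_t(\x')_+/\epsilon$, and a second application of convexity on $[\theta_t^*,1]$ (both endpoints nonpositive) shows $\varphi_t\leq 0$ there. With $\theta^* := \max_t\theta_t^*$, convexity of $\tilde{\X}$ and $\mathbf{B}(\x^*,\delta)$ then gives $\x(\theta^*)\in\X\cap\mathbf{B}(\x^*,\delta)$; the Lipschitz bound $\|\x(\theta^*)-\x'\|\leq 2\delta\theta^*\leq (2\delta/\epsilon)\sum_t u_t(\x')_+$ therefore yields
\begin{equation*}
F(\bar{\x})\leq F(\x(\theta^*)) \leq F(\x') + (2\delta L_F/\epsilon)\sum_t u_t(\x')_+ < F(\x') + \beta\sum_t u_t(\x')_+ = p(\x',\mu) < F(\bar{\x}),
\end{equation*}
which is the desired contradiction.

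The main obstacle is the quantitative bound $\theta_t^*\leq u_t(\x')_+/\epsilon$, which is precisely where Slater's condition enters as a constraint qualification: without a strictly feasible reference point with quantified slack $\epsilon$, one cannot convert infeasibility into a bounded displacement, and the exact penalty threshold may blow up. A subsidiary point is that $\epsilon$, and hence $\beta$, depends on the choice of Slater point $\tilde{\x}^0$ and on $\delta$, but once $(\x^*,\delta,\mu)$ are fixed a single $\beta$ works uniformly over all candidate perturbations $\x'$.
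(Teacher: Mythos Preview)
Your proof is correct and supplies precisely the classical exact-penalization argument (Lipschitz objective, convex constraints with a Slater point, penalty parameter dominating the Lipschitz constant divided by the Slater slack) that the paper invokes but does not spell out: the paper simply asserts the proposition ``holds by \eqref{ass-X} and the boundedness of $\X$''. Your construction of the in-ball Slater point $\hat{\x}^0$ by convex interpolation with $\x^*$, the bound $\theta_t^*\leq u_t(\x')_+/\epsilon$ from convexity, and the resulting displacement estimate are all standard and correctly executed; in effect you have written the proof the paper chose to omit.
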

\begin{proposition}\label{prop3}
Suppose problem \eqref{obb-3-1} satisfies Assumption \ref{ass4}.
Then Assumption \ref{ass2} holds for \eqref{obb-3r} when density function $\rho$ satisfies Assumption \ref{ass7} and $0<\mu<\bar{\mu}_1$ with $\bar{\mu}_1$ in \eqref{eq-mu}. Moreover, all global minimax points and local minimax points of \eqref{obb-3r} satisfy the lower bounds in \eqref{eq-ass6-1} and \eqref{eq-ass6-2}.
\end{proposition}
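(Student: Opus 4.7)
The plan is to verify the two parts of Assumption \ref{ass2} separately, then deduce the lower bound property for global and local minimax points of \eqref{obb-3r} by decomposing them into an outer-min and an inner-max condition. Part (ii) follows almost immediately from Proposition \ref{prop2}: if $\y^*$ is a global maximizer of $f^R(\bar{\x},\cdot,\mu)$ on $\Y$, then in particular it is a local maximizer, so the first order condition \eqref{first2} holds at $(\bar{\x},\y^*)$, and \eqref{lb1-2} in Proposition \ref{prop2} gives \eqref{eq-ass6-2} provided $0<\mu<\bar{\mu}_1$. So the whole substance of the proposition lies in part (i), where the objective $\x\mapsto\max_{\y\in\Y}f^R(\x,\y,\mu)$ is nonsmooth in $\x$ and we can no longer read off a first order condition of the form \eqref{first1} directly.

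For part (i), I would first use Proposition \ref{prop7} with $\delta$ chosen large enough that $\X\cap\mathbf{B}(\x^*,\delta)=\X$ and $\Y\cap\mathbf{B}(\y^*,\pi(\delta))=\Y$ (both sets are bounded). This exact penalisation removes the nonlinear constraint set $\bar{\X}$, so that $\x^*$ becomes an \emph{unconstrained-in-the-interior-of-$\tilde\X$-coordinate} minimiser of
\begin{equation*}
p(\x,\mu)=\max_{\y\in\Y}f^R(\x,\y,\mu)+\beta\sum_{t\in[\bar n]}u_t(\x)_+
\end{equation*}
over the box $\tilde{\X}$. The first order optimality condition therefore reads $\0\in\partial_{\x}p(\x^*,\mu)+\mathcal{N}_{\tilde\X}(\x^*)$. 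Applying Danskin's theorem (Clarke max rule) to $\max_{\y\in\Y}f^R(\x,\y,\mu)$, the sum rule to the finite sum of $(u_t)_+$, and the calculus rules used in \eqref{first1-2}, I obtain $\y^*\in\arg\max_{\y\in\Y}f^R(\x^*,\y,\mu)$, $\omega_i\in\partial_tr(t,\mu)|_{t=g_i(\x^*_i)}$ (and similarly for $g_{n+i}$), $\zeta_t\ge 0$, $\xi^t\in\partial u_t(\x^*)$ for $t\in\mathcal{T}_0(\x^*)$, together with a vector of the form \eqref{first1-2} evaluated at $(\x^*,\y^*)$, whose sum plus a normal vector from $\mathcal{N}_{\tilde\X}(\x^*)$ is zero.

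I then argue by contradiction exactly in the spirit of Proposition \ref{prop2}. Suppose some $g_{\tilde i}(\x^*_{\tilde i})\in(0,\alpha\mu)$. Since $\alpha\mu<\tau$, Assumption \ref{ass4}(i) forces $\x^*_{\tilde i}\in\mathrm{int}(\tilde\X_{\tilde i})$, hence $[\mathcal{N}_{\tilde\X}(\x^*)]_{\tilde i}=\{0\}$, and also $|g_{\tilde i}'(\x^*_{\tilde i})|\ge\sigma$ together with the alignment inequality $g_{\tilde i}'(\x^*_{\tilde i})\,[\xi^t]_{\tilde i}\ge 0$ for every $\xi^t\in\partial u_t(\x^*)$, $t\in\mathcal{T}_0(\x^*)$. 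Writing out the $\tilde i$-th coordinate of the first order condition, multiplying through by $\operatorname{sign}(g_{\tilde i}'(\x^*_{\tilde i}))$, using Assumption \ref{ass7} to bound $\omega_{\tilde i}\ge\underline{\rho}/\mu$ from below, using \eqref{eq-c} to bound $|[\partial_\x c(\x^*,\y^*)]_{\tilde i}|\le L_{c,1}$ and observing that the penalty contribution $\beta\sum_t\zeta_t[\xi^t]_{\tilde i}\operatorname{sign}(g_{\tilde i}'(\x^*_{\tilde i}))\ge 0$ can be discarded on the favourable side, I arrive at $\lambda_1\underline{\rho}\sigma/\mu\le L_{c,1}$, contradicting $\mu<\bar\mu_1\le\lambda\sigma\underline\rho/L_{c,1}$. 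The same argument handles $g_{n+\tilde i}$, so \eqref{eq-ass6-1} holds.

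Finally, the lower bound claim for minimax points follows by decomposing the definition. If $(\x^*,\y^*)$ is a global minimax point of \eqref{obb-3r}, then $\x^*\in\arg\min_{\x\in\X}\max_{\y\in\Y}f^R(\x,\y,\mu)$ and $\y^*\in\arg\max_{\y\in\Y}f^R(\x^*,\y,\mu)$, so parts (i) and (ii) just proved give \eqref{eq-ass6-1} and \eqref{eq-ass6-2} directly. For a local minimax point, Definition \ref{defn-min} gives the same two statements on the sets $\X\cap\mathbf{B}(\x^*,\delta)$ and $\Y\cap\mathbf{B}(\y^*,\pi(\delta))$; rerunning Proposition \ref{prop7} with these local sets (which is exactly what its statement covers) and repeating the argument above with neighbourhood-restricted first order conditions gives the same lower bounds. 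I expect the principal technical hurdle to be the application of Danskin's theorem to obtain a usable first order condition for $\max_{\y\in\Y}f^R(\x,\y,\mu)$ in Clarke form together with the exact penalty reduction — once that is in place, the rest is a careful coordinate-wise sign analysis parallel to Proposition \ref{prop2}.
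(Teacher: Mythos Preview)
Your proposal is correct, but it takes a genuinely different route from the paper for part (i). The paper does \emph{not} invoke Danskin's theorem. Instead, after the exact-penalty reduction of Proposition~\ref{prop7}, it works directly with the variational inequality
\[
0\le c(\x,\y_{\x})-c(\x^*,\y_{\x})+\lambda_1\sum_{i}\bigl(\phi_i^R(\x_i,\mu)-\phi_i^R(\x_i^*,\mu)\bigr)+\beta\sum_{t}u_t(\x)_+,
\]
valid for all $\x\in\tilde\X$ (here $\y_{\x}$ is any maximizer of $f^R(\x,\cdot,\mu)$ on $\Y$). Under the contradiction hypothesis $g_{\tilde i}(\x^*_{\tilde i})\in(0,\alpha\mu)$ and, say, $g_{\tilde i}'(\x_{\tilde i}^*)\ge\sigma$, the paper constructs an explicit perturbation $\bar\x$ with $\bar\x_{\tilde i}=\x^*_{\tilde i}-\delta_3/2$ and all other coordinates unchanged, then estimates each term via the mean value theorem and the upper semicontinuity of $\partial u_t$ near $\x^*$, obtaining $0\le L_{c,1}-\lambda_1\underline\rho(\sigma-\epsilon)/\mu+\beta\bar n\epsilon$ for arbitrarily small $\epsilon$, which contradicts $\mu<\bar\mu_1$.

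Your approach instead extracts a first-order condition for the marginal function $\x\mapsto\max_{\y\in\Y}f^R(\x,\y,\mu)$ via the Clarke max rule and then re-runs the coordinate argument of Proposition~\ref{prop2}. This is cleaner and avoids the $\epsilon$--$\delta$ bookkeeping, at the cost of invoking a nonsmooth Danskin-type result (e.g.\ \cite[Theorem~2.8.2]{Clarke}) whose hypotheses you should state explicitly. One small imprecision: Danskin yields a subgradient in the \emph{convex hull} over the maximizer set $M(\x^*)$, not a single $\y^*$; however, since the regularizer terms are independent of $\y$ and $\|[\partial_{\x}c(\x^*,\y)]_{\tilde i}\|\le L_{c,1}$ uniformly over $\Y$, your sign analysis goes through unchanged with any convex combination. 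The paper's perturbation argument is more elementary in that it sidesteps any subdifferential calculus for the max function, while your route is shorter and makes the parallel with Proposition~\ref{prop2} transparent.
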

\begin{proof}
For an $\bar{\x}\in\X$, suppose ${\y^*}$ is a global maximizer of $f^R(\bar{\x},\y,\mu)$ on $\Y$, then \eqref{first2} holds, by Proposition \ref{prop2}, which implies Assumption \ref{ass2}-(ii) holds, i.e.
$h_{j}({\y^*_j})\not\in(0,\alpha\mu)$ and $h_{m+j}({\y^*_j})\not\in(0,\alpha\mu)$,
$\forall j\in[{m}]$. Then, we only need to prove the condition (i) in Assumption \ref{ass2}.

For any $\x\in\tilde{\X}$, denote $\y_{\x}$ the maximizer of $f^R(\x,\y,\mu)$ on $\Y$, and
$\x^*$ a global minimizer of $\max_{\y'\in\Y}f^R({\x},{\y}',\mu)$ on $\X$. By Proposition \ref{prop7} with a sufficient large $\delta$ such that ${\X}\subseteq\mathbf{B}(\x^*,\delta)$, $\Y\subseteq\mathbf{B}(\y^*,\delta)$ and $\Y\subseteq\mathbf{B}(\y^*,\pi(\delta))$,
it gives the existence of $\beta$ such that, for any $\x\in\tilde{\X}\cap\mathbf{B}(\x^*,\delta)$, it holds
$$
f^R({\x}^*,{\y}_{\x},\mu)\leq \max_{\y'\in\Y}f^R({\x}^*,{\y}',\mu)=p(\x^*,\mu)\leq
f^R(\x,\y_{\x},\mu)+\beta\sum\nolimits_{t\in[\bar{n}]}u_t(\x)_+.
$$
Then, for any $\x\in\tilde{\X}\cap\mathbf{B}(\x^*,\delta)$, it has
\begin{equation}\label{eq2.10}
0\leq
c(\x,\y_{\x})-c({\x}^*,\y_{\x})+\lambda_1\sum_{i=1}^n(\phi_{i}^R(\x_i,\mu)
-\phi_{i}^R({\x}^*_i,\mu))
+\beta\sum_{t\in[\bar{n}]}u_t(\x)_+.
\end{equation}

To prove the lower bound in \eqref{eq-ass6-1}, we also argue it by contradiction.
Assume there exists $\tilde{i}\in[{n}]$ such that $0<g_{\tilde{i}}({\x}^*_{\tilde{i}})<\alpha\mu$, then
by Assumption \ref{ass4},
it holds
\begin{equation}\label{eq2.5}
\x_{\tilde{i}}^*\in{\rm int}(\tilde{\X}_{\tilde{i}}),\quad g_{n+\tilde{i}}(\x_{\tilde{i}}^*)\not\in[0,\tau]\quad\mbox{and}\quad|g_{\tilde{i}}'(\x_{\tilde{i}}^*)|\geq\sigma.
\end{equation}
Without loss of generality, we first assume $g_{\tilde{i}}'(\x_{\tilde{i}}^*)\geq\sigma$.

By $\mu<\bar{\mu}_1$, it has $\lambda_1\underline{\rho}\sigma-\mu L_{c,1}>0$. Then,
let $\epsilon$ be a sufficiently small positive number such that
\begin{equation}\label{eq-15}
(\lambda_1\underline{\rho}+\beta\bar{n}\mu)\epsilon<\lambda_1\underline{\rho}\sigma-\mu L_{c,1},
\end{equation}
which implies $0<\epsilon<\sigma$.

By \eqref{eq2.5} and the continuously differentiability of $g_{i}$ and $g_{n+i}$,
there exists a $\delta_1>0$ such that for any $\x\in \mathbf{B}(\x^*,\delta_1)$, it holds \begin{equation}\label{eq15}
\x_{\tilde{i}}\in{\rm int}(\tilde{\X}_{\tilde{i}}), \,\, g_{n+\tilde{i}}(\x_{\tilde{i}})\not\in[0,\tau],\,\, g_{\tilde{i}}(\x_{\tilde{i}})\in(0,\alpha\mu)\quad\mbox{and}\quad
g_{\tilde{i}}'(\x_{\tilde{i}})\geq\sigma-\epsilon.
\end{equation}

Moreover, using the Lipschitz continuity of $u_t$ and $\x^*\in\bar{\X}$, there exists $\delta_2\in(0,\delta_1]$ such that
\begin{equation}\label{eq2.6}
\mathcal{T}_+({\x})=\{t\in[\bar{n}]:u_t({\x})>0\}\subseteq \mathcal{T}_0({\x}^*), \,\forall\, \x\in \mathbf{B}(\x^*,\delta_2).
\end{equation}
Recalling \eqref{ass2-3} and the positiveness of $g_{\tilde{i}}'(\x_{\tilde{i}}^*)$, it gives $\xi^{t}_{\tilde{i}}(\x^*)\geq0$, $\forall \xi^{t}(\x^*)\in \partial u_t(\x^*)$, $t\in\mathcal{T}_0({\x}^*)$. By the upper semicontinuity of $\partial u_t$, it implies the existence of $\delta_3\in(0,\delta_2]$ such that
\begin{equation}\label{eq2.15}
\xi^{t}_{\tilde{i}}(\x)\geq-\epsilon,\quad\,\forall \xi^{t}(\x)\in \partial u_t(\x),\,\x\in \mathbf{B}(\x^*,\delta_3),\,t\in\mathcal{T}_0(\x^*).
\end{equation}

Based on the preliminary settings,
we choose $\x$ to be $\bar{\x}$ in \eqref{eq2.10}, where $\bar{\x}_i={\x}_i^*$ for $i\neq\tilde{{i}}$ and $\bar{\x}_{\tilde{{i}}}={\x}^*_{\tilde{i}}-{\delta}_3/{2}$,
then $\bar{\x}\in \mathbf{B}(\x^*,\delta_3)\cap\tilde{
\X}$ and
\begin{equation}\label{eq2.10-2}
\begin{aligned}
\sum_{i=1}^n(\phi_{i}^R(\bar{\x}_i,\mu)
-\phi_{i}^R({\x}^*_i,\mu))
=&r(g_{\tilde{i}}(\bar{\x}_{\tilde{i}}),\mu)-r(g_{\tilde{i}}({\x}_{\tilde{i}}^*),\mu)\\
=&\xi(\tilde{\x}_{\tilde{i}}) g_{\tilde{i}}'(\tilde{\x})(\bar{\x}_{\tilde{i}}-\x^*_{\tilde{i}})
\leq-\underline{\rho}(\sigma-\epsilon){\delta}_3/({2}\mu),
\end{aligned}\end{equation}
where the
second equality uses the mean value theorem
with $\xi(\tilde{\x}_{\tilde{i}})\in\partial_{t} r(t,\mu)_{t=g_{\tilde{i}}(\tilde{\x}_{\tilde{i}})}$ and $\tilde{\x}\in{\rm co}\{\bar{\x},\x^*\}\subseteq\mathbf{B}(\x^*,\delta_3)$, the last inequality follows by the definition of $\bar{\x}$, \eqref{eq15} and Assumption \ref{ass7}.

Next,
using \eqref{eq2.6} and \eqref{eq2.15}, we have
\begin{equation}\label{eq2.10-2-2}
\begin{aligned}
\sum\nolimits_{t\in[\bar{n}]}u_t(\bar{\x})_+
=\sum\nolimits_{t\in\mathcal{T}_+(\bar{\x})}u_t(\bar{\x})
=&\sum\nolimits_{t\in\mathcal{T}_+(\bar{\x})}(u_t(\bar{\x})-u_t({\x}^*))\\
=&\sum\nolimits_{t\in\mathcal{T}_+(\bar{\x})}\xi^t(\check{{\x}})^{\rm T}(\bar{\x}-{\x}^*)
\leq\delta_3\bar{n}\epsilon/2,
\end{aligned}\end{equation}
where $\xi^t(\check{{\x}})\in\partial u_t(\check{{\x}})$ with $\check{{\x}}\in{\rm co}\{\bar{\x},\x^*\}\subseteq\mathbf{B}(\x^*,\delta_3)\cap\tilde{\X}$.
Substituting \eqref{eq2.10-2} and \eqref{eq2.10-2-2} into \eqref{eq2.10}, one has
\begin{equation}\label{eq-e12}
0
\leq L_{c,1}-\lambda_1\underline{\rho}(\sigma-\epsilon)/\mu+ \beta\bar{n}\epsilon,
\end{equation}
which contradicts the value of $\epsilon$ in \eqref{eq-15}.

Similar to the above discussion, when $g_{\tilde{i}}'({\x}^*_{\tilde{i}})\leq-\sigma$, we can choose $\x$ to be $\bar{\x}$ in \eqref{eq2.10} with $\bar{\x}_{i}={\x}^*_{i}$ for $i\neq\tilde{{i}}$ and $\bar{\x}_{{\tilde{i}}}={\x}^*_{{\tilde{i}}}+\delta_3/2$. Then we can also obtain \eqref{eq-e12} and the same contradiction. If there exists $\tilde{i}\in[{n}]$ such that $0<g_{n+\tilde{i}}({\x}^*_{\tilde{i}})<\alpha\mu$, the analysis is almost same.
Thus, for all $i\in[{n}]$, $g_{i}({\x}_i^*)\not\in(0,\alpha\mu)$ and $g_{n+i}({\x}_i^*)\not\in(0,\alpha\mu)$, which give (i) in Assumption \ref{ass2}. Therefore, Assumption \ref{ass2} holds, which implies that any global minimax point of \eqref{obb-3r} satisfies the lower bounds in \eqref{eq-ass6-1} and \eqref{eq-ass6-2}.

If $({\x}^*,{\y}^*)$ is a local minimax point of \eqref{obb-3r} with positive constant $\delta_0$ and function $\pi(\delta)$ in Definition \ref{defn-min},
the above discussion can also be followed up when we restrict $\x$ to $\mathbf{B}({\x}^*,\delta)$, $\y$ to $\mathbf{B}({\y}^*,\delta)$ and
$\mathbf{B}({\y}^*,\pi(\delta))$ in the corresponding places.
Thus, any local minimax point of problem \eqref{obb-3r}
satisfies the lower bounds in \eqref{eq-ass6-1} and \eqref{eq-ass6-2}.
\end{proof}

Similar to the proof of Theorem \ref{theorem3}, we have the relation on the local minimax points between problems \eqref{obb-3-1} and \eqref{obb-3r}. Combining this with the above discussion, we conclude the relations on \eqref{obb-3-1} and \eqref{obb-3r} in the following theorem.
\begin{theorem}\label{theorem5}
Suppose problem \eqref{obb-3-1} satisfies Assumption \ref{ass4}, density function $\rho$ satisfies Assumption \ref{ass7} and $0<\mu<\bar{\mu}_1$ with $\bar{\mu}_1$ defined in \eqref{eq-mu},
then the following statements hold:
\begin{itemize}
\item [{\rm (i)}] $(\x^*,\y^*)$ is a saddle point (global minimax point) of problem \eqref{obb-3-1} if and only if it is a
saddle point (global minimax point) of \eqref{obb-3r};
\item [{\rm (ii)}] $(\x^*,\y^*)$ is an $\alpha\mu$-strong
local saddle point (local minimax point) of \eqref{obb-3-1}, if it is a local saddle point (local minimax point) of problem \eqref{obb-3r}.
\end{itemize}
\end{theorem}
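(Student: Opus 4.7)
The plan is to recognize that Theorem \ref{theorem5} is essentially a packaging of Theorems \ref{theorem1} and \ref{theorem3} for the particular structure of problem \eqref{obb-3-1}, with the work already done in Propositions \ref{prop2} and \ref{prop3}. The hypotheses of Theorem \ref{theorem5} (Assumption \ref{ass4}, Assumption \ref{ass7}, and $\mu\in(0,\bar{\mu}_1)$) are exactly the hypotheses of Proposition \ref{prop2} and Proposition \ref{prop3}, so the first move is to invoke these propositions to conclude that Assumption \ref{ass6} and Assumption \ref{ass2} are both satisfied by problem \eqref{obb-3r}. Once this is on the table, Theorems \ref{theorem1} and \ref{theorem3} can be applied directly to \eqref{obb-3-1} and its relaxation \eqref{obb-3r}.

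For statement (i), the saddle-point half is immediate: Theorem \ref{theorem1}(i) under Assumption \ref{ass6} gives the ``if and only if'' between saddle points of \eqref{obb-3-1} and saddle points of \eqref{obb-3r}. The global minimax half is identical once Assumption \ref{ass2} is in hand: Theorem \ref{theorem3} gives the corresponding equivalence for global minimax points. So part (i) reduces to citing the two equivalences, with no additional argument needed.

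For statement (ii), the local saddle-point case, I would apply Theorem \ref{theorem1}(ii) to conclude that any local saddle point $(\x^*,\y^*)$ of \eqref{obb-3r} is a local saddle point of \eqref{obb-3-1}. To upgrade this to the $\alpha\mu$-strong version (Definition \ref{lif-sta}), I would observe that a local saddle point of \eqref{obb-3r} must satisfy the first order conditions \eqref{first1} and \eqref{first2} at $\bar{\x}=\x^*$ and $\bar{\y}=\y^*$, so Proposition \ref{prop2} (conclusions \eqref{lb1-1} and \eqref{lb1-2}) yields $g_{\hat{i}}(\x^*_i)\notin(0,\alpha\mu)$ and $h_{\hat{j}}(\y^*_j)\notin(0,\alpha\mu)$ for every $i\in[n]$, $\hat{i}\in\{i,n+i\}$ and every $j\in[m]$, $\hat{j}\in\{j,m+j\}$, which is exactly the lower bound \eqref{lower1} with $\nu=\alpha\mu$ for the particular $g,h$ in \eqref{2n}--\eqref{2m}.

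The local minimax-point case in (ii) is where the genuine work lies, since Theorem \ref{theorem3} was stated only for global minimax points. The plan is to mimic the proof of Theorem \ref{theorem3} in a neighborhood $\mathbf{B}(\x^*,\delta)\times\mathbf{B}(\y^*,\pi(\delta))$ from Definition \ref{defn-min}. Proposition \ref{prop3} already states that every local minimax point of \eqref{obb-3r} satisfies the lower bounds \eqref{eq-ass6-1}--\eqref{eq-ass6-2}, so by \eqref{rela2} we get $f^R(\x^*,\y^*,\mu)=f(\x^*,\y^*)$, and for the inner maximizer $\y_\x$ of $f^R(\x,\cdot,\mu)$ over $\Y\cap\mathbf{B}(\y^*,\pi(\delta))$, Assumption \ref{ass2}(ii) (which is unchanged by restricting to a neighborhood of $\y^*$, since the maximizer remains a maximizer on the restricted set) again yields $\psi_j^R(\y_\x,\mu)=\psi_j(\y_\x)$, so $\max_{\y'\in\Y\cap\mathbf{B}(\y^*,\pi(\delta))}f^R(\x,\y',\mu)\leq\max_{\y'\in\Y\cap\mathbf{B}(\y^*,\pi(\delta))}f(\x,\y')$. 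Combined with $f(\x^*,\y)\leq f^R(\x^*,\y,\mu)$ for all $\y\in\Y$, the local minimax inequality for \eqref{obb-3r} implies the local minimax inequality for \eqref{obb-3-1}. The main obstacle I anticipate is carefully verifying that Assumption \ref{ass2}(i) indeed propagates to the neighborhood $\mathbf{B}(\x^*,\delta)$ (so that the outer minimizer is still controlled); this is handled by restating Proposition \ref{prop3} in the local form already indicated in its proof (``the above discussion can also be followed up when we restrict $\x$ to $\mathbf{B}(\x^*,\delta)$\ldots''), which localizes the exact penalty argument of Proposition \ref{prop7}.
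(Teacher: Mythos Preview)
Your proposal is correct and follows essentially the same route as the paper: the paper does not give a separate proof of Theorem~\ref{theorem5} but simply notes that Propositions~\ref{prop2} and~\ref{prop3} verify Assumptions~\ref{ass6} and~\ref{ass2}, after which Theorems~\ref{theorem1} and~\ref{theorem3} yield (i) and the local saddle point half of (ii), with the $\alpha\mu$-strong upgrade coming from the lower bounds in Proposition~\ref{prop2}; for the local minimax case the paper says only ``similar to the proof of Theorem~\ref{theorem3}'' and points back to the last paragraph of the proof of Proposition~\ref{prop3}, which is exactly the localization you sketch.
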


\subsubsection{Stationary points of \eqref{obb-3r} with $\rho$ in Example \ref{example4.1}}\label{section4.4}
In this subsection, we focus on the relations of \eqref{obb-3-1} and \eqref{obb-3r} when the density function $\rho$ is defined as in Example \ref{example4.1}, which makes the corresponding function $r$ satisfy Assumption \ref{ass7} with $\alpha=\underline{\rho}=\overline{\rho}=1$. Theorem \ref{theorem5} has established the relations on the (local) saddle points and (global or local) minimax points between problems \eqref{obb-3-1} and \eqref{obb-3r}.
In this subsection, we suppose problem \eqref{obb-3-1} satisfies Assumption \ref{ass4}, and functions $g_{i}$, $h_{j}$ in \eqref{obb-3-1} are convex for all $i\in[{2n}]$ and $j\in[2{m}]$. We will study the relations on a class of stationary points of \eqref{obb-3r} with the $\mu$-strong local saddle points of \eqref{obb-3-1} in what follows.

For a locally Lipschitz continuous function $\varphi:\R^n\rightarrow\R$, the generalized (Clarke) directional derivative \cite{Clarke} of $\varphi$ at point $\x$ in direction $\v$ is well-defined, i.e.
$$\varphi^{\circ}(\x,\v)=\limsup_{\z\rightarrow\x,t\downarrow0}\frac{\varphi(\z+t\v)-\varphi(\z)}{t}.$$
Function $\varphi$ is said to be Bouligand-differentiable (B-differentiable) at $\x$, if $\varphi$ is locally Lipschitz continuous around $\x$ and directionally differentiable at $\x$, i.e. for any $\v\in\mathbb{R}^n$,
$$\varphi'(\x,\v)=\limsup_{t\downarrow0}\frac{\varphi(\x+t\v)-\varphi(\x)}{t}\quad\mbox{exists}.$$
It is well-known that $\varphi^{\circ}(\x,\v)\geq \varphi'(\x,\v)$ in general and these two directional derivatives are the same if function $\varphi$ is (Clarke) regular \cite{Clarke}. However, most nonconvex functions are not regular and a nonsmooth nonconvex function is not always directionally differentiable.
Notice that convex functions and differentiable functions are directionally differentiable, then a DC (difference-of-convex) function is directionally differentiable \cite{Rock-Wets}, where we call function $\varphi$ a DC function, if it can be formulated by the difference of two convex functions. This promotes some kinds of stationary points for the DC programming \cite{Pang2017}, such as the d(irectional)-stationary point and the weak d-stationary point, both of which are generally stronger than the Clarke stationary point.

Note that $r(t,\mu)$ in Example \ref{example4.1} can be expressed by the following DC function
\begin{equation}\label{eq-r1}
r(t,\mu)=t_+/\mu-(t-\mu)_+/\mu.
\end{equation}
From the definitions of $\phi_{i}^R$ and $\psi_{j}^R$ in \eqref{eq-phi}, the objective function in \eqref{obb-3r} has the formulation of
\begin{equation}\label{eqfR}
\begin{aligned}
f^R(\x,\y,\mu)=c(\x,\y)+&\lambda_1\sum_{i=1}^n
\left(g_{i}(\x_i)_+/\mu-(g_{i}(\x_i)-\mu)_+/\mu\right)\\
+&\lambda_1\sum_{i=1}^n\left(g_{n+i}(\x_i)_+/\mu
-(g_{n+i}(\x_i)-\mu)_+/\mu\right)\\
-&\lambda_2\sum_{j=1}^m \left(h_{j}(\y_j)_+/\mu-(h_{j}(\y_j)-\mu)_+/\mu\right)\\
-&\lambda_2\sum_{j=1}^m\left(h_{m+j}(\y_j)_+/\mu-
(h_{m+j}(\y_j)-\mu)_+/\mu\right).
\end{aligned}\end{equation}

For fixed ${\x}^*\in\mathcal{X}$, ${\y}^*\in\mathcal{Y}$ and $\mu\in\R_{++}$,
denote
\begin{eqnarray*}
f_{{\y}^*,\mu}^R(\x)\triangleq f^R(\x,{\y}^*,\mu),\quad f_{{\x}^*,\mu}^R(\y)\triangleq f^R({\x}^*,{\y},\mu),
\end{eqnarray*}
and consider the following two optimization problems
\begin{equation}\label{eq2.21}
\min_{\x\in\X}f^R_{{\y}^*,\mu}(\x)\quad\mbox{and}\quad \max_{\y\in\Y}f^R_{{\x}^*,\mu}(\y).
\end{equation}
By \eqref{eqfR} and the convexity-concavity of $c$, the two objective functions in \eqref{eq2.21}
are DC functions with respect to $\x$ and $\y$, and then they are B-differentiable on $\X$ and $\Y$, respectively.
For the sake of completeness, we recall the definition of d-stationary point in DC programming. We call
$\x^*\in \X$ a d-stationary point \cite[Definition 6.1.1]{Pang-book3} of the minimization problem in \eqref{eq2.21}, if
\begin{equation}\label{d-s1}
(f_{{\y}^*,\mu}^R)'(\x^*;\x-\x^*)\geq0,\quad\forall\, \x\in\X,
\end{equation}
which is a necessary optimality condition to the minimization program in \eqref{eq2.21}.

Define
$$
\varpi_{1}(t)=t, \quad\varpi_{2}(t)=0,\quad \varpi(t)=\max\{\varpi_{1}(t),\varpi_{2}(t)\}$$
$$\mbox{and}\quad \mathcal{D}(t)=\{d\in\{1,2\}:\varpi(t)=\varpi_d(t)\}.$$
It is clear that
$$\mbox{$\varpi_{1}'(t)=1$, $\varpi_{2}'(t)=0$ and $\partial \varpi(t)=\left\{\begin{aligned}
&1&&\mbox{if $t>0$}\\
&[0,1]&&\mbox{if $t=0$}\\
&0&&\mbox{if $t<0$}.
\end{aligned}\right.$}$$

\eqref{d-s1} is equivalent to that, for any $q_{i}^{*}\in\mathcal{D}(g_{i}(\x^*_i)-\mu)$ and
$q_{n+i}^{*}\in\mathcal{D}(g_{n+i}(\x^*_i)-\mu)$, it holds
\begin{equation}\label{eq2.25-2}
\begin{aligned}
&\frac{\lambda_1}{\mu}\sum_{i=1}^n\varpi_{q_{i}^{*}}'(t)_{t=g_{i}(\x^*_i)-\mu}
g_{i}'(\x^*_i)\e_i+\frac{\lambda_1}{\mu}\sum_{i=1}^n\varpi_{q_{n+i}^{*}}'(t)_{t=g_{n+i}(\x^*_i)-\mu}
g_{n+i}'(\x^*_i)\e_i\\
\in&\partial_{\x} c(\x^*,\y^*)+
\frac{\lambda_1}{\mu}\partial\left(\sum_{i=1}^n\varpi(g_{i}(\x^*_i))+\varpi(g_{n+i}(\x^*_i))\right)
+\mathcal{N}_{\X}(\x^*),
\end{aligned}
\end{equation}
in which by \cite[Proposition 2.3.10]{Clarke},
$$\partial\left(\sum_{i}\varpi(g_i(\x^*))+\varpi(g_{n+i}(\x^*_i))\right)=\sum_{i}
\left(\partial\varpi(t)_{t=g_{i}(\x^*_i)} g'_{i}(\x^*_i)+\partial\varpi(t)_{t=g_{n+i}(\x^*_i)} g'_{n+i}(\x^*_i)\right)\e_i.$$
Similarly, we call $\y^*\in \Y$ a d-stationary point of the maximization problem in \eqref{eq2.21}, if
$$
(f_{{\x}^*,\mu}^R)'(\y^*;\y-\y^*)\leq0,\quad\forall \y\in\Y,$$
which is equivalent to that
for any $p_{j}^{*}\in\mathcal{D}(h_{j}(\y^*_j)-\mu)$ and $p_{m+j}^{*}\in\mathcal{D}(h_{m+j}(\y^*_j)-\mu)$, it holds
\begin{equation}\label{eq2.25}
\begin{aligned}
&\frac{\lambda_2}{\mu}\sum_{j=1}^m\varpi_{p_{j}^{*}}'(t)_{t=h_{j}(\y^*_j)-\mu}
h_{j}'(\y^*_j)\e_j+\frac{\lambda_2}{\mu}\sum_{j=1}^m\varpi_{p_{m+j}^{*}}'(t)_{t=h_{m+j}(\y^*_j)-\mu}
h_{m+j}'(\y^*_j)\e_j\\
\in&-\partial_{\y} c(\x^*,\y^*)+
\frac{\lambda_2}{\mu}\partial\left(\sum_{j=1}^m\varpi(h_{j}(\y^*_j))+\varpi(h_{m+j}(\y^*_j))\right)
+\mathcal{N}_{\Y}(\y^*).
\end{aligned}\end{equation}

Based on the above analysis, we introduce the following definitions to min-max problem \eqref{obb-3r}.
\begin{definition}\label{def-d}
 For $(\x^*,\y^*)\in\X\times\Y$,
\begin{itemize}
\item if \eqref{eq2.25-2} and \eqref{eq2.25} hold for all
$q_{i}^{*}\in\mathcal{D}(g_{i}(\x^*_i)-\mu)$,
$q_{n+i}^{*}\in\mathcal{D}(g_{n+i}(\x^*_i)-\mu)$, $p_{j}^{*}\in\mathcal{D}(h_{j}(\y^*_j)-\mu)$ and $p_{m+j}^{*}\in\mathcal{D}(h_{m+j}(\y^*_j)-\mu)$,
we call $(\x^*,\y^*)$
 a \textbf{{d-stationary point}} of min-max problem \eqref{obb-3r};
\item
if there exist $q_{i}^{*}\in\mathcal{D}(g_{i}(\x^*_i)-\mu)$,
$q_{n+i}^{*}\in\mathcal{D}(g_{n+i}(\x^*_i)-\mu)$, $p_{j}^{*}\in\mathcal{D}(h_{j}(\y^*_j)-\mu)$ and $p_{m+j}^{*}\in\mathcal{D}(h_{m+j}(\y^*_j)-\mu)$ such that \eqref{eq2.25-2} and \eqref{eq2.25} hold, we call $(\x^*,\y^*)$ a \textbf{weak d-stationary point} of min-max problem \eqref{obb-3r}.
\end{itemize}
\end{definition}

On the one hand, if $(\x^*,\y^*)$ is a local saddle point of problem \eqref{obb-3r}, then it is  a (weak) d-stationary point of \eqref{obb-3r}. On the other hand, if $(\x^*,\y^*)\in\X\times\Y$ is a weak d-stationary point of \eqref{obb-3r}, then it satisfies \eqref{first1} and \eqref{first2}.
\begin{proposition}\label{prop6}
Let density function $\rho$ satisfy Assumption \ref{ass7} and $0<\mu<\bar{\mu}_1$ with $\bar{\mu}_1$ defined in \eqref{eq-mu}. If $(\x^*,\y^*)$ is a weak d-stationary point of \eqref{obb-3r}, then the following statements hold.
\begin{itemize}
\item [{\rm (i)}] $g_{\hat{i}}({\x}^*_i)\not\in(0,\mu)$, $\forall i\in[{n}]$, $\hat{i}\in\{i,n+i\}$ and
$h_{\hat{j}}({\y}^*_j)\not\in(0,\mu)$,
$\forall j\in[{m}]$, $\hat{j}\in\{j,m+j\}$;
\item [{\rm (ii)}] for $\hat{i}\in\{i, n+i\}$, if $g_{\hat{i}}(\x^*_i)=\mu$ and
$q_{\hat{i}}^{*}\in\mathcal{D}(g_{\hat{i}}(\x^*_i)-\mu)$
such that \eqref{eq2.25-2} holds, then $q_{\hat{i}}^{*}=1$;
\item [{\rm (iii)}] for $\hat{j}\in\{j,m+j\}$, if $h_{\hat{j}}(\y^*_j)=\mu$ and
$p_{\hat{j}}^{*}\in\mathcal{D}(h_{\hat{j}}(\y^*_j)-\mu)$
such that \eqref{eq2.25} holds, then $p_{\hat{j}}^{*}=1$.
\end{itemize}
\end{proposition}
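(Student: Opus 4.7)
The plan is to deduce (i) directly from Proposition~\ref{prop2} and then to establish (ii), (iii) by contradiction arguments that closely mirror the proof of Proposition~\ref{prop2}, exploiting the specific DC structure of $r$ coming from Example~\ref{example4.1}. For (i), recall that Example~\ref{example4.1} gives $\alpha=\underline{\rho}=1$, so $\alpha\mu=\mu$. Since the paragraph immediately preceding the proposition already observes that every weak d-stationary point $(\x^*,\y^*)$ satisfies the inclusions \eqref{first1} at $\bar{\y}=\y^*$ and \eqref{first2} at $\bar{\x}=\x^*$, applying Proposition~\ref{prop2} to these two inclusions yields (i) at once.

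For (ii), I would argue by contradiction. Fix $i\in[n]$ and $\hat{i}\in\{i,n+i\}$ with $g_{\hat{i}}(\x^*_i)=\mu$, and denote by $i'$ the index in $\{i,n+i\}\setminus\{\hat{i}\}$. Assume, toward a contradiction, that \eqref{eq2.25-2} holds with $q^*_{\hat{i}}=2$. Since $\mu\in(0,\tau)$, Assumption~\ref{ass4}(i) applies to coordinate $i$ and gives $\x^*_i\in{\rm int}(\tilde{\X}_i)$, $|g'_{\hat{i}}(\x^*_i)|\geq\sigma$, the sign compatibility \eqref{eq3-1}, and crucially the separation $g_{i'}(\x^*_i)\notin[0,\tau]$. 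Because of this separation, $g_{i'}(\x^*_i)$ and $g_{i'}(\x^*_i)-\mu$ lie strictly on the same side of $0$, so $\partial\varpi(g_{i'}(\x^*_i))$ and $\mathcal{D}(g_{i'}(\x^*_i)-\mu)$ are each singletons whose entries take the common value $0$ or $1$. The contribution of $g_{i'}$ therefore cancels between the two sides of \eqref{eq2.25-2} in coordinate $i$. Combined with $\varpi'_2(0)=0$ and $\partial\varpi(\mu)=\{1\}$, the $i$-th coordinate of the inclusion collapses to
\[
-\frac{\lambda_1}{\mu}\,g'_{\hat{i}}(\x^*_i)\in[\partial_{\x}c(\x^*,\y^*)]_i+[\mathcal{N}_{\X}(\x^*)]_i.
\]

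Next I would close the contradiction using the bound \eqref{eq-c} together with \eqref{eq3-1}: the sign compatibility forces any $\xi\in[\mathcal{N}_{\X}(\x^*)]_i$ to share the sign of $g'_{\hat{i}}(\x^*_i)$, so the $\partial_{\x}c$-component in direction $i$ must have absolute value at least $\lambda_1\sigma/\mu$. But \eqref{eq-c} caps this absolute value by $L_{c,1}$, forcing $\mu\geq\lambda_1\sigma/L_{c,1}$, which contradicts $\mu<\bar{\mu}_1\leq\lambda\sigma/L_{c,1}\leq\lambda_1\sigma/L_{c,1}$ (noting $\underline{\rho}=1$ in Example~\ref{example4.1} and $\lambda\leq\lambda_1$). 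Hence $q^*_{\hat{i}}=1$. Part (iii) is entirely symmetric: one repeats the argument with \eqref{first2}, \eqref{eq3-2}, Assumption~\ref{ass4}(ii) and the second bound in \eqref{eq-c}. The main obstacle I anticipate is the bookkeeping for the paired function $g_{i'}$: one must verify that the singleton structure of $\partial\varpi$ and $\mathcal{D}$ at $g_{i'}(\x^*_i)$ makes its contribution cancel out on the two sides of \eqref{eq2.25-2}, and this cancellation is precisely what the separation clause in Assumption~\ref{ass4} was designed to guarantee.
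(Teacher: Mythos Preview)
Your proposal is correct and follows essentially the same route as the paper's own proof: part~(i) is obtained from Proposition~\ref{prop2} via the observation that a weak d-stationary point satisfies \eqref{first1}--\eqref{first2} (with $\alpha=1$ here), and parts~(ii)--(iii) are handled by the same contradiction, reducing the $i$-th coordinate of \eqref{eq2.25-2} to the inclusion $0\in[\partial_{\x}c(\x^*,\y^*)]_i+\tfrac{\lambda_1}{\mu}g'_{\hat{i}}(\x^*_i)+[\mathcal{N}_{\X}(\x^*)]_i$ and deriving $\lambda_1\sigma/\mu\le L_{c,1}$ from \eqref{eq-c} and \eqref{eq3-1}. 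Your treatment of the paired index $i'$ and its cancellation is exactly what the paper encodes in the line ``$q_{n+\tilde{i}}^{*}$ is unique and $\varpi_{q_{n+\tilde{i}}^{*}}'(t)|_{t=g_{n+\tilde{i}}(\x^*_{\tilde{i}})-\mu}=\varpi'(t)|_{t=g_{n+\tilde{i}}(\x^*_{\tilde{i}})}$''.
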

\begin{proof}
From Proposition \ref{prop2}, (i) holds naturally. Next, we argue
(ii) by contradiction and (iii) can be proved similarly.
For item (ii), suppose there exists ${\tilde{i}}\in[{n}]$ such that $g_{\tilde{i}}(\x^*_{\tilde{i}})=\mu$ and \eqref{eq2.25-2} holds with $q_{\tilde{i}}^{*}=2$. By Assumption \ref{ass4} and $\mu<\bar{\mu}_1\leq\tau$, $\x_{\tilde{i}}^*\in{\rm int}(\tilde{\X}_{\tilde{i}})$ and $g_{n+\tilde{i}}(\x^*_{\tilde{i}})\not\in[0,\tau]$, which implies $q_{n+\tilde{i}}^{*}\in\mathcal{D}(g_{n+\tilde{i}}(\x^*_{\tilde{i}})-\mu)$ is unique and $\varpi_{q_{n+\tilde{i}}^{*}}'(t)_{t=g_{n+\tilde{i}}(\x^*_{\tilde{i}})-\mu}
=\varpi'(t)_{t=g_{n+\tilde{i}}(\x^*_{\tilde{i}})}$. Then, \eqref{eq2.25-2} gives
\begin{equation}\label{eq2.4}
0\in[\partial_{\x} c(\x^*,\y^*)]_{\tilde{i}}+\frac{\lambda_1}{\mu}g_{\tilde{i}}'(\x^*_{\tilde{i}})
+[\mathcal{N}_{{\X}}(\x^*)]_{\tilde{i}}.
\end{equation}

Using Assumption \ref{ass4}, \eqref{eq3-3} and \eqref{eq3-1}, we confirm that
$\frac{\lambda_1\sigma}{\mu}\leq L_{c,1}$,
which contradicts to the supposition on the value of $\mu$ and gives the result in (ii).
\end{proof}

For $t\in\R$, denote $\A_{i}^0(t)=\{\hat{i}\in\{i,n+i\}:g_{\hat{i}}(t)=0\}$ and $\mathcal{B}_{j}^0(t)=\{\hat{j}\in\{j,m+j\}:h_{\hat{j}}(t)=0\}$.
For given $(\x^*,\y^*)\in\X\times\Y$ and $\mu\in\R_{++}$, consider the following two functions
\begin{equation}\label{eq-W2}
W_{\x^*,\y^*,\mu}(\x)=c(\x,\y^*)+
\frac{\lambda_1}{\mu}\sum\nolimits_{i=1}^n\sum\nolimits_{\hat{i}\in\A_i^0(\x_i^*)}g_{\hat{i}}(\x_i)_+,
\end{equation}
\begin{equation}\label{eq-V}
V_{\x^*,\y^*,\mu}(\y)=-c(\x^*,\y)+\frac{\lambda_2}{\mu}\sum\nolimits_{j=1}^m
\sum\nolimits_{\hat{j}\in\mathcal{B}_j^0(\y_j^*)}h_{\hat{j}}(\y_j)_+.
\end{equation}

If $g_i$ is convex for any $i\in[2n]$, by the convexity of $c(\cdot,\y^*)$, function $W_{\x^*,\y^*,\mu}$ is convex on $\X$, which gives that ${\x}^*$ is a minimizer of $W_{\x^*,\y^*,\mu}$ on $\X$ if and only if
\begin{equation}\label{eq-W}
\begin{aligned}
0\in&\partial W_{\x^*,\y^*,\mu}({\x}^*)+\mathcal{N}_{\X}({\x}^*)\\
=&\partial_{\x} c({\x}^*,\y^*)+
\frac{\lambda_1}{\mu}\sum\nolimits_{i=1}^n\left(\sum\nolimits_{\hat{i}\in\A_i^0(\x_i^*)}[0,1]
g_{\hat{i}}'({{\x}}_i^*)\right)\e_i+\mathcal{N}_{\X}({\x}^*).
\end{aligned}\end{equation}
Similarly, if $h_j$ is convex for any $j\in[2m]$, $V_{\x^*,\y^*,\mu}$ is convex on $\Y$ and ${\y}^*\in\Y$ is a minimizer of $V_{\x^*,\y^*,\mu}$ on $\Y$ if and only if
$$                       0\in\partial V_{\x^*,\y^*,\mu}({\y}^*)+\mathcal{N}_{\Y}({\y}^*).
$$

In what follows, we will verify that all weak d-stationary points of \eqref{obb-3r} are the $\mu$-strong local saddle points of \eqref{obb-3-1}.
\begin{theorem}\label{theorem4}
Under conditions of Proposition \ref{prop6},
if $(\x^*,\y^*)\in\X\times\Y$ is a weak d-stationary point of \eqref{obb-3r},
then it is a $\mu$-strong local saddle point of problem \eqref{obb-3-1}.
\end{theorem}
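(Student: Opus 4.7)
The plan is to invoke Theorem \ref{theorem2} with $\nu=\mu$, which reduces the task to two subgoals: first, showing that $(\x^*,\y^*)$ satisfies the lower bound property \eqref{lower1} with $\nu=\mu$; and second, showing that $(\x^*,\y^*)$ is (actually globally, hence locally) a saddle point of $c$ restricted to $(\X\cap\hat{\X})\times(\Y\cap\hat{\Y})$. The first subgoal is immediate from Proposition \ref{prop6}(i) combined with $\alpha=1$ for the density function in Example \ref{example4.1}, which gives $g_{\hat{i}}(\x^*_i)\notin(0,\mu)$ and $h_{\hat{j}}(\y^*_j)\notin(0,\mu)$.

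For the second subgoal, I would unpack the weak d-stationarity condition \eqref{eq2.25-2} by a four-case analysis on the value of $g_{\hat{i}}(\x^*_i)$, exploiting the just-established lower bound so that only the cases $g_{\hat{i}}(\x^*_i)\in\{<0,\,0,\,\mu,\,>\mu\}$ can occur. When $g_{\hat{i}}(\x^*_i)<0$ (resp.\ $>\mu$), the selector set $\mathcal{D}(g_{\hat{i}}(\x^*_i)-\mu)$ is a singleton and the LHS term in \eqref{eq2.25-2} cancels the contribution of $\partial\varpi(g_{\hat{i}}(\x^*_i))$ on the RHS. When $g_{\hat{i}}(\x^*_i)=0$, one has $\varpi'_{q^*_{\hat{i}}}=0$ while $\partial\varpi(0)=[0,1]$, producing exactly the multivalued term in \eqref{eq-W}. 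The delicate case is $g_{\hat{i}}(\x^*_i)=\mu$, in which $\mathcal{D}(0)=\{1,2\}$ is not a singleton; here Proposition \ref{prop6}(ii) forces $q^*_{\hat{i}}=1$ so that the LHS and RHS contributions again cancel. Consequently \eqref{eq2.25-2} collapses to precisely \eqref{eq-W}, and similarly its $\y$-counterpart gives $0\in\partial V_{\x^*,\y^*,\mu}(\y^*)+\mathcal{N}_\Y(\y^*)$.

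Because each $g_{\hat{i}}$ and $h_{\hat{j}}$ is convex and $c$ is convex-concave, $W_{\x^*,\y^*,\mu}$ is convex on $\X$ and $V_{\x^*,\y^*,\mu}$ is convex on $\Y$; hence these subdifferential inclusions are sufficient (not just necessary) for global optimality. So $\x^*\in\arg\min_\X W_{\x^*,\y^*,\mu}$ and $\y^*\in\arg\min_\Y V_{\x^*,\y^*,\mu}$. Now observe that for any $\x\in\X\cap\hat{\X}$ the indices $\hat{i}\in\A^0_i(\x^*_i)$ belong to $[2n]\setminus\A^+(\x^*)$, so the defining constraints of $\hat{\X}$ give $g_{\hat{i}}(\x_i)\le 0$ and hence $g_{\hat{i}}(\x_i)_+=0$. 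Thus $W_{\x^*,\y^*,\mu}(\x)=c(\x,\y^*)$ on $\X\cap\hat{\X}$, while $W_{\x^*,\y^*,\mu}(\x^*)=c(\x^*,\y^*)$ because $g_{\hat{i}}(\x^*_i)=0$ for $\hat{i}\in\A^0_i(\x^*_i)$. The global minimality of $\x^*$ for $W_{\x^*,\y^*,\mu}$ on $\X$ therefore implies $c(\x^*,\y^*)\le c(\x,\y^*)$ for all $\x\in\X\cap\hat{\X}$, and symmetrically $c(\x^*,\y)\le c(\x^*,\y^*)$ for all $\y\in\Y\cap\hat{\Y}$.

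Combining the lower bound property with this (even global) saddle-point property of $c$ on $(\X\cap\hat{\X})\times(\Y\cap\hat{\Y})$, Theorem \ref{theorem2} yields that $(\x^*,\y^*)$ is a $\mu$-strong local saddle point of \eqref{obb-3-1}. The main technical obstacle is the case $g_{\hat{i}}(\x^*_i)=\mu$, where one must invoke Proposition \ref{prop6}(ii) (whose proof in turn relies on Assumption \ref{ass4} and the bound $\mu<\bar{\mu}_1$) to eliminate the ambiguous selector $q^*_{\hat{i}}=2$; without this, the DC decomposition \eqref{eq-r1} would leave a spurious term obstructing the reduction to \eqref{eq-W}.
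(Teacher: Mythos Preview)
Your proposal is correct and follows essentially the same route as the paper's proof: use Proposition~\ref{prop6}(i) for the lower bound, reduce \eqref{eq2.25-2} to \eqref{eq-W} via Proposition~\ref{prop6}(ii) (the paper compresses your four-case analysis into the phrase ``putting forward the results in Proposition~\ref{prop6} to \eqref{eq2.25-2}''), exploit convexity of $W_{\x^*,\y^*,\mu}$ and $V_{\x^*,\y^*,\mu}$ to upgrade to global optimality, restrict to $\hat{\X}$ and $\hat{\Y}$ where the penalty terms vanish, and conclude via Theorem~\ref{theorem2}. Your explicit identification of the case $g_{\hat{i}}(\x^*_i)=\mu$ as the one requiring Proposition~\ref{prop6}(ii) is exactly the point the paper leaves implicit.
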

\begin{proof}
Since $(\x^*,\y^*)$ is a weak d-stationary point of \eqref{obb-3r}, putting forward the results in Proposition \ref{prop6} to \eqref{eq2.25-2}, we have \eqref{eq-W}, which means that $\x^*$ is a global minimizer of $W_{\x^*,\y^*,\mu}$ on $\X$, i.e.
\begin{equation}\label{eq-l1}
c(\x^*,{\y^*})
\leq c(\x,{\y^*})+
\frac{\lambda_1}{\mu}\sum\nolimits_{i=1}^n\sum\nolimits_{\hat{i}\in\A_i^0(\x_i^*)}g_{\hat{i}}(\x_i)_+,\quad\forall\, \x\in\X.\end{equation}
Then, \eqref{eq-l1} means that for any $\x\in\{\x\in\X:g_{\hat{i}}(\x_i)\leq0\,\, \mbox{if}\,\, g_{\hat{i}}(\x_i^*)\leq0,\,\,\forall i\in[n],\,\hat{i}\in\{i,n+i\}\}$, it holds
$c(\x^*,{\y^*})
\leq c(\x,{\y^*})$.
Similarly, Proposition \ref{prop6} together with \eqref{eq-V} implies that $\y^*$ is a global minimizer of $V_{\x^*,\y^*,\mu}$ on $\Y$ and we further have that
$\y^*$ is a maximizer of $c({\x^*},\cdot)$ on $\{\y\in\Y:h_{\hat{j}}(\y)\leq0\,\, \mbox{if}\,\, h_{\hat{j}}(\y_j^*)\leq0,\,\forall j\in[m],\,\hat{j}\in\{j,m+j\}\}$. Thus, from Theorem \ref{theorem2} and recalling Proposition \ref{prop6}-(i),
$(\x^*,\y^*)$ is a $\mu$-strong local saddle point of \eqref{obb-3-1}.
\end{proof}
\begin{remark}
Following the proof of Proposition \ref{prop6}, when $0<\mu<\bar{\mu}_1$, if $\x^*$ and $\y^*$ satisfy
$$\x^*\in \arg\min_{\x\in\X}W_{\x^*,\y^*,\mu}(\x)\quad\mbox{and}\quad \y^*\in\arg\min_{\y\in\Y}V_{\x^*,\y^*,\mu}(\y),$$
 then $(\x^*,\y^*)$ is a $\mu$-strong local saddle point of \eqref{obb-3-1}.
\end{remark}

By \cite[Proposition 17]{Jin-Netrapalli-Jordan}, any local saddle point is a local minimax point. Then, by Theorem \ref{theorem4}, any weak d-stationary point of problem \eqref{obb-3r} is
also a local minimax point of problem \eqref{obb-3-1}.

Since the continuous relaxation functions to the cardinality functions in \eqref{obb-3r} are DC functions and variable separated, the proximal operator of its subtracted convex function can be calculated directly in most cases. Moreover, to solve problem \eqref{obb-3r} with a nonsmooth function $c$ efficiently, we can use a
smoothing approximation of \eqref{obb-3r} as follows
\begin{equation}\label{obb-3rS}
\min_{\x\in\X}\max_{\y\in\Y} \,f^R(\x,\y,\mu,\varepsilon):=\tilde{c}(\x,\y, \varepsilon)+\lambda_1\sum_{i=1}^n {\phi}_{i}^{R}(\x_i,\mu)
-\lambda_2\sum_{j=1}^m{\psi}_{j}^{R}(\y_j,\mu),
\end{equation}
where $\tilde{c}$ is a  smoothing function of $c$ defined by \eqref{smoothing}.
Similar to the expression in \eqref{eqfR} and by Proposition \ref{prop-c}-(ii), for fixed $\mu>0$ and $\varepsilon>0$, $\tilde{f}^R(\x,\y,\mu,\varepsilon)$ in \eqref{ob-cr} is a DC function with respect to $\x$ and $\y$, respectively. Thus, the d-stationary point and weak d-stationary point to \eqref{obb-3rS} can be defined according to Definition \ref{def-d}. By using the gradient consistency \eqref{s2}-\eqref{s3}, we have the following result.
\begin{proposition}\label{prop10}
Let $\tilde{c}:\R^n\times\R^m\times(0,1]\rightarrow\mathbb{R}$ be defined as in \eqref{smoothing}.
If $\{(\x^k,\y^k)\}$ is a sequence of weak d-stationary points of \eqref{obb-3rS} with $\varepsilon_k\downarrow0$, then any accumulation point of $\{(\x^k,\y^k)\}$ is a weak d-stationary point of \eqref{obb-3r}.
\end{proposition}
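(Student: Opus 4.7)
The plan is to pass to the limit in the weak d-stationarity relations defining \eqref{obb-3rS} along a convergent subsequence, using three ingredients: the partial gradient consistency \eqref{s2}--\eqref{s3} for the convolution smoothing $\tilde c$ (Proposition~\ref{prop-c}), the outer semicontinuity of the Clarke subdifferential $\partial\varpi$ and of the normal-cone maps $\mathcal{N}_{\X},\mathcal{N}_{\Y}$, and the finiteness of the label set $\{1,2\}$ that enters the d-stationarity conditions.

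First I would fix a subsequence (still denoted by $k$) such that $(\x^k,\y^k)\to(\x^*,\y^*)$. Since each $(\x^k,\y^k)$ is a weak d-stationary point of \eqref{obb-3rS}, there exist labels $q_{\hat i}^k\in\mathcal{D}(g_{\hat i}(\x^k_i)-\mu)$ and $p_{\hat j}^k\in\mathcal{D}(h_{\hat j}(\y^k_j)-\mu)$ certifying the analogues of \eqref{eq2.25-2} and \eqref{eq2.25} with $\tilde c(\cdot,\cdot,\varepsilon_k)$ in place of $c$. Because the labels take values in the finite set $\{1,2\}$, I can thin further and assume each $q_{\hat i}^k\equiv q_{\hat i}^*$ and $p_{\hat j}^k\equiv p_{\hat j}^*$ is constant in $k$. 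The graph of $\mathcal{D}$ is closed (explicitly $\mathcal{D}(t)=\{1\}$ for $t>0$, $\mathcal{D}(t)=\{2\}$ for $t<0$, $\mathcal{D}(0)=\{1,2\}$), so the continuity of $g$, $h$ yields $q_{\hat i}^*\in\mathcal{D}(g_{\hat i}(\x^*_i)-\mu)$ and $p_{\hat j}^*\in\mathcal{D}(h_{\hat j}(\y^*_j)-\mu)$, which is exactly the admissibility required to call them valid label choices at the limit point.

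Next I would rewrite the $k$-th inclusion as the existence of $a^k\in\partial_{\x}\tilde c(\x^k,\y^k,\varepsilon_k)$, $b_{\hat i}^k\in\partial\varpi(t)_{t=g_{\hat i}(\x^k_i)}\,g'_{\hat i}(\x^k_i)$, and $n^k\in\mathcal{N}_{\X}(\x^k)$ whose weighted sum equals the left-hand side of \eqref{eq2.25-2}; this left-hand side is continuous in $\x^k$ because $\varpi'_1\equiv 1$ and $\varpi'_2\equiv 0$ are constants. Boundedness of $\{a^k\}$ follows from uniform local boundedness of the gradients of the convolution smoothing $\tilde c$ inherited from the Lipschitz constant of $c$ on the compact set $\X\times\Y$; boundedness of $\{b_{\hat i}^k\}$ follows from $\partial\varpi\subseteq[0,1]$ and the continuity of $g'_{\hat i}$; and boundedness of $\{n^k\}$ follows by subtraction. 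Extracting a further subsequence with limits $a^*,b_{\hat i}^*,n^*$, the partial gradient consistency \eqref{s2} gives $a^*\in\partial_{\x}c(\x^*,\y^*)$, the outer semicontinuity of $\partial\varpi$ combined with continuity of $g_{\hat i}$ and $g'_{\hat i}$ gives $b_{\hat i}^*\in\partial\varpi(t)_{t=g_{\hat i}(\x^*_i)}\,g'_{\hat i}(\x^*_i)$, and closedness of the graph of $\mathcal{N}_{\X}$ on the closed convex set $\X$ gives $n^*\in\mathcal{N}_{\X}(\x^*)$. This recovers \eqref{eq2.25-2} at $(\x^*,\y^*)$ with the limit labels $q^*_{\hat i}$. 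The symmetric argument on the $\y$-side using \eqref{s3} yields \eqref{eq2.25} with labels $p^*_{\hat j}$, so $(\x^*,\y^*)$ is a weak d-stationary point of \eqref{obb-3r}. The main obstacle will be to perform these three outer-semicontinuity passages simultaneously while relying only on the \emph{partial} gradient consistency \eqref{s2}--\eqref{s3} (rather than the joint \eqref{eqs-1}), which in turn is where the convex-concave structure of $c$ from Proposition~\ref{prop8} is essential.
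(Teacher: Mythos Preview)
Your proposal is correct and follows precisely the route the paper indicates: the paper does not give a detailed proof of Proposition~\ref{prop10} but simply states that the result follows ``by using the gradient consistency \eqref{s2}--\eqref{s3},'' and your argument is exactly the natural elaboration of that hint. The ingredients you isolate---partial gradient consistency of the convolution smoothing (Proposition~\ref{prop-c}), outer semicontinuity of $\partial\varpi$ and of the normal-cone maps, and finiteness of the label set $\{1,2\}$ to stabilize the choices $q_{\hat i}^*,p_{\hat j}^*$---are the right ones, and the boundedness claims you use to extract convergent subsequences are justified (in particular, $\|\nabla_{\x}\tilde c(\cdot,\cdot,\varepsilon)\|$ inherits the Lipschitz bound of $c$ because convolution with a probability density preserves the Lipschitz constant).
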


\subsection{Density function $\rho$ under Assumption  \ref{ass8}}\label{section5.2}
Section \ref{section4} focuses on the study of \eqref{obb-3-1} with density function $\rho$ satisfying Assumption \ref{ass7}. From Table \ref{table1}, we find that three density functions satisfy Assumption \ref{ass8} and the corresponding continuous relaxation function $r(\cdot,\mu)$ owns the continuous differentiability on $\R_{++}$, which may bring some convenience to its algorithm research when $c$ is smooth.
Thus,
in this subsection, we pay attention to the results of the continuous relaxation to \eqref{obb-3-1} with density function $\rho$ satisfying Assumption \ref{ass8} and consider \eqref{obb-3-1} under the following conditions:
\begin{itemize}
\item [{\rm (i)}] functions $c$, $g_i$, $i\in[2n]$ and $h_j$, $j\in[2m]$ are Lipschitz continuously differentiable;
\item [{\rm (ii)}]  the feasible regions are defined by the box constraints, i.e.
$$
{\X}=\{{\x}\in \R^n:\underline{\u}\leq \x\leq \overline{{\u}}\},\quad {{\Y}}=\{{\y}\in \R^m:\underline{\v}\leq \y\leq \overline{\v}\},$$
where $\underline{\u}$, $\overline{{\u}}$, $\underline{\v}$ and $\overline{\v}$ are defined as in \eqref{eq-cons};
\item [{\rm (iii)}]  Assumption \ref{ass4} holds, in which the conditions of \eqref{ass2-3} and \eqref{ass2-4} can be ignored.
\end{itemize}
In this case, \eqref{ass-X} holds naturally and we will
consider the second order necessary optimality condition of \eqref{obb-3-1}.

To proceed, we first introduce some notations on the existing parameters.
\begin{itemize}
\item By virtue of the Lipschitz continuous differentiability of $c$ on $\X\times\Y$, there exists 
$L_{c,2}$ such that for any $\x\in\X$, $\y\in\Y$, it holds
{$$\sup\{|H_{ii}|,|M_{jj}|:H\in\partial_{\x\x}^2c(\x,\y),M\in\partial_{\y\y}^2c(\x,\y),i\in[n], j\in[m]\}\leq L_{c,2}.$$}
\item Since $g_{{i}}:\R\rightarrow\R$ is Lipschitz continuous differentiable on $\X_i$, and $\X_i$ is compact, there exists $L_{g,2}$ such that
    $$\sup\{|\xi|:\xi\in\partial^2g_{\tilde{{i}}}(\x_i),\x_i\in\X_i,i\in[n],\tilde{i}={i,n+i}\}\leq L_{g,2}.$$
    Similarly, there exists $L_{h,2}$ such that  $$\sup\{|\eta|:\eta\in\partial^2h_{\tilde{{j}}}(\y_j),\y_j\in\Y_j,j\in[m],\tilde{j}={j,m+j}\}\leq L_{h,2}.$$
\item {Under Assumption \ref{ass8}, there exist
$\bar{\rho}_2>0$ and $\rho_0>0$ such that}
\begin{equation}\label{eq-rho1}
\rho(s)\leq \bar{\rho}_2,\quad\forall s\in(0,\alpha)\quad\mbox{and}\quad \lim\nolimits_{t\downarrow0}\rho(t)=\rho_0.
\end{equation}
\item For $\x\in\X$, $\y\in\Y$ and $\delta>0$, denote
$$\mathcal{A}^+_{\delta}(\x)=\{i\in[n]:\,\mbox{$0<g_{{i}}({\x}_{{i}})<\delta$ or $0<g_{n+{i}}({\x}_{{i}})<\delta$}\},$$
$$\mathcal{B}^+_{\delta}(\y)=\{j\in[m]:\,\mbox{$0<h_{{j}}({\y}_{j})<\delta$ or $0<h_{m+{j}}({\y}_{j})<\delta$}\}.$$
\end{itemize}

Proceed to the next step, and let
\begin{equation}\label{eq-mu2}
\bar{\mu}_2=\min\left\{\frac{\tau}{\alpha},\frac{{\lambda}\sigma\underline{\rho}_2}{L_{c,1}},
\frac{{\lambda_1}\check{\rho}_2\sigma^2}{\tau L_{c,2}/\alpha+\lambda_1\bar{\rho}_2L_{g,2}},
\frac{{\lambda_2}\check{\rho}_2\sigma^2}{\tau L_{c,2}/\alpha+\lambda_2\bar{\rho}_2L_{h,2}}
\right\}
\end{equation}
with ${\lambda}=\min\{\lambda_1,\lambda_2\}$. In particular, when $g_{i}$, $h_{j}$ are linear functions and $c(\cdot,\cdot)$ is also linear with respect to $\x$ and $\y$, respectively, then $\bar{\mu}_2=\left\{\frac{\tau}{\alpha},\frac{{\lambda}\sigma\underline{\rho}_2}{L_{c,1}}\right\}$. If we further
choose $\rho$ as in Example \ref{example4.3} or Example \ref{example4.4}, then,
$\bar{\mu}_2=\frac{\tau}{\alpha}.$

In what follows, suppose that $\rho$ satisfies Assumption \ref{ass8} and $0<\mu<\bar{\mu}_2$ with $\bar{\mu}_2$ defined in \eqref{eq-mu2}.
By the Lipschitz continuity of $\rho$ on $\R_{++}$, $r(g_{i}(t),\mu)$ is Lipschitz continuous differentiable on $\{t:g_{i}(t)>0\}$, $\forall i\in[2n]$.
When $g_{i}(t)>0$, by \eqref{sec-r}, the second order generalized derivative of $r(g_{i}(t),\mu)$ with respect to $t$ satisfies
\begin{equation}\label{eq2.26}
\partial_{tt}^2r(g_{i}(t),\mu)\subseteq \tilde{\partial}_{tt}^2r(g_{i}(t),\mu)
:=\frac{\partial\rho(s)_{s=g_{i}(t)/\mu}}{\mu^2}(g_{i}'(t))^2+
\frac{\rho(g_{i}(t)/\mu)}{\mu}\partial^2g_{i}(t),
\end{equation}
where $\rho(g_{i}(t)/\mu)=0$ if $g_{i}(t)\not\in[0,\tau)$. Then, $\tilde{\partial}_{tt}^2r(g_{i}(t),\mu)=\{0\}$ when $g_{i}(t)\not\in[0,\tau)$.
Thus, if $\x^*$ is a local solution of $\min_{\x\in\X}\,f^R(\x,{\y}^*,\mu)$ and $\mathcal{A}^+_{\tau}(\x^*)\neq\emptyset$, by Assumption \ref{ass4} and the second order necessary optimality condition \cite{Jean1984Generalized}, for any ${i}\in\mathcal{A}^+_{\tau}(\x^*)$,
\begin{equation}\label{eq2.27}
\left\{\begin{aligned}
&\mbox{there exists a unique $\hat{i}\in\{i,n+i\}$ such that $0<g_{\hat{i}}({\x}_{{i}}^*)<\tau$ and}\\
&\mbox{there exists a $\omega_i\in[\partial^2_{\x\x}c(\x^*,{\y}^*)]_{ii}+
\lambda_1\tilde{\partial}^2_{tt}r(g_{\hat{i}}(t),\mu)_{t={\x}_{{i}}^*}$ such that $\omega_i\geq0$,}
\end{aligned}\right.\end{equation}
which implies $\x_{{i}}^*\in{\rm int}({\X}_{{i}})$. Similarly,
if $\y^*$ is a local solution of $\max_{\y\in\Y}\,f^R(\x^*,{\y},\mu)$ and $\mathcal{B}^+_{\tau}(\y^*)\neq\emptyset$, for any ${j}\in\mathcal{B}^+_{\tau}(\y^*)$,
\begin{equation}\label{eq2.27-2}
\left\{\begin{aligned}
&\mbox{ there exists a unique $\hat{j}\in\{j,m+j\}$ such that $0<h_{\hat{j}}({\y}_{{j}}^*)<\tau$ and}\\
&\mbox{ there exists a $\varpi_j\in-[\partial^2_{\y\y}c(\x^*,{\y}^*)]_{jj}+
\lambda_2\tilde{\partial}^2_{tt}r(h_{\hat{j}}(t),\mu)_{t={\y}_{j}^*}$ such that $\varpi_j\geq0$.}
\end{aligned}\right.\end{equation}

Thus, inspired by the first and second order necessary conditions to
\begin{equation}
\x^*\in\arg\min_{\x\in\mathcal{X}}f^R(\x,\y^*,\mu)\quad\mbox{and}
\quad\y^*\in\arg\max_{\y\in\mathcal{Y}}f^R(\x^*,\y,\mu),
\end{equation}
we introduce the following definition.
\begin{definition}\label{def-s}
We call $(\x^*,\y^*)\in\X\times\Y$ a {\bf{weak second order stationary point}} of problem \eqref{obb-3r}, if
\begin{equation}\label{first1-3}
\0\in\tilde{{\partial}}_{\x}f^R(\x^*,{\y}^*,\mu)+\mathcal{N}_{\X}(\x^*)\quad\mbox{and}\quad
\0\in-\tilde{{\partial}}_{\y}f^R({\x}^*,\y^*,\mu)+\mathcal{N}_{\Y}(\y^*),
\end{equation}
where $\tilde{\partial}_{\x}f^R(\x^*,{\y}^*,\mu)$ and $\tilde{{\partial}}_{\y}f^R({\x}^*,\y^*,\mu)$ are defined  in \eqref{first1-2} and \eqref{first2-2},
and for any
${i}\in\mathcal{A}^+_{\tau}(\x^*)$, $j\in \mathcal{B}^+_{\tau}(\y^*)$, \eqref{eq2.27} and \eqref{eq2.27-2} hold, respectively.
\end{definition}

It is clear that \eqref{first1-3} and \eqref{eq2.27}-\eqref{eq2.27-2} are weaker than the general first and second order necessary optimality conditions to \eqref{obb-3r}, respectively, so we call it ``weak" stationary point.
\begin{theorem}\label{theorem6}
Suppose problem \eqref{obb-3-1} satisfies Assumption \ref{ass4},
density function $\rho$ satisfies Assumption \ref{ass8} and $0<\mu<\bar{\mu}_2$ with $\bar{\mu}_2$ defined in \eqref{eq-mu2}. Then, the following statements hold.
\begin{itemize}
\item [{\rm (i)}] If $(\x^*,\y^*)$ is a weak second order stationary point of \eqref{obb-3r}, then
\begin{equation}\label{lower2}
g_{\hat{{i}}}(\x^*_i)\not\in(0,\alpha\mu),\,\forall i\in[{n}],\,\hat{i}\in\{i,n+i\};\;
h_{\hat{{j}}}(\y^*_j)\not\in(0,\alpha\mu),\, \forall j\in[{m}],\,\hat{j}\in\{j,m+j\}.
\end{equation}
\item [{\rm (ii)}]
$(\x^*,\y^*)$ is a saddle point of problem \eqref{obb-3-1} if and only if it is a
saddle point of \eqref{obb-3r}.
\item [{\rm (iii)}] $(\x^*,\y^*)$ is an
$\alpha\mu$-strong local saddle point of \eqref{obb-3-1} if it is a local saddle point of \eqref{obb-3r}.
\item [{\rm (iv)}] When functions $g_{i}$, $h_{j}$ are convex for all $i\in[2{n}]$ and $j\in[2{m}]$, $(\x^*,\y^*)$ is an $\alpha\mu$-strong local saddle point of \eqref{obb-3-1} if it is a weak second order stationary point of \eqref{obb-3r}.
\end{itemize}
\end{theorem}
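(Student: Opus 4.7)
The plan is to establish (i) by contradiction, then reduce (ii)--(iii) to Theorem \ref{theorem1} once one-sided versions of the lower bound supply Assumption \ref{ass6}, and finally establish (iv) by adapting the convex-majorant construction of Theorem \ref{theorem4}.

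For (i), I suppose for contradiction that $g_{\hat{i}}(\x_i^*)=s\mu\in(0,\alpha\mu)$ for some $i\in[n]$ and $\hat i\in\{i,n+i\}$. Since $\mu<\tau/\alpha$, Assumption \ref{ass4} forces $\x_i^*\in{\rm int}(\tilde{\X}_i)$, $|g'_{\hat{i}}(\x_i^*)|\geq\sigma$, and the other index $\check i\in\{i,n+i\}\setminus\{\hat i\}$ satisfies $g_{\check{i}}(\x_i^*)\notin[0,\tau]$. Then $\check i$ contributes zero to both $\tilde{\partial}_{\x_i}\phi_i^R$ (since $r(\cdot,\mu)$ is locally constant at $g_{\check{i}}(\x_i^*)$) and to $\tilde{\partial}^2_{tt}r(g_{\check{i}}(\cdot),\mu)$, so the first- and second-order conditions collapse to a single branch indexed by $\hat i$. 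I now split by Assumption \ref{ass8} at $s\in(0,\alpha)$. In the branch $\rho(s)\geq\underline{\rho}_2$, the first-order inclusion of \eqref{first1-3} combined with $[\mathcal{N}_{\X}(\x^*)]_i=0$ forces $\lambda_1\sigma\underline{\rho}_2/\mu\leq L_{c,1}$, contradicting $\mu<\lambda\sigma\underline{\rho}_2/L_{c,1}$. In the branch $\sup\partial\rho(s)\leq-\check{\rho}_2$, the second-order condition \eqref{eq2.27} together with \eqref{eq2.26} produces some $\omega_i\geq 0$ with
\[
\omega_i\leq L_{c,2}-\frac{\lambda_1\check{\rho}_2\sigma^2}{\mu^2}+\frac{\lambda_1\bar{\rho}_2 L_{g,2}}{\mu};
\]
multiplying by $\mu^2$ and using $\mu\leq\tau/\alpha$ rearranges to $\mu\bigl(\tau L_{c,2}/\alpha+\lambda_1\bar{\rho}_2 L_{g,2}\bigr)\geq\lambda_1\check{\rho}_2\sigma^2$, contradicting the definition of $\bar{\mu}_2$ in \eqref{eq-mu2}. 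The $\y$-side of \eqref{lower2} follows by the mirror argument through \eqref{eq2.27-2}.

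Parts (ii)--(iii) then follow from Theorem \ref{theorem1} once Assumption \ref{ass6} is verified. A local minimizer of $f^R(\cdot,\bar{\y},\mu)$ on $\X$ satisfies precisely the $\x$-side first- and second-order necessary conditions used above (the $\y$-side is never touched in that argument), so the two-branch dichotomy produces \eqref{eq-ass6-1}; the mirror argument produces \eqref{eq-ass6-2}. Hence Theorem \ref{theorem1}(i)--(ii) yields the saddle-point equivalence in (ii) and the local-saddle implication in (iii); the bound from (i) automatically promotes the latter to the $\alpha\mu$-strong version.

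For (iv) I would adapt the convex-function argument of Theorem \ref{theorem4}. By (i) each $\hat i\in\{i,n+i\}$ satisfies $g_{\hat i}(\x_i^*)\leq 0$, $g_{\hat i}(\x_i^*)=0$, or $g_{\hat i}(\x_i^*)\geq\alpha\mu$; in each case $\partial_t r(t,\mu)\bigl|_{t=g_{\hat i}(\x_i^*)}\subseteq[0,\rho_0/\mu]$ and collapses to $\{0\}$ unless $g_{\hat i}(\x_i^*)=0$. The first-order inclusion in Definition \ref{def-s} therefore coincides with the optimality condition $0\in\partial W(\x^*)+\mathcal{N}_{\X}(\x^*)$ for the convex function
\[
W(\x)=c(\x,\y^*)+\frac{\lambda_1\rho_0}{\mu}\sum_{i=1}^n\sum_{\hat i\in\{i,n+i\},\,g_{\hat i}(\x_i^*)=0}g_{\hat i}(\x_i)_+,
\]
whose convexity uses convexity of $c(\cdot,\y^*)$ and of each $g_{\hat i}(\cdot)_+$. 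Thus $\x^*$ globally minimizes $W$ on $\X$; on the subset $\X\cap\hat{\X}$ from Theorem \ref{theorem2} the penalty terms vanish, so $\x^*$ minimizes $c(\cdot,\y^*)$ on $\X\cap\hat{\X}$. A symmetric convex $V(\y)$ handles $\y^*$, and Theorem \ref{theorem2} together with the bound from (i) delivers the $\alpha\mu$-strong local saddle point.

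The main obstacle is Case B of (i): the generalized Hessian $\tilde{\partial}^2_{tt}r(g_{\hat i}(\cdot),\mu)$ combines a large negative $\mu^{-2}$ term from $\partial\rho$ with a positive $\mu^{-1}$ term from $\partial^2 g_{\hat i}$, and these must be balanced against the bounded Hessian of $c$; the four thresholds hidden in $\bar{\mu}_2$ are calibrated exactly to force the sign of $\omega_i$ to become strictly negative in this branch.
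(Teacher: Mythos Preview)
Your proposal is correct and follows essentially the same approach as the paper's proof: the contradiction argument with the two-branch dichotomy from Assumption~\ref{ass8} for (i), the reduction of (ii)--(iii) to Theorem~\ref{theorem1} via Assumption~\ref{ass6}, and the convex auxiliary functions $W,V$ with the $\rho_0/\mu$ coefficient for (iv) all match the paper almost line for line. Your rearrangement in Case~B (multiply by $\mu^2$, then bound $L_{c,2}\mu\le L_{c,2}\tau/\alpha$) is algebraically equivalent to the paper's inequality \eqref{eq2.29}, and your explicit observation that a local minimizer of $f^R(\cdot,\bar{\y},\mu)$ satisfies exactly the one-sided first- and second-order conditions needed to rerun the (i) argument is in fact a slight clarification of the paper's terse ``(i) holds, which further implies Assumption~\ref{ass6}.''
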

\begin{proof}
To prove (i), we
argue the results in \eqref{lower2} by contradiction. Suppose
there exists $\tilde{i}\in[n]$ such that $0<g_{\tilde{i}}(\x^*_{\tilde{i}})<\alpha\mu$.

By Assumption \ref{ass4}, since $\alpha\mu<\tau$, then
$\x_{\tilde{i}}^*\in{\rm int}({\X}_{\tilde{i}})$ and $g_{n+\tilde{i}}(\x_{\tilde{i}}^*)\not\in[0,\tau]$, which together with \eqref{eq-rho} implies that
$$
\nabla_t r(t,\mu)_{t=g_{n+\tilde{i}}(\x_{\tilde{i}}^*)}=0\quad\mbox{and}\quad
\nabla_t^2r(t,\mu)_{t=g_{n+\tilde{i}}(\x_{\tilde{i}}^*)}=0.$$

 Next, we obtain the contradiction to $0<g_{\tilde{i}}(\x^*_{\tilde{i}})<\alpha\mu$ from two cases.

 Case 1: $\sup\{a:a\in\partial\rho(g_{\tilde{i}}(\x_{\tilde{i}}^*)/\mu)\}>-\check{\rho}_2$. By Assumption \ref{ass8}, it means that $\rho(g_{\tilde{i}}(\x_{\tilde{i}}^*)/\mu)\geq \underline{\rho}_2$. Similar to the discussion in Proposition \ref{prop2}, by $\mu<\bar{\mu}_2\leq\lambda_1\sigma\underline{\rho_2}/L_{c,1}$, it brings a contradiction.

Case 2: $\sup\{a:a\in\partial\rho(g_{\tilde{i}}(\x_{\tilde{i}}^*)/\mu)\}\leq-\check{\rho}_2$. By \eqref{eq2.26} and \eqref{eq2.27},
there exist $\xi_{\tilde{i}}\in[\partial^2_{\x\x}c(\x^*,{\y}^*)]_{{\tilde{i}}{\tilde{i}}}$, $\eta_{\tilde{i}}\in \partial\rho(t)_{t=g_{\tilde{i}}({\x}_{\tilde{i}}^*)/\mu}$ and $\zeta_{\tilde{i}}\in\partial^2g_{\tilde{i}}(t)_{t={\x}_{\tilde{i}^*}}$ such that
\begin{equation}\label{eq2.28}
\begin{aligned}
\xi_{\tilde{i}}+\lambda_1
\frac{\eta_{\tilde{i}}}{\mu^2}(g_{\tilde{i}}'({\x}_{\tilde{i}}^*))^2+\lambda_1
\frac{\rho(g_{\tilde{i}}({\x}_{\tilde{i}}^*)/\mu)}{\mu}\zeta_{\tilde{i}}\geq0
\end{aligned}.
\end{equation}

Recalling Assumption \ref{ass8} and by $\mu<\frac{\tau}{\alpha}$, an estimation on the left side of \eqref{eq2.28} gives
\begin{equation}\label{eq2.29}
0\leq \frac{L_{c,2}{\tau}}{\alpha}-\lambda_1\check{\rho}_2\sigma^2/\mu+\lambda_1\bar{\rho}_2L_{g,2},
\end{equation}
which contradicts to $\mu<\bar{\mu}_2\leq\frac{\lambda_1\check{\rho}_2\sigma^2}{\tau L_{c,2}/\alpha+\lambda_1\bar{\rho}_2L_{g,2}}$ given in \eqref{eq-mu2}. Therefore,
$g_{{{i}}}(\x^*_{{i}})\not\in(0,\alpha\mu)$, $\forall i\in[n]$.
Similarly, $g_{n+i}(\x^*_{{i}})\not\in(0,\alpha\mu)$, $\forall i\in[n]$, and $h_{{j}}(\y^*_j)\not\in(0,\alpha\mu)$, $h_{m+{j}}(\y^*_j)\not\in(0,\alpha\mu)$, $\forall j\in[m]$. Thus, (i) holds, which further implies Assumption \ref{ass6}. By Theorem \ref{theorem1}, we can obtain the results in (ii) and (iii).

(iv) Suppose $(\x^*,\y^*)$ is a weak second order stationary point of \eqref{obb-3r}. To proceed the proof, we use a slight modification of functions in \eqref{eq-W2} and \eqref{eq-V} as follows
\begin{equation}\label{eq-W22}
W_{\x^*,\y^*,\mu}(\x)=c(\x,\y^*)+
\frac{\lambda_1\rho_0}{\mu}\sum\nolimits_{i=1}^n\sum\nolimits_{\hat{i}\in\A_i^0(\x_i^*)}g_{\hat{i}}(\x_i)_+
,
\end{equation}
\begin{equation}\label{eq-V2}
V_{\x^*,\y^*,\mu}(\y)=-c(\x^*,\y)+\frac{\lambda_2\rho_0}{\mu}
\sum\nolimits_{j=1}^m\sum\nolimits_{\hat{j}\in\mathcal{B}_j^0(\y_j^*)}h_{\hat{j}}(\y_j)_+,
\end{equation}
which are convex on $\X$ and $\Y$, respectively.
By Assumption \ref{ass8} and \eqref{eq-19}, if $g_{\hat{i}}(\x_i^*)\geq\alpha\mu$ or $g_{\hat{i}}(\x_i^*)<0$ for some $i\in[n]$ and $\hat{i}\in\{i,n+i\}$, then
$\partial_{t} r(t,\mu)_{t=g_{\hat{i}}(\x_i)}g_{\hat{i}}'(\x_i)=0$.
From (i), $(\x^*,\y^*)$ satisfies \eqref{lower2}. Thus,
using \eqref{eq-19} again and by \eqref{first1-3}, we have
\begin{equation}\label{eq2.13}
\begin{aligned}
0\in\nabla_{\x}c(\x^*,{\y}^*)
+
\frac{\lambda_1\rho_0}{\mu}\sum\nolimits_{i=1}^n\left(\sum\nolimits_{\hat{i}\in\A_i^0(\x_i^*)}
[0,1]g_{\hat{i}}'({{\x}}_i^*)\right)\e_i+\mathcal{N}_{\X}(\x^*),
\end{aligned}\end{equation}
which implies
$0\in\partial_{\x}{W}_{\x^*,\y^*,\mu}(\x^*)+\mathcal{N}_{\X}(\x^*)$. Thus, $\x^*$ is a global minimizer of ${W}_{\x^*,\y^*,\mu}(\x)$ on $\X$. In what follows, similar to the analysis in Theorem \ref{theorem4}, we get that $(\x^*,\y^*)$ is an $\alpha\mu$-strong local saddle point of \eqref{obb-3-1}.
\end{proof}
\subsection{Continuous relaxations defined by different density functions}
 In this subsection, we use three examples to explain the different properties of the continuous relaxation problems constructed by the density functions that satisfy Assumption \ref{ass7} or \ref{ass8}. In particular, we use the density functions in Examples \ref{example4.1} and  \ref{example4.3} to construct two different continuous relaxation problems, which have different relations with min-max problem \eqref{obb} regarding local saddle points and strong local saddle points.
\begin{itemize}
\item In Example \ref{example5.1}, we show that we can provide a possible larger lower bound to the saddle points of \eqref{obb-3-1} by the analysis on the continuous relaxation models with different density functions.
\item
It is interesting to see in Example \ref{example5.2} that the bounds in
\eqref{eq-ass6-1} and \eqref{eq-ass6-2}
with $0<\mu<\bar{\mu}_1$, $\bar{\mu}_1$ in \eqref{eq-mu} and $\alpha=1$ (given in subsection \ref{section5.1} by the continuous relation model with density function in Example \ref{example4.1})
     is satisfied by the global minimax points of this example, but these bounds with $0<\mu<\bar{\mu}_2$, $\bar{\mu}_2$ in \eqref{eq-mu2}, and $\alpha$ in Assumption \ref{ass8} (given in subsection \ref{section5.1} by the continuous relation model with a density function satisfying Assumption \ref{ass8}) may not hold to the global minimax points.
\item Note that all the functions $r(\cdot,\mu)$ in Examples \ref{example4.1}-\ref{example4.4} can be expressed by DC functions and continuously differentiable on $(0,\alpha\mu)$. Then, when $c$ is continuously differentiable, both the weak d-stationary point and
weak second order stationary point to the continuous model with density functions in Examples \ref{example4.1}-\ref{example4.4} are well-defined.
  In Example \ref{example5.3}, we will show that
 a weak second order stationary point is not necessary to be a weak d-stationary {point} of the continuous relaxation problem with density function in Example \ref{example4.1}. Moreover, a weak d-stationary points is also not necessary to be a weak second order stationary point of the continuous relaxation problem with density function in Example 3.3.
\end{itemize}
\begin{example}\label{example5.1}
Consider
\begin{equation}\label{eq-ex1}
\min_{\x\in[-2,2]}\max_{\y\in[-2,2]}f(\x,\y):=(\x-1)(\y-1)+3\|\x\|_0-3\|\y\|_0.
\end{equation}
Example \ref{example-s} has verified that $(0,0)$ is the unique saddle point of \eqref{eq-ex1}. Assumption \ref{ass4} holds with $\sigma=1$, $\tau=2$, $L_{c,1}=3$ and $L_{c,2}=L_{g,2}=L_{h,2}=0$.

Case 1: Choose the density function $\rho$ in Example \ref{example4.1} to build up its continuous relaxation. Then, $\alpha=1$, $\underline{\rho}=1$ and then $\bar{\mu}_1=1$ in \eqref{eq-mu}. Since we can choose any $\mu$ in $(0,\bar{\mu}_1)$, by Theorem \ref{theorem5}, it gives that the saddle points and global minimax points of \eqref{eq-ex1} satisfy the lower bounds that
\begin{equation}\label{eq-ex5-1}
\mbox{either $\x=0$ or $|\x|\geq\nu$  and either $\y=0$ or $|\y|\geq\nu$}
\end{equation}
with $\nu=1$.

Case 2: Choose the density function $\rho$ in Example \ref{example4.3} with $\alpha=2$ to build up its continuous relaxation. Then, the analysis in subsection \ref{section5.2} gives that $\bar{\mu}_2=1$.  By Theorem \ref{theorem6}, we have that
any saddle point of \eqref{eq-ex1} satisfies the lower bound in \eqref{eq-ex5-1} with $\nu=2$.
\end{example}
\begin{example}\label{example5.2}
Consider
$$
\min_{\x\in[-2,2]}\max_{\y\in[-2,2]}f(\x,\y):=(\x-1)(\y-1)+\|\x\|_0-\|\y\|_0.$$
Example \ref{example-s} shows that $({3}/{2},0)$ and $({3}/{2},2)$ are global minimax points of this problem.

By basic calculation, $\bar{\mu}_1=1/3$ when we define $\rho$ by Example \ref{example4.1}. Then, by Theorem \ref{theorem5},
any global minimax point of this example satisfies \eqref{eq-ex5-1} with any $\nu=\mu<\bar{\mu}_1$.

However, when we define $\rho$ by Example \ref{example4.3} with $\alpha=2$, then $\bar{\mu}_2=1$.
It is obvious that neither of the two global minimax points satisfies \eqref{eq-ex5-1} with
$\nu=\alpha\mu$ when $3/4<\mu<\bar{\mu}_2$.
\end{example}
\begin{example}\label{example5.3}
Consider
\begin{equation}\label{eq-ex5-2}
\min_{\x\in[-2,2]}\max_{\y\in[-2,2]}f(\x,\y):=(\x-1)(1-\y)+\|\x\|_0-\|\y\|_0.
\end{equation}

On the one hand, choose the density function $\rho$ in Example \ref{example4.1} with $\mu=1/4$ to build up its continuous relaxation, where $0<\mu<\bar{\mu}_1=1/3$. For this case, we can verify that $(-1/4,1/4)$ is a weak second order stationary point of its continuous relaxation model, but it is not a weak d-stationary point of it and is also not a local saddle point of \eqref{eq-ex5-2}.

On the other hand, choose the density function $\rho$ in Example \ref{example4.3} with $\alpha=1$ and $\mu=1$ to build up its continuous relaxation, where $0<\mu<\bar{\mu}_2=2$.
For this case, we can easily check that $(1,1)$ is a weak d-stationary point but not a weak second order stationary point of this continuous relaxation model, and not a local saddle point of \eqref{eq-ex5-2}.
\end{example}

At the end of this subsection, we summarize the relations between min-max problem \eqref{obb-3-1} and its continuous relaxation problem \eqref{obb-3r} in Fig. \ref{fig1}.
\begin{figure}
\centering
\includegraphics[width=5.2in,height=2.5in]{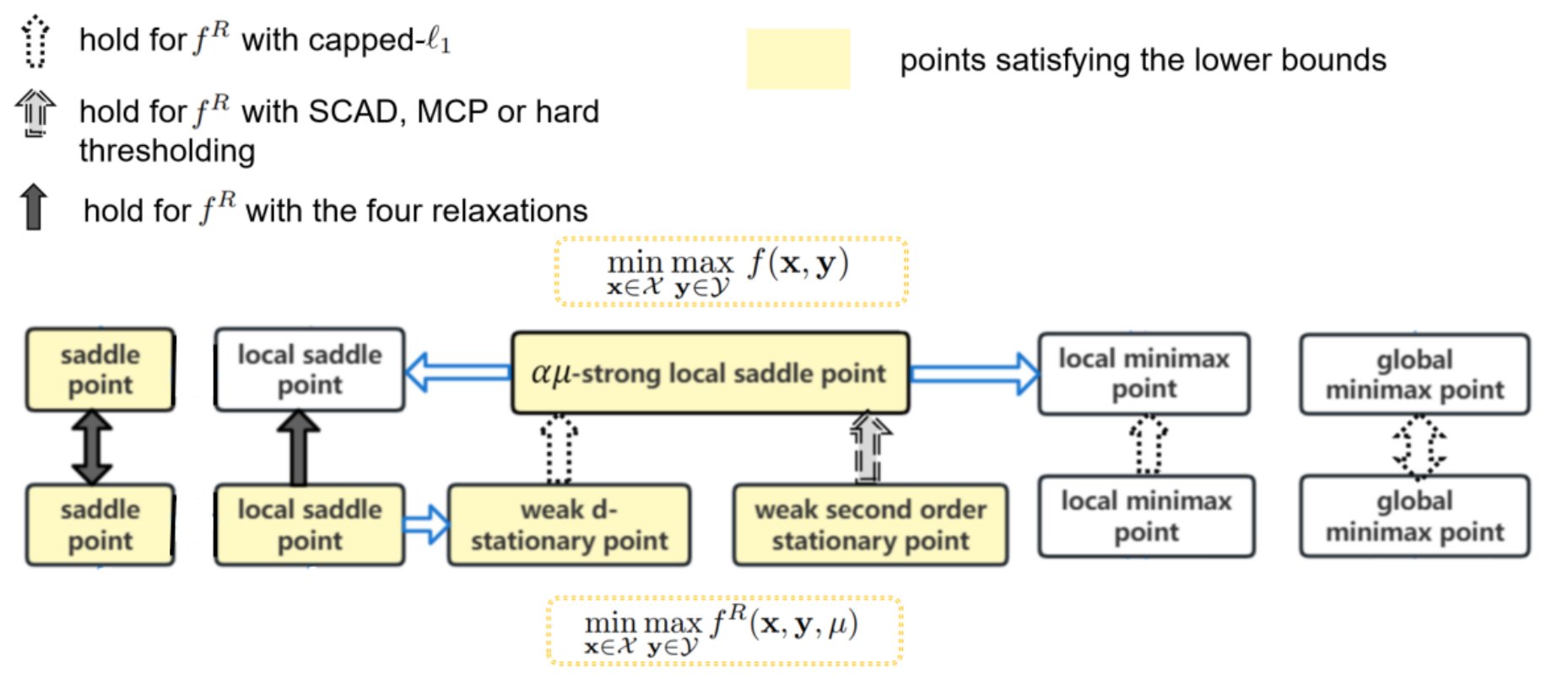}
\caption{Relations between problems \eqref{obb-3-1} and \eqref{obb-3r} with different relaxations}\label{fig1}
\end{figure}

\section{Applications}\label{section6}
In this section, we use three examples to explain the motivation and theoretical results of this paper. Moreover, we present numerical results for the third example.
\subsection{Distributionally robust sparse convex regression}\label{sec6.1}
The sparse convex regression problem
$$
\min_{\x\in \X} \mathbb{E}[\varphi(\x;\c_\xi, d_\xi)] +\lambda_1\|\x\|_0
$$
has wide applications in data science, where $\X=\{\x\in \R^n:  \underline{\u} \le \x\le \overline{\u}\}$ with $\underline{\u} < \overline{\u}$, $(\c_\xi, d_\xi)\in \R^n\times \R$ represents a random data set of interest, $\varphi(\cdot; \c_\xi, d_\xi)
:\R^n\to \R$ is a convex loss function and $\mathbb{E}$ is the expectation. Widely used convex loss functions include the censored function $(\max(\c_\xi^\top \x,0)-d_\xi)^2$ and the
$\ell_1$ function $|\c_\xi^\top \x-d_\xi|$,  which
are nonsmooth functions. Then, the distributionally robust sparse convex regression problem can be expressed by
\begin{equation}\label{censored1}
\min_{\x\in \X} \max_{\y\in \hat{\Y}} \,  \sum_{i=1}^m \y_i\varphi_i(\x)+\lambda_1\|\x\|_0
\end{equation}
with $\varphi_i(\x):=\varphi(\x;\c_i,d_i)$, a set of $m$ samples $\{\c_i,d_i\}_{i=1}^m$ and the approximation of the ambiguity set
$\hat{\Y}=\{ \y \in \R^{m} : \y\ge 0, \,\, \e^\top \y=1, \,\, \|\textbf{A}\y -\b\|\le {\delta} \}.$
Here $(\textbf{A}, \b, {\delta})\in \R^{k\times m}\times\R^k\times \R_+$ describes the approximated ambiguity set in a general moment form.

Taking account of the constraint on $\y$, the following penalty form
\begin{equation}\label{censored}
\min_{\x\in \X} \max_{\y\in \Y} \,  \underbrace{\sum_{i=1}^m \y_i\varphi_i(\x)-\beta\max\{\|\textbf{A}\y -\b\|^2-{\delta}^2,0\}}_{c(\x,\y)}+\lambda_1\|\x\|_0
\end{equation}
for \eqref{censored1} is promising, where $\beta>0$ is a penalty parameter and $\Y=\{ \y \in \R^{m} : \y\ge 0, \,\, \e^\top \y=1\}$. In \eqref{censored}, $c(\x,\y)$ is nonsmooth with respect to both $\x$ and $\y$. However, thanks to the method in subsection \ref{sec-smoothing}, a smoothing function to $c$ can be easily constructed with the properties in \eqref{s2} and \eqref{s3}. For example, if $\varphi_i(\x)=|\c_i^\top \x-d_i|$, then we can set
$$
\tilde{c}(\x,\y,\varepsilon)=\sum_{i=1}^m \y_i{\theta}(\c_i^\top \x-d_i,{\varepsilon})-\beta{\phi}(\|\textbf{A}\y -\b\|^2-\sigma^2,\varepsilon),
$$
where ${\phi}(s,\varepsilon)$ is a smoothing function of the plus function $s_+$ and $\theta(s,\varepsilon)$ is a smoothing function of the absolute value function $|s|$. Note that ${\phi}(s,\varepsilon)$
can be defined by any one of the following formulations:
$$\phi(s,\varepsilon)=s+\varepsilon\ln(1+e^{-\frac{s}{\varepsilon}}),
\quad \quad
\phi(s,\varepsilon)=\frac{1}{2}(s+\sqrt{s^2+4\varepsilon^2}),
$$
$$
\phi(s,\varepsilon)=\left\{\begin{aligned}
&s_+ &&\mbox{if}\, |s|>\varepsilon\\
&\frac{(s+\varepsilon)^2}{4\varepsilon} &&\mbox{if}\, |s|\leq \varepsilon,
\end{aligned}\right.\quad
\phi(s,{\varepsilon})=\left\{\begin{aligned}
&s +\frac{\varepsilon}{2}e^{-\frac{s}{\varepsilon}} &&\mbox{if}\, s> 0\\
&\frac{\varepsilon}{2}e^{\frac{s}{\varepsilon}} &&\mbox{if}\, s \leq 0,
\end{aligned}\right.
$$
and $\theta(s,\varepsilon)$ can be given by $\theta(s,\varepsilon)=\phi(s,\varepsilon)+\phi(-s,\varepsilon)$.
From Definition \ref{defn2}, it is clear that $\tilde{c}$ is a smoothing convex-concave function of $c$.
Moreover, by Proposition \ref{prop8}, it satisfies \eqref{s2} and \eqref{s3}.
\subsection{Robust bond portfolio construction}
We consider a portfolio of $n$ bonds with quantities $\x\in {\cal X}\subseteq \mathbb{R}^n_+$ and time periods $t=1,\ldots, T$, where the set ${\cal X}=\{\x \in \mathbb{R}^{n} : \underline{\u} \le \x\le \overline{\u}\}$ gives a range of possible quantities for each bond.  Let $\alpha_{i,t}$ denote the cash flow from bond $i$ in period $t$, which includes the coupon payments and the payment of the face value at maturity.

Let $p\in \mathbb{R}^n_+$ denote the price of the bonds with
$$p_i=\sum^T_{t=1}\alpha_{i,t}\text{exp}(-t(u_t+s_i)), \quad i=1,\ldots, n,$$
where $s_i\ge 0$ is the spread for bond $i$ and $u_t$ is the yield curve at time $t$.
The portfolio value is given by $p^\top \x$.
Let $\phi$ be a smooth convex nominal function that may include tracking error against a benchmark, a risk term and a transaction cost term.

Let $\y=(u^\top, s^\top)^\top \in \mathbb{R}^{n+T}$.  The set ${\cal Y}=\{\y \in \mathbb{R}^{n+T} : \underline{\v} \le \y\le \overline{\v}\}$ gives a range of possible values for each point in the yield curve and for each spread.

A version of the robust bond portfolio construction model in \cite{Boyd} is the following convex-concave saddle point problem
$$
\min_{\x\in {\cal X}}\max_{\y\in {\cal Y}} c(\x,\y):= \phi(\x)-\lambda \sum^n_{i=1}\sum^T_{t=1}\x_i\alpha_{i,t}\exp(-t(\y_t+\y_{T+i})) -\beta \|\textbf{A}\y-\b\|_1,$$
where $\|\textbf{A}\y-\b\|_1$ describes the uncertainties in yield curves and spreads, and
$c(\x,\y)$ is a nonsmooth function with respect to $\y$.
A robust bond portfolio construction  with sparse selection of bonds is as follows
 \begin{equation}\label{model1}
\min_{\x\in {\cal X}}\max_{\y\in {\cal Y}} c(\x,\y) +\lambda_1 \|\x\|_0.
\end{equation}

Problem \eqref{model1} is a nonsmooth convex-concave saddle point problem with cardinality penalty $\|\x\|_0$, where
$\X$ is a convex set with int$(\X)\neq\emptyset$, and $\Y$ is a convex set. Note that the assumption int$(\Y)\neq\emptyset$ in (\ref{ass-X}) and Assumption \ref{ass4}-(ii) can be removed, since \eqref{model1} does not have a cardinality function of $\y$. A smoothing function of $\|\textbf{A}\y-\b\|_1$ in the function $c$ can be constructed by $\theta(s,\varepsilon)$ in subsection \ref{sec6.1}.
\subsection{Sparse convex-concave logistic regression min-max problems}\label{sec6.2}
Motivated by the unconstrained convex-concave logistic regression saddle point problem in \cite{Brian}, we consider the following problem
\begin{equation}\label{Brian}
\min_{\x\in\X}\max_{\y\in \Y} \, c(\x,\y):=\sum_{k=1}^N\log(1+e^{-\alpha_k\a_k^\top\x})
+\x^\top {\bf{A}}\y-  \sum_{k=1}^N\log(1+e^{-\beta_k\b_k^\top\y}),
\end{equation}
where $\X=\{\x \in \mathbb{R}^n : {-}\textbf{e}\leq\x\leq\textbf{e}\}$, $\Y=\{\y\in \mathbb{R}^n : { -}\textbf{e}\leq\y\leq{ }\textbf{e}\}$, $\a_k\in \{0,1\}^n, \b_k\in \{0,1\}^m,$ ${\bf{A}}\in\{0,1\}^{n\times m}$ and $\alpha_k, \beta_k \in \{-1,1\}$, for all $k\in [N]$. To find a sparse solution, we consider the following sparse min-max model
\begin{equation}\label{Brian1}
\min_{\x\in\X}\max_{\y\in \Y} \, c(\x,\y) +\lambda_1 \|\x\|_0
-\lambda_2\|\y\|_0.
\end{equation}
It is clear that $c(\cdot,\cdot)$ is a smooth convex-concave function and Assumption \ref{ass4} holds for \eqref{Brian1} with $\tau=\sigma=1$. It has
\begin{align*}
  \nabla_{\x}c(\x,\y)=\sum_{k=1}^N \frac{-\alpha_ke^{-\alpha_k\a_k^\top\x}}{1+e^{-\alpha_k\a_k^\top\x}}\a_k+{\bf{A}}\y,\;
  \nabla_{\y}c(\x,\y)=-\sum_{k=1}^N \frac{-\beta_ke^{-\beta_k\b_k^\top\y}}{1+e^{-\beta_k\b_k^\top\y}}\b_k+{\bf{A}}^\top\x.
\end{align*}
By simple calculation, we can set $L_{c,1}$ defined in \eqref{eq-c} as
\begin{align}\label{Lip}
L_{c,1}=\max\left\{\|\a\|_{\infty}+\|{\bf{A}}\|_{\infty},
\|\b\|_{\infty}+\|{\bf{A}}^\top\|_{\infty}\right\},
\end{align}
where $\a=(\a_1,\ldots,\a_N)$ and $\b=(\b_1,\ldots,\b_N)$.

 If we choose the density function in Example \ref{example4.1} to construct the continuous relaxation function $f^R(\x,\y,\mu)$, then $\alpha=\underline{\rho}=1$.
From the weak d-stationary point defined in Definition \ref{def-d} and by Theorem \ref{theorem4}, if $(\x^*,\y^*)\in\X\times\Y$ is a weak d-stationary point of $\min_{\x\in\X}\max_{\y\in\Y}f^R(\x,\y,\mu)$, then $(\x^*,\y^*)$ is a $\mu$-strong local saddle point of \eqref{Brian1}, that is
\begin{equation}\label{eq-ex-c}
\left\{\begin{aligned}
&|\x_i^*|\not\in(0,\mu),\;|\y_j^*|\not\in(0,\mu),\,&&\forall i\in[n],\;j\in[m]\\
&0\in[\nabla_{\x}c(\x^*,\y^*)]_i+N_{\X_i}(\x_i^*)\;\mbox{if $|\x^*_i|\geq\mu$}, &&\forall i\in[n]\\
&0\in-[\nabla_{\y}c(\x^*,\y^*)]_j+N_{\Y_j}(\y_j^*)\;\mbox{if $|\y^*_j|\geq\mu$}, &&\forall j\in[m].
\end{aligned}\right.
\end{equation}

There are many interesting algorithms for nonconvex-nonconcave min-max problems \cite{Rockafellar1976,Attouch-Wets,Nemirovski2004,Jin-Netrapalli-Jordan2,Grimmer,Xu-Zhang,Niao,Bot2023,Marc}. To illustrate our theoretical results, we solve \eqref{Brian} by the Proximal Gradient Descent Ascent (PGDA) algorithm proposed in \cite{Marc},
 i.e.
\begin{equation}\begin{aligned}
&\x^{k+1}=\arg\min_{\x\in\X}Q(\x,\x^k;\y^k,\y^k)\\
&\y^{k+1}=\arg\max_{\y\in\Y}Q(\x^{k+1},\x^{k+1};\y,\y^k),
\end{aligned}\tag{PGDA}
\end{equation}
where
$$Q(\x,\tilde{\x};\y,\tilde{\y}):=
\langle\nabla_{\x}c(\tilde{\x},\tilde{\y}),\x-\tilde{\x}\rangle
+\langle\nabla_{\y}c(\tilde{\x},\tilde{\y}),\y-\tilde{\y}\rangle+
\frac{1}{2}\gamma\|\x-\tilde{\x}\|^2-\frac{1}{2}\gamma\|\y-\tilde{\y}\|^2,$$
and $\gamma\geq\max\left\{\|\a\|_{\infty},\,\|\b\|_{\infty}\right\}\geq\max_{\x\in\X,\y\in\Y,i\in[n],j\in[m]}\{|[\nabla_{\x\x}^2c(\x,\y)]_{ii}|,
|[\nabla_{\y\y}^2c(\x,\y)]_{jj}|\}$.

To find a weak d-stationary point of the continuous relaxation of \eqref{Brian1}, inspired by the algorithm in \cite{BC-SINUM}, we define
\begin{equation}
Q_{d_{\tilde{\x}},d_{\tilde{\y}}}(\x,\tilde{\x};\y,\tilde{\y};\mu)
:={Q(\x,\tilde{\x};\y,\tilde{\y})}+\lambda_1\sum_{i=1}^n
\Phi^{d_{\tilde{\x}_i}}(\x_i,\mu)-\lambda_2\sum_{j=1}^m
\Phi^{d_{\tilde{\y}_j}}(\y_j,\mu),
\end{equation}
where $\Phi^{d_{\tilde{s}}}(s,\mu)=\left\{\begin{aligned}
&\frac{1}{\mu}|s|&&\mbox{if $|\tilde{s}|<\mu$}\\
&0&&\mbox{if $|\tilde{s}|\geq\mu$}.
\end{aligned}\right.$
Notice that for fixed $\tilde{\x}\in\X$, $\tilde{\y}\in\Y$ and $\mu>0$, $Q_{d_{\tilde{\x}},d_{\tilde{\y}}}(\cdot,\tilde{\x};\cdot,\tilde{\y};\mu)$ is convex-concave.

Combining the PGDA with the alternating index at ${\x}^k$ and $\y^k$ in \cite{BC-SINUM}, we propose the Alternating Proximal Gradient Descent Ascent
(APGDA) algorithm, i.e.
\begin{equation}\label{eq-a-2}
\begin{aligned}
  &\x^{k+1}=\arg\min_{\x\in\X}Q_{d_{{\x}^k},d_{{\y}^k}}(\x,{\x}^k;\y^k,{\y}^k;\mu), \\
  &\y^{k+1}=\arg\max_{\y\in\Y}Q_{d_{{\x}^{k+1}},d_{{\y}^k}}(\x^{k+1},{\x}^k;\y,{\y}^k;\mu).
\end{aligned}\tag{APGDA}\end{equation}
Although the two steps in  APGDA algorithm can be considered as a generalization of Step 2 in Algorithm 3.1 in \cite{BC-SINUM} for solving two strongly convex minimization problems:  $\min_{\x\in\X}f(\x,\y^k)$ and $\min_{\y\in\Y}-f(\x^{k+1},\y)$,
the convergence analysis of APGDA is not trivial. We will study the convergence of APGDA in our future research. Here we only give a remark on PGDA and APGDA.
\begin{remark} By the structure of PGDA and APGDA, we can find the following conclusions directly.
\begin{itemize}
\item Any limit point of $(\x^k,\y^k)$ generated by PGDA is a saddle point of \eqref{Brian}.
\item Any limit point of $(\x^k,\y^k)$ generated by APGDA is a $\mu$-strong local saddle point of \eqref{Brian1}.
\end{itemize}

To determine whether the limit point is a saddle point of \eqref{Brian} or a $\mu$-strong local saddle point of \eqref{Brian1}, by the normal cones of $\X=\{\x:{-}\textbf{e}\leq\x\leq\textbf{e}\}$ and $\Y=\{\y:{ -}\textbf{e}\leq\y\leq{ }\textbf{e}\}$, we define the following evaluation functions
$$R_i(\x)=\left\{
\begin{aligned}
& ([\nabla_{\x}c(\x,\y)]_i)_+&&\mbox{if $\x_i=1$}\\
& (-[\nabla_{\x}c(\x,\y)]_i)_+&&\mbox{if $\x_i=-1$}\\
& |[-\nabla_{\x}c(\x,\y)]_i|&&\mbox{otherwise,}
\end{aligned}
\right.\quad
S_j(\y)=\left\{
\begin{aligned}
& (-[\nabla_{\y}c(\x,\y)]_j)_+&&\mbox{if $\y_j=1$}\\
& ([\nabla_{\y}c(\x,\y)]_j)_+&&\mbox{if $\y_j=-1$}\\
& |[-\nabla_{\y}c(\x,\y)]_j|&&\mbox{otherwise}.
\end{aligned}
\right.
$$
It is clear that $R_i(\x)\geq0$, $\forall i\in[n]$ and $S_j(\y)\geq0$, $\forall j\in[m]$. For $\bar{\x}\in\X$ and $\bar{\y}\in\Y$, it holds
\begin{itemize}
\item $p(\bar{\x}):=\sum_{i=1}^nR_i(\bar{\x})=0$ and $q(\bar{\y}):=\sum_{j=1}^mS_j(\bar{\y})=0$ if and only if $(\bar{\x},\bar{\y})$ is a saddle point of \eqref{Brian}.
\item $\tilde{p}(\bar{\x}):=\sum_{i:\bar{\x}_i\neq0}(R_i(\bar{\x})+\max\{\mu-|\bar{\x}_i|,0\})=0$ and $\tilde{q}(\bar{\y}):=\sum_{j:\bar{\y}_j\neq0}(S_j(\bar{\y})+\max\{\mu-|\bar{\y}_j|,0\})=0$ if and only if $(\bar{\x},\bar{\y})$ is a $\mu$-strong local saddle point of \eqref{Brian1}.
\end{itemize}
\end{remark}

To demonstrate the sequence convergence graphically, we conduct a simple test experiment. We choose $n =20$, $m=30$, $N=50$, $\lambda_1=\lambda_2=1$, randomly  generate ${\bf{A}}\in\{0,1\}^{n\times m}$, and for $k\in[N]$ randomly generate  $\a_k\in\{0,1\}^n$, $\b_k\in\{0,1\}^m$ with $2$ nonzero elements, $\alpha_k, \beta_k\in \{-1,1\}$. We compute the constant $L_{c,1}$ as in \eqref{Lip} and obtain $\mu=0.0323<\bar{\mu}_1$,
where $\bar{\mu}_1=\min\left\{1,1/{L_{c,1}}
\right\}$ is defined as in \eqref{eq-mu}.  We choose an initial point $\x^0=0.2{\bf e},\y^0=0.2{\bf e}$ for running both PGDA and APGDA.

 Fig. \ref{fig6} plots the convergence of $\x^k$, $\y^k$, $p(\x^k)$,
$q(\y^k)$ generated by PGDA and convergence of $\x^k$, $\y^k$, $\tilde{p}(\x^k)$,
$\tilde{q}(\y^k)$ generated by APGDA.
From Fig. \ref{fig6}, we find that the limit point of the sequence $(\x^k,\y^k)$ generated by PGDA has no zero element and some elements of it do not satisfy the lower bounds in \eqref{eq-ex-c}. However,
more than half elements of the limit point of $(\x^k,\y^k)$ generated by APGDA are zero, and all elements of it satisfy the lower bounds in \eqref{eq-ex-c}.
This is consistent with the theoretical results and shows the superiority of  \eqref{Brian1} in finding a sparse solution. Moreover,
in Fig. \ref{fig6}-(e), from the convergence of $p({\x}^k)$ and $q({\y}^k)$ on $(\x^k,\y^k)$ generated by PGDA, we confirm that the limit point of $(\x^k,\y^k)$ is a saddle point of \eqref{Brian}, while Fig. \ref{fig6}-(f) shows the convergence of $\tilde{p}({\x}^k)$ and $\tilde{q}({\y}^k)$ on $(\x^k,\y^k)$ generated by APGDA, which confirms that the limit point of this sequence is a $\mu$-strong local saddle point of \eqref{Brian1}.
\begin{figure}[htp]
\begin{center}
  \subfigure[$\x^k$ by PGDA]{
    \includegraphics[width=2in,height=1.8in]{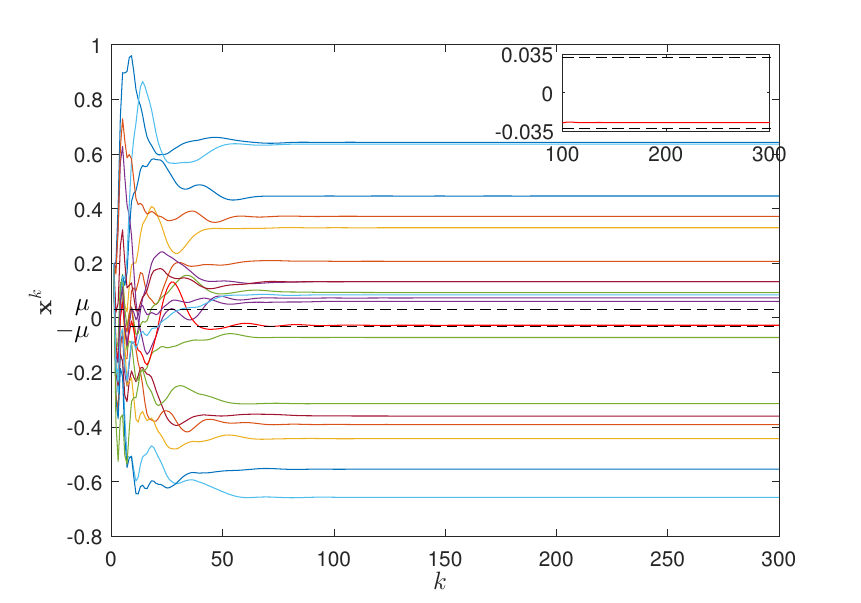}}
     \subfigure[$\x^k$ by APGDA]{
    \includegraphics[width=2in,height=1.8in]{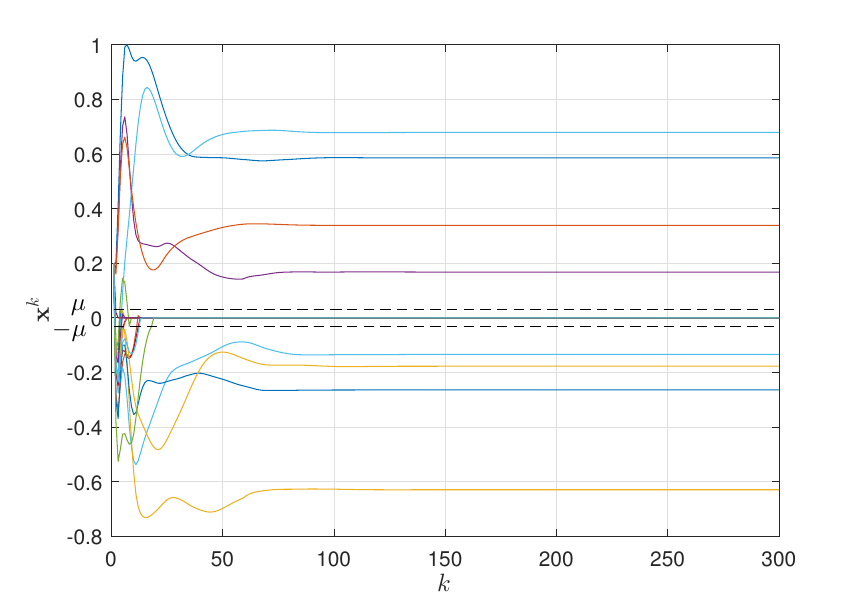}}
    \subfigure[$\y^k$ by PGDA]{
    \includegraphics[width=2in,height=1.8in]{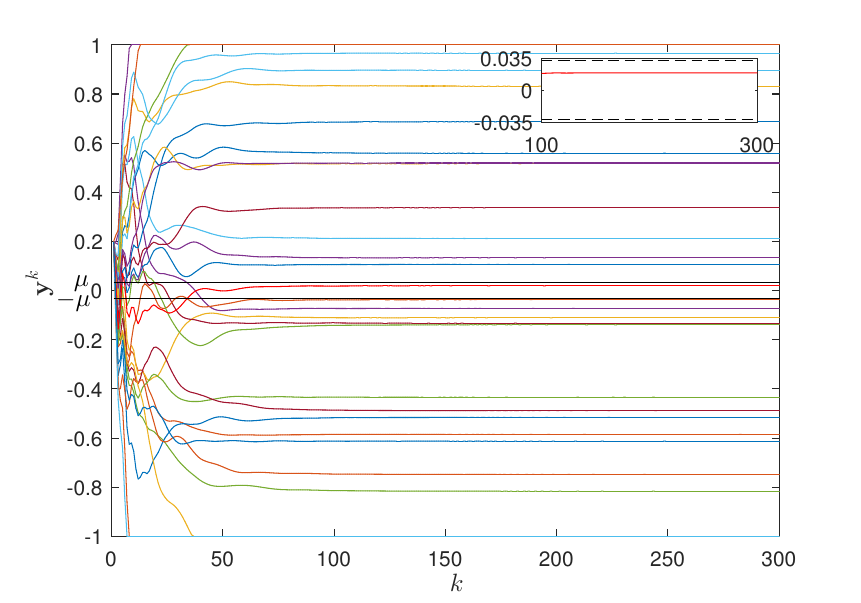}}
     \subfigure[$\y^k$ by APGDA]{
    \includegraphics[width=2in,height=1.8in]{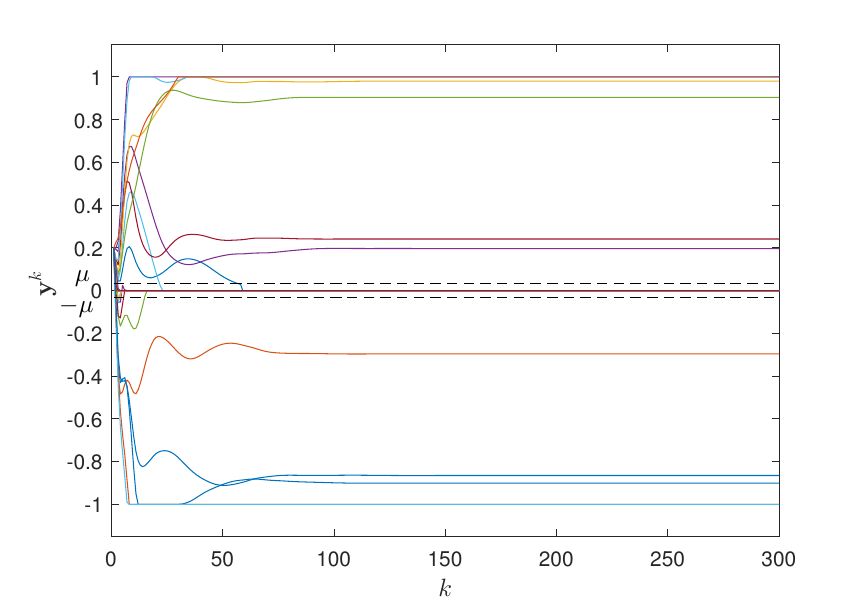}}
  \subfigure[$p({\x}^k)$ and $q({\y}^k)$ by PGDA]{
    \includegraphics[width=2in,height=1.8in]{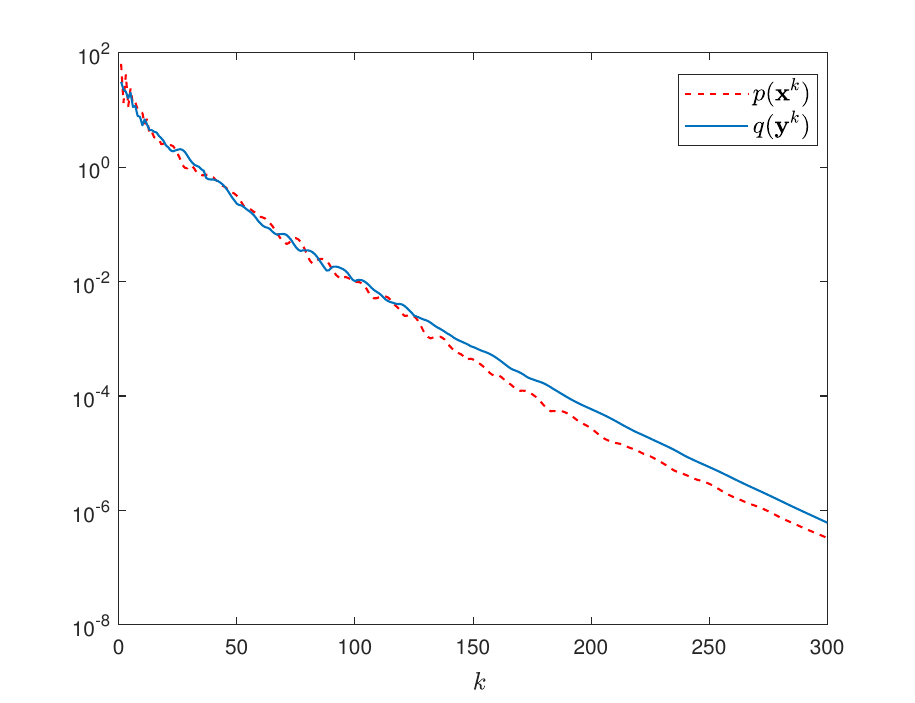}}
  \subfigure[$\tilde{p}({\x}^k)$ and $\tilde{q}({\y}^k)$ by APGDA]{
    \includegraphics[width=2in,height=1.8in]{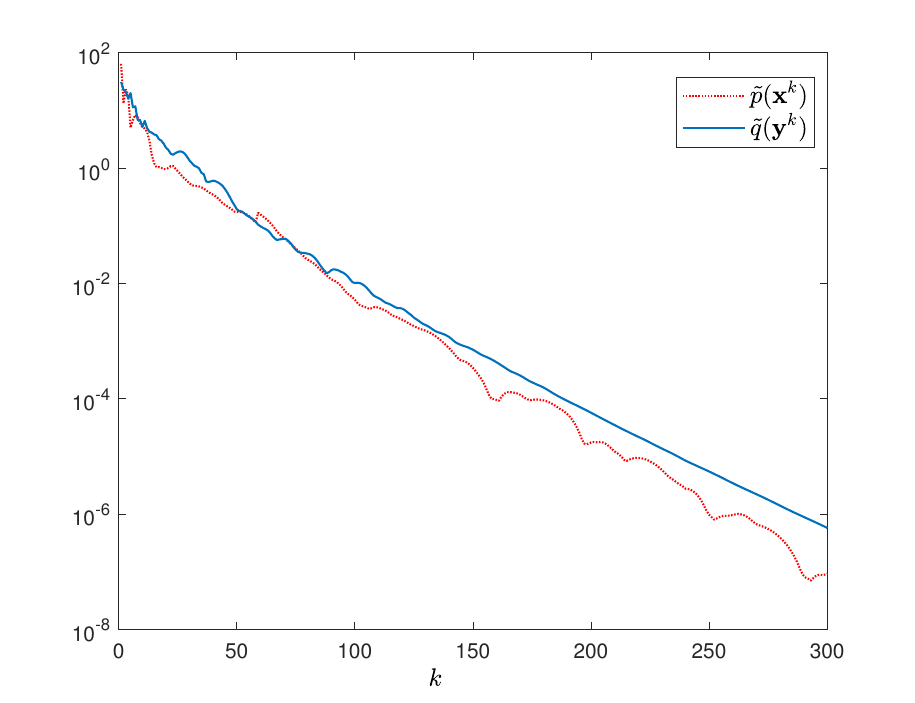}}
  \caption{Convergence of $\x^k$, $\y^k$, $p({\x}^k)$ and $q({\y}^k)$ generated by PGDA and convergence of  $\x^k$, $\y^k$, $\tilde{p}({\x}^k)$ and $\tilde{q}({\y}^k)$ generated  by APGDA }\label{fig6}
\end{center}\end{figure}

\section{Conclusion}
In this paper,  we prove the existence of local saddle points and global minimax points of problem \eqref{obb} and define a class of strong local saddle points of it. To construct interesting continuous relaxations to \eqref{obb} based on the convolution,
 we introduce two classes of  density functions which satisfy Assumptions \ref{ass7} and \ref{ass8}, respectively. The induced continuous relaxations include the capped-$\ell_1$, scaled SCAD, scaled MCP, hard thresholding functions as special cases.  Moreover, we establish the relations between problem \eqref{obb} and its continuous relaxation (\ref{ob-r}) regarding their saddle points, local saddle points  and global minimax points by using the lower bound properties of $g(\x)$ and $h(\y)$ in \eqref{eq-ass6-1}-\eqref{eq-ass6-2} at the local saddle points and global minimax points of the continuous relaxation problem.  Moreover, we define the weak d-stationary points and weak second order stationary points of problem (\ref{obb-3-1}), which are necessary conditions for the local saddle points of its continuous relaxation problem \eqref{obb-3r}, while sufficient conditions for the strong local saddle points of (\ref{obb-3-1}). In addition, we study smoothing approximation of \eqref{obb-3r} by using a smoothing convex-concave function of nonsmooth $c$ and prove that any accumulation point of weak d-stationary points of the smoothing approximation problem is a weak d-stationary point of  \eqref{obb-3r} as the smoothing parameter goes to zero.


\end{document}